\newtheorem{thm}{Theorem}[section]
\newtheorem{lem}[thm]{Lemma}
\newtheorem{prop}[thm]{Proposition}
\newtheorem{cor}[thm]{Corollary}
\theoremstyle{definition}
\newtheorem{df}[thm]{Definition}
\newtheorem{rem}[thm]{Remark}
\numberwithin{equation}{section}
\renewcommand{\phi}{\varphi}
\newcommand{\ep}{\varepsilon}
\newcommand{\Ad}{\operatorname{Ad}}
\newcommand{\Aff}{\operatorname{Aff}}
\newcommand{\Bott}{\operatorname{Bott}}
\newcommand{\Ext}{\operatorname{Ext}}
\newcommand{\diag}{\operatorname{diag}}
\newcommand{\Hom}{\operatorname{Hom}}
\newcommand{\id}{\operatorname{id}}
\newcommand{\Ima}{\operatorname{Im}}
\newcommand{\Ker}{\operatorname{Ker}}
\newcommand{\Lip}{\operatorname{Lip}}
\newcommand{\Pext}{\operatorname{Pext}}
\newcommand{\Tor}{\operatorname{Tor}}
\newcommand{\Tr}{\operatorname{Tr}}
\newcommand{\tr}{\operatorname{tr}}
\newcommand{\N}{\mathbb{N}}
\newcommand{\Z}{\mathbb{Z}}
\newcommand{\Q}{\mathbb{Q}}
\newcommand{\R}{\mathbb{R}}
\newcommand{\C}{\mathbb{C}}
\newcommand{\T}{\mathbb{T}}
\newcommand{\K}{\mathbb{K}}
\title{Classification of homomorphisms \\
into simple $\mathcal{Z}$-stable $C^*$-algebras}
\author{Hiroki Matui \\
Graduate School of Science \\
Chiba University \\
Inage-ku, Chiba 263-8522, Japan}
\date{}
\begin{document}
\maketitle

\begin{abstract}
We classify unital monomorphisms 
into certain simple $\mathcal{Z}$-stable $C^*$-algebras 
up to approximate unitary equivalence. 
The domain algebra $C$ is allowed to be 
any unital separable commutative $C^*$-algebra, 
or any unital simple separable nuclear $\mathcal{Z}$-stable $C^*$-algebra 
satisfying the UCT such that $C\otimes B$ is of tracial rank zero 
for a UHF algebra $B$. 
The target algebra $A$ is allowed to be 
any unital simple separable $\mathcal{Z}$-stable $C^*$-algebra 
such that $A\otimes B$ has tracial rank zero 
for a UHF algebra $B$, 
or any unital simple separable exact $\mathcal{Z}$-stable $C^*$-algebra 
whose projections separate traces and 
whose extremal traces are finitely many. 
\end{abstract}

\section{Introduction}

Consider unital monomorphisms $\phi,\psi:C\to A$ 
from a $C^*$-algebra $C$ to a simple $C^*$-algebra $A$. 
In this paper we study the problem to determine 
when $\phi$ and $\psi$ are approximately unitarily equivalent, 
i.e. when there exists a sequence of unitaries $(u_n)_n$ in $A$ 
such that $\phi(x)=\lim u_n\psi(x)u_n^*$ holds for any $x\in C$. 
This problem is known to be closely related to the classification problem 
for the simple $C^*$-algebra $A$. 
In the recent progress of Elliott's program to classify nuclear 
$C^*$-algebras via $K$-theoretic invariants 
(see \cite{Rordamtext} for an introduction to this subject), 
the Jiang-Su algebra plays a central role. 
The Jiang-Su algebra $\mathcal{Z}$, 
which was introduced by X. Jiang and H. Su in \cite{JS}, 
is a unital, simple, separable, infinite dimensional, 
stably finite and nuclear $C^*$-algebra $KK$-equivalent to $\C$. 
A $C^*$-algebra $A$ is said to be $\mathcal{Z}$-stable 
if $A\otimes\mathcal{Z}$ is isomorphic to $A$. 
$\mathcal{Z}$-stability implies many nice properties 
from the point of view of classification theory. 
Among other things, 
if $A$ is a unital separable simple $\mathcal{Z}$-stable $C^*$-algebra, 
then $A$ is either purely infinite or stably finite. 
If, in addition, $A$ is stably finite, 
then $A$ must have stable rank one and 
weakly unperforated $K_0(A)$ (see \cite{JS,GJS,R04IJM}). 
All classes of unital simple infinite dimensional $C^*$-algebras 
for which Elliott's classification conjecture is confirmed 
consist of $\mathcal{Z}$-stable algebras. 
It is then natural to consider classification of unital monomorphisms 
from certain $C^*$-algebras into simple $\mathcal{Z}$-stable $C^*$-algebras 
which are not necessarily of real rank zero. 
In the present paper we give a positive solution 
for large classes of unital stably finite $C^*$-algebras 
(Theorem \ref{main}, Corollary \ref{mainAH}, Theorem \ref{main2}). 

Classification of homomorphisms from $C(X)$ into a unital simple algebra 
has a long history. 
The earliest result for this subject is 
the classical Brown-Douglas-Fillmore theory \cite{BDF}. 
They showed that two unital monomorphisms $\phi$ and $\psi$ 
from $C(X)$ to the Calkin algebra $B(H)/K(H)$ 
are unitarily equivalent if and only if $KK(\phi)=KK(\psi)$. 
M. Dadarlat \cite{D} showed that 
two monomorphisms from $C(X)$ to a unital simple purely infinite $C^*$-algebra 
are approximately unitarily equivalent if and only if 
they give the same element in $KL(C(X),A)$. 
In the case that the target algebra $A$ is stably finite, 
G. Gong and H. Lin \cite{GL00ActaSin} showed that 
for a unital simple separable $C^*$-algebra $A$ 
with real rank zero, stable rank one, weakly unperforated $K_0(A)$ 
and a unique quasitrace $\tau$, 
two unital monomorphisms $\phi,\psi:C(X)\to A$ are 
approximately unitarily equivalent if and only if 
$KL(\phi)=KL(\psi)$ and $\tau\circ\phi=\tau\circ\psi$. 
H. Lin \cite{L07Trans} obtained the same result 
for the case that the target algebra $A$ is of tracial rank zero. 
P. W. Ng and W. Winter \cite{NW08} also obtained the same result 
for the case that $X$ is a path connected space and 
$A$ is a $\mathcal{Z}$-stable $C^*$-algebra of real rank zero. 
Similar classification up to approximate unitary equivalence 
is also known for more general domain algebras. 
G. A. Elliott \cite{E} showed that 
two homomorphisms $\phi$ and $\psi$ between AT algebras of real rank zero 
are approximately unitarily equivalent if and only if 
$K_i(\phi)=K_i(\psi)$ for each $i=0,1$. 
K. E. Nielsen and K. Thomsen \cite{NT} obtained the analogous result 
for general AT algebras. 
H. Lin \cite{L07Trans,L0801} classified 
unital homomorphisms from AH algebras 
into simple separable $C^*$-algebras of tracial rank no more than one. 
Classification up to asymptotic unitary equivalence is also studied 
in \cite{P,KK,L09AJM}. 

It should be noted that all the target algebras in these results 
are assumed to have many non-trivial projections 
(and most of them are of real rank zero). 
Indeed almost nothing is known so far 
when the target algebra does not contain non-trivial projections. 
The present paper gives a first non-trivial general result for this subject. 
Our target algebras consist of two classes $\mathcal{C}$ and $\mathcal{C}'$. 
The class $\mathcal{C}$ is the family of 
all unital simple separable $\mathcal{Z}$-stable $C^*$-algebras $A$ 
such that $A\otimes Q$ has tracial rank zero, 
where $Q$ denotes the universal UHF algebra. 
The classification theorems in \cite{W,LN} assert that 
any nuclear $C^*$-algebras $A,B\in\mathcal{C}$ satisfying the UCT 
are isomorphic if and only if 
their $K$-groups are isomorphic as graded ordered groups. 
The other class $\mathcal{C}'$ is the family of 
all unital simple separable stably finite $\mathcal{Z}$-stable 
exact $C^*$-algebras whose extremal traces are finitely many and 
whose projections separate traces. 
The Jiang-Su algebra $\mathcal{Z}$ itself is 
in $\mathcal{C}\cap\mathcal{C}'$ and 
any unital simple separable $\mathcal{Z}$-stable exact $C^*$-algebra 
with a unique trace is in $\mathcal{C}'$. 
In order to extend the target to 
$C^*$-algebras not necessarily of real rank zero, 
we need a new invariant $\Theta_{\phi,\psi}$, 
which is a homomorphism from $K_1(C)$ to $\Aff(T(A))/\Ima D_A$ 
(Lemma \ref{Theta}). 
Roughly speaking, if $A$ is of real rank zero, 
then the range of the dimension map $D_A$ is uniformly dense in $\Aff(T(A))$. 
Therefore this invariant trivially vanishes. 
When $A$ is not of real rank zero, 
it is not the case that $\Ima D_A$ is dense in $\Aff(T(A))$, 
and so the homomorphism $\Theta_{\phi,\psi}$ must be taken into account. 

The paper is organized as follows. 
In Section 2 we collect preliminary material. 
The most important one is the notion of $\Bott(\phi,u)$. 
In Section 3 we introduce the homomorphism $\Theta_{\phi,\psi}$ 
for a pair of unital monomorphisms $\phi,\psi:C\to A$. 
In Section 4 we give a classification theorem of unital monomorphisms 
from commutative $C^*$-algebras to 
certain unital simple $C^*$-algebras of real rank zero. 
The results in Section 4 (especially Theorem \ref{appunique1}) 
partly overlap with those obtained in \cite{L07Trans}. 
But the proof given in \cite{L07Trans} is quite lengthy, and so 
we provide a simpler and self-contained proof for the reader's convenience. 
Section 5 is devoted to the proof of a version of 
the so called basic homotopy lemma (see \cite{L0612}). 
In Section 6 we prove the classification theorem 
for the case that the domain algebra is commutative (Theorem \ref{main}) 
by combining the results obtained in Section 4 and 5. 
We also extend the classification theorem to the case that 
the domain is a unital AH algebra (Corollary \ref{mainAH}). 
In Section 7 we prove the classification theorem for the case that 
the domain is a nuclear $C^*$-algebra in $\mathcal{C}$ satisfying the UCT 
(Theorem \ref{main2}). 

\bigskip

\noindent
\textit{Acknowledgement. }
I would like to thank Huaxin Lin for valuable comments. 
I also like to thank the referee for a number of helpful comments.

\section{Preliminaries}

\subsection{Notations}
We let $\log$ be the standard branch 
defined on the complement of the negative real axis.
For a Lipschitz continuous function $f$, 
we denote its Lipschitz constant by $\Lip(f)$. 
We denote by $\K$ 
the $C^*$-algebra of all compact operators on $\ell^2(\Z)$. 
The normalized trace on $M_n$ is written by $\tr$ and 
the unnormalized trace on $M_n$ or $\K$ is written by $\Tr$. 
The finite cyclic group of order $n$ is written by $\Z_n=\Z/n\Z$. 

Let $A$ be a $C^*$-algebra. 
For $a,b\in A$, we mean by $[a,b]$ the commutator $ab-ba$. 
We write $a\approx_\ep b$ when $\lVert a-b\rVert<\ep$. 
The set of tracial states on $A$ is denoted by $T(A)$ and 
the collection of all continuous bounded affine maps from $T(A)$ to $\R$ 
is denoted by $\Aff(T(A))$. 
We regard $\Aff(T(A))$ as a real Banach space with the sup norm. 
The dimension map $D_A:K_0(A)\to\Aff(T(A))$ is 
defined by $D_A([p])(\tau)=\tau(p)$. 
For a unital positive linear map $\phi:A\to B$ between unital $C^*$-algebras, 
$T(\phi):T(B)\to T(A)$ denotes the affine continuous map induced by $\phi$. 
We say that a $C^*$-algebra $A$ has strict comparison of projections 
if for projections $p,q\in A\otimes\K$, 
$(\tau\otimes\Tr)(p)<(\tau\otimes\Tr)(q)$ for any $\tau\in T(A)$ implies that 
$p$ is Murray-von Neumann equivalent to a subprojection of $q$. 
When $\phi$ is a homomorphism between $C^*$-algebras, 
$K_0(\phi)$ and $K_1(\phi)$ mean the induced homomorphisms on $K$-groups. 

A unital completely positive linear map is called a ucp map for short. 
A ucp map $\phi:A\to B$ is said to be $(G,\delta)$-multiplicative 
if 
\[
\lVert\phi(ab)-\phi(a)\phi(b)\rVert<\delta
\]
holds for any $a,b\in G$, where $G$ is a subset of $A$. 
For two ucp maps $\phi,\psi:A\to B$, 
we write $\phi\sim_{G,\delta}\psi$, 
when there exists a unitary $u\in B$ such that 
\[
\lVert\phi(a)-u\psi(a)u^*\rVert<\delta
\]
holds for any $a\in G$. 

\subsection{The entire $K$-group}
We recall the mod $p$ $K$-theory introduced by C. Schochet \cite{Sch}. 
The $K_i$-group of a $C^*$-algebra $A$ 
with the coefficient module $\Z_n$ for $i=0,1$, $n\in\N$ is defined by 
\[
K_i(A;\Z_n)=K_i(A\otimes\mathcal{O}_{n+1}), 
\]
where $\mathcal{O}_{n+1}$ is the Cuntz algebra. 
For notational convenience, we set $K_i(A;\Z_0)=K_i(A)$. 
Although our definition looks different from the original one in \cite{Sch}, 
it gives an equivalent theory to the conventional one 
(\cite[Theorem 6.4]{Sch}). 
The entire $K$-group $\underline{K}(A)$ of $A$ is defined by 
\[
\underline{K}(A)=\bigoplus_{n=0}^\infty(K_0(A;\Z_n)\oplus K_1(A;\Z_n)). 
\]
For each $i=0,1$ and $n\in\N$, we have the K\"unneth exact sequence 
\[
\begin{CD}
0@>>> K_i(A)\otimes\Z_n@>>> K_i(A;\Z_n)@>>>\Tor(K_i(A),\Z_n)@>>>0. 
\end{CD}
\]
It is known that this exact sequence splits unnaturally. 
For $C^*$-algebras $A,B$, 
we denote by $\Hom_\Lambda(\underline{K}(A),\underline{K}(B))$ 
the set of all homomorphisms from $\underline{K}(A)$ to $\underline{K}(B)$ 
preserving the direct sum decomposition and commuting with 
natural coefficient transformations and the Bockstein operations 
(see \cite{DL,Lintext} for details). 
M. Dadarlat and T. A. Loring \cite{DL} proved 
the following universal multicoefficient theorem. 

\begin{thm}\label{multiUCT}
Let $A$ be a $C^*$-algebra satisfying the UCT and 
let $B$ be a $\sigma$-unital $C^*$-algebra. 
Then there exists a short exact sequence 
\[
0\rightarrow\bigoplus_{i=0,1}\Pext(K_i(A),K_{1-i}(B))\rightarrow KK(A,B)
\rightarrow\Hom_\Lambda(\underline{K}(A),\underline{K}(B))\rightarrow 0, 
\]
where $\Pext(K_i(A),K_{1-i}(B))$ is the subgroup of $\Ext(K_i(A),K_{1-i}(B))$ 
consisting of the pure extensions. 
The sequence is natural in each variable. 
\end{thm}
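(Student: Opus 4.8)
The plan is to follow Dadarlat and Loring \cite{DL}: I would construct a natural comparison homomorphism $\Gamma\colon KK(A,B)\to\Hom_\Lambda(\underline{K}(A),\underline{K}(B))$ and prove that it is surjective with kernel $\bigoplus_{i=0,1}\Pext(K_i(A),K_{1-i}(B))$, by reducing everything to a short list of elementary algebras via a geometric resolution. For $\Gamma$ itself, write $K_i(A;\Z_n)=KK(\C,A\otimes\mathcal{O}_{n+1})$; then, using the functoriality of $-\otimes\mathcal{O}_{n+1}$ together with the Kasparov product, a class $x\in KK(A,B)$ induces homomorphisms $K_i(A;\Z_n)\to K_i(B;\Z_n)$ for all $i\in\{0,1\}$ and $n\in\N$, which one checks commute with the coefficient maps, the Bockstein operators and the other natural transformations; hence they define $\Gamma(x)$, and $\Gamma$ is natural in each variable. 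Composing $\Gamma$ with the forgetful map to $\Hom(K_0(A),K_0(B))\oplus\Hom(K_1(A),K_1(B))$ recovers the morphism of the ordinary UCT, so, as $A$ satisfies the UCT, $\Ker\Gamma$ already lies in $\Ext^1(K_0(A),K_1(B))\oplus\Ext^1(K_1(A),K_0(B))$.

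Next I would handle the elementary algebras. Let $\mathcal{E}$ be the class of countable direct sums of algebras from $\C$, $C_0(\R)$, $\mathcal{O}_{n+1}$, $C_0(\R)\otimes\mathcal{O}_{n+1}$ $(n\ge 2)$; up to $KK$-equivalence $\mathcal{E}$ is closed under suspension. For $P\in\mathcal{E}$ the module $\underline{K}(P)$ is the associated projective $\Lambda$-module, and both sides of $\Gamma_{P,B}$ split as products, over the summands of $P$, of the evident $K$-groups of $B$ (among $K_0(B)$, $K_1(B)$, $K_1(B;\Z_n)$, $K_0(B;\Z_n)$): on the left by the ordinary UCT together with $K_1(\mathcal{O}_{n+1})=0$, on the right by a direct inspection of these projectives. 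One then checks that $\Gamma_{P,B}$ is precisely the resulting identification, so it is an isomorphism for every $\sigma$-unital $B$.

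The two are combined through a geometric resolution and the five lemma. I would invoke here the structure theory of the category of $\Lambda$-modules: it is abelian with enough projectives, its projectives are exactly the $\underline{K}(P)$ with $P\in\mathcal{E}$, and it has global dimension one. Hence there are $P_0,P_1\in\mathcal{E}$ and $KK$-classes $\beta\colon P_0\to A$ and $\alpha\colon P_1\to P_0$ (available as $KK$-classes precisely because $\Gamma$ is already an isomorphism out of algebras in $\mathcal{E}$) realizing a projective resolution $0\to\underline{K}(P_1)\xrightarrow{\underline{K}(\alpha)}\underline{K}(P_0)\xrightarrow{\underline{K}(\beta)}\underline{K}(A)\to 0$, and such that $A$ is $KK$-equivalent to the mapping cone of $\alpha$ (the connecting map on $\underline{K}$ vanishes since $\underline{K}(\alpha)$ is injective, and a $KK$-class inducing the identity on $\underline{K}$ between algebras in the UCT class is automatically a $KK$-equivalence). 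Applying $KK(-,B)$ to the $KK$-triangle of $\alpha$ and $\Hom_\Lambda(-,\underline{K}(B))$ to the resolution gives two six-term exact sequences; $\Gamma$ maps one to the other and is an isomorphism at $P_0$, $P_1$ and their suspensions, so the five lemma yields a short exact sequence
\[
0\longrightarrow\Ker\Gamma\longrightarrow KK(A,B)\xrightarrow{\ \Gamma\ }\Hom_\Lambda(\underline{K}(A),\underline{K}(B))\longrightarrow 0,
\]
and it identifies $\Ker\Gamma$, via $\Gamma$, with the first derived functor $\Ext^1_\Lambda$ of $\Hom_\Lambda(-,\underline{K}(B))$ at $\underline{K}(A)$ (up to the appropriate degree shift).

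To finish, I would compute this group. Since $\Ker\Gamma$ already lies in $\Ext^1(K_0(A),K_1(B))\oplus\Ext^1(K_1(A),K_0(B))$, it is enough to show that it is exactly the pure part: a pure extension of abelian groups stays exact after $-\otimes\Z_n$ and splits on every finitely generated subquotient, which is all the finite-coefficient groups $K_i(-;\Z_n)$ can detect, so such an extension lies in $\Ker\Gamma$; conversely a non-pure extension is visible to some $K_i(-;\Z_n)$. Hence $\Ker\Gamma=\bigoplus_{i=0,1}\Pext(K_i(A),K_{1-i}(B))$, as required. Naturality in $A$ and $B$ comes from that of $\Gamma$, and the passage from separable to $\sigma$-unital $B$ is a routine continuity argument (write $B$ as an inductive limit of separable subalgebras and use that $KK$, $\underline{K}$ and $\Pext$, the last as a $\varprojlim^1$, commute with it). The main obstacle will be the two middle steps, and above all the structure theory of $\Lambda$-modules --- enough projectives, global dimension one, and the identification of the projectives with the $\underline{K}(P)$, $P\in\mathcal{E}$ --- together with the explicit evaluation of $\Gamma$ on these building blocks; the five-lemma comparison and the final identification of $\Ker\Gamma$ with the pure extensions are by comparison formal.
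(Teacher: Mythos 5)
First, a point of reference: the paper does not prove this statement at all; Theorem \ref{multiUCT} is quoted verbatim from Dadarlat--Loring \cite{DL}, so the only meaningful comparison is with the proof in that cited source. Your outline is not that proof. Dadarlat--Loring do not run a geometric-resolution/five-lemma argument in the category of $\Lambda$-modules; they first prove (in their earlier work on mod-$p$ $K$-theory) that the comparison map $\Gamma$ is an isomorphism when $K_*(A)$ is finitely generated (where $\Pext$ vanishes), and then treat general $A$ by writing it, up to $KK$-equivalence, as an inductive limit of bootstrap algebras with finitely generated $K$-theory, combining the Milnor $\varprojlim^1$-sequence for $KK$ with Jensen's theorem identifying $\Pext(\varinjlim G_n,H)$ with $\varprojlim^1\Hom(G_n,H)$. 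That route deliberately avoids the homological statement your argument leans on.

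That statement is where your proposal has a genuine gap: you invoke, as known structure theory, that the category of $\Lambda$-modules is abelian with enough projectives, has global dimension one, and has as its projectives exactly the modules $\underline{K}(P)$ with $P\in\mathcal{E}$. None of this is available off the shelf, and as stated it is almost certainly too strong (at best one could hope that modules of the form $\underline{K}(A)$ --- i.e.\ those satisfying the Bockstein exact sequences --- admit length-one resolutions by such projectives, and even that is a substantive theorem, not a formality). Since the existence of the geometric resolution of $\underline{K}(A)$ realized by a $KK$-class $\alpha\colon P_1\to P_0$ is the engine of your whole argument, asserting it without proof leaves the proof incomplete; this is exactly the point the actual argument in \cite{DL} is designed to circumvent. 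Two smaller remarks: your evaluation of $\Gamma$ on the elementary algebras (e.g.\ that $\Hom_\Lambda(\underline{K}(\mathcal{O}_{n+1}),\underline{K}(B))\cong K_0(B;\Z_n)$) also requires the full $\Lambda$-module structure of $\underline{K}(\mathcal{O}_{n+1})$ and is precisely the content of the finitely generated case in \cite{DL}, not a ``direct inspection''; and the concluding reduction from separable to $\sigma$-unital $B$ is both unnecessary (nothing in the argument uses separability of $B$; only $A$ must be separable) and incorrect as described, since a $\sigma$-unital $C^*$-algebra --- for instance any unital nonseparable one --- need not be a countable inductive limit of separable subalgebras, so no continuity argument of that kind is available.
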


Let $A$ and $B$ be $C^*$-algebras. 
Suppose that $A$ satisfies the UCT and $B$ is $\sigma$-unital. 
In \cite[Section 2.4]{Rordamtext}, 
the $KL$-group $KL(A,B)$ is defined 
as the quotient of $KK(A,B)$ by the image of $\Pext(K_*(A),K_{1-*}(B))$. 
Thus, by the theorem above, $KL(A,B)$ is identified with 
$\Hom_\Lambda(\underline{K}(A),\underline{K}(B))$. 
Throughout this paper we keep this identification. 
For a homomorphism $\phi:A\to B$, 
we denote by $K_i(\phi;\Z_n)$ the homomorphism 
from $K_i(A;\Z_n)$ to $K_i(B;\Z_n)$ induced by $\phi$. 
We set 
\[
KL(\phi)=(K_i(\phi;\Z_n))_{i,n}
\in\Hom_\Lambda(\underline{K}(A),\underline{K}(B)). 
\]
If $\phi:A\to B$ and $\psi:A\to B$ are approximately unitarily equivalent, 
then $KL(\phi)=KL(\psi)$ holds (see \cite{Rordamtext}). 
For $\kappa\in KL(A,B)=\Hom_\Lambda(\underline{K}(A),\underline{K}(B))$ and 
$i=0,1$, we denote its $K_i$-component by $K_i(\kappa)\in\Hom(K_i(A),K_i(B))$. 

\subsection{Almost multiplicative ucp maps}
For a $C^*$-algebra $A$, we mean by $P(A)$ the set of all projections of $A$. 
When $A$ is unital, 
we mean by $U(A)$ the set of all unitaries of $A$. 
The connected component of the identity in $U(A)$ is denoted by $U(A)_0$. 
Let $U_\infty(A)$ be the union of $U(A\otimes M_n)$'s via the embedding 
\[
U(A\otimes M_n)\ni u
\mapsto\begin{pmatrix}u&0\\0&1\end{pmatrix}\in U(A\otimes M_{n+1}). 
\]
Likewise, 
we let $U_\infty(A)_0$ denote the union of $U(A\otimes M_n)_0$'s. 

For a unital $C^*$-algebra $A$, we set 
\[
\mathcal{K}_0(A)=P(A\otimes\K)
\cup\bigcup_{n=1}^\infty P(A\otimes\mathcal{O}_{n+1}), 
\]
\[
\mathcal{K}_1(A)=U_\infty(A)
\cup\bigcup_{n=1}^\infty U(A\otimes\mathcal{O}_{n+1})
\]
and $\mathcal{K}(A)=\mathcal{K}_0(A)\cup\mathcal{K}_1(A)$. 
Let $\phi:A\to B$ be a $(G,\delta)$-multiplicative ucp map. 
For $p\in\mathcal{K}_0(A)$, 
if $G$ is sufficiently large and $\delta$ is sufficiently small, 
then $(\phi\otimes\id)(p)$ is close to a projection and 
one can consider its equivalence class in $K_0(B;\Z_n)$. 
We denote this class by $\phi_\#(p)$. 
In a similar fashion, for $u\in\mathcal{K}_1(A)$, 
if $G$ is sufficiently large and $\delta$ is sufficiently small, 
then $(\phi\otimes\id)(u)$ is close to a unitary and 
one can consider its equivalence class in $K_1(B;\Z_n)$. 
We denote this class by $\phi_\#(u)$. 
Thus, for any finite subset $L\subset\mathcal{K}(A)$, 
if $\phi$ is a sufficiently multiplicative ucp map, then 
$\phi_\#|L:L\to\underline{K}(B)$ is well-defined. 
In this paper, whenever we write $\phi_\#(x)$ or $\phi_\#|L$, 
the ucp map $\phi$ is always assumed to be sufficiently multiplicative 
so that they are well-defined. 
When $\phi$ is sufficiently multiplicative, 
we can verify the following easily: 
$\phi_\#(p)=\phi_\#(q)$ 
for Murray-von Neumann equivalent projections $p,q\in\mathcal{K}_0(A)$, 
$\phi_\#(p+q)=\phi_\#(p)+\phi_\#(q)$ 
for orthogonal projections $p,q\in\mathcal{K}_0(A)$, 
$\phi_\#(u)=0$ 
for any $u\in U_\infty(A)_0\cup U(A\otimes\mathcal{O}_{n+1})_0$ 
and 
$\phi_\#(uv)=\phi_\#(u)+\phi_\#(v)$ 
for any $u,v\in\mathcal{K}_1(A)$. 
Therefore $\phi_\#$ gives rise to a `partial homomorphism' 
from $\underline{K}(A)$ to $\underline{K}(B)$. 

Next, we would like to recall the notion of $\Bott(\phi,w)$ 
introduced in \cite{L0612}. 
Let $\phi:A\to B$ be a unital homomorphism between unital $C^*$-algebras 
and let $w\in B$ be a unitary satisfying 
\[
\lVert[\phi(a),w]\rVert<\delta
\]
for every $a\in G$, 
where $G$ is a large finite subset of $A$ and 
$\delta>0$ is a small positive real number. 
For a projection $p\in A\otimes C$, 
$(w\otimes1)(\phi\otimes\id)(p)+(\phi\otimes\id)(1-p)$ in $B\otimes C$ 
is close to a unitary, where $C$ is $M_n$ or $\mathcal{O}_{n+1}$. 
We denote the equivalence class of this unitary 
by $\Bott(\phi,w)(p)\in K_1(A\otimes C)$. 
Next, we would like to introduce $\Bott(\phi,w)(u)\in K_0(A\otimes C)$ 
for a unitary $u\in A\otimes C$. 
To this end we need to recall 
the notion of Bott elements associated with almost commuting unitaries 
(\cite[2.11]{L0612}). 
There exists a universal constant $\delta_0>0$ such that 
for any unitaries $v_1,v_2$ in a $C^*$-algebra $D$ 
satisfying $\lVert[v_1,v_2]\rVert<\delta_0$, 
the self-adjoint element 
\[
e(v_1,v_2)=\begin{pmatrix}f(v_1)&g(v_1)+h(v_1)v_2\\
g(v_1)+v_2^*h(v_1)&1-f(v_1)\end{pmatrix}\in M_2(D)
\]
has a spectral gap at $1/2$, 
where $f,g,h$ are certain universal real-valued continuous functions on $\T$ 
(\cite[Section 3]{Loring}). 
Then one can consider the $K_0$-class 
\[
[1_{[1/2,\infty)}(e(v_1,v_2))]
-\left[\begin{pmatrix}1&0\\0&0\end{pmatrix}\right]\in K_0(D)
\]
and call it the Bott element associated with $v_1,v_2$. 
In our setting, for a unitary $u\in A\otimes C$, 
we can consider the Bott element in $K_0(A\otimes C)$ corresponding to 
the almost commuting unitaries $(\phi\otimes\id)(u)$ and $w\otimes1$. 
We denote it by $\Bott(\phi,w)(u)\in K_0(A\otimes C)$. 
Thus, for a finite subset $L\subset\mathcal{K}(A)$, 
when $G$ is large enough and $\delta$ is small enough, 
then $\Bott(\phi,w)|L:L\to\underline{K}(B)$ is well-defined. 
In this paper, whenever we write $\Bott(\phi,w)|L$, 
$G$ and $\delta$ are always assumed to be chosen 
so that $\Bott(\phi,w)|L$ is well-defined. 
In the same way as above, 
we can see that $\Bott(\phi,w)$ gives rise to 
a `partial homomorphism' from $K_i(A\otimes C)$ to $K_{1-i}(B\otimes C)$. 

\subsection{The target algebras}
We denote the Jiang-Su algebra by $\mathcal{Z}$ (\cite{JS}). 
When a $C^*$-algebra $A$ satisfies $A\cong A\otimes\mathcal{Z}$, 
we say that $A$ is $\mathcal{Z}$-stable. 
We let $Q$ denote the universal UHF algebra, that is, 
$Q$ is the UHF algebra satisfying $K_0(Q)=\Q$. 

We introduce four classes 
$\mathcal{T}$, $\mathcal{T}'$, $\mathcal{C}$ and $\mathcal{C}'$ 
of unital simple separable stably finite $C^*$-algebras as follows. 

\begin{df}
We define $\mathcal{T}$ to be the class of 
all infinite dimensional unital simple separable $C^*$-algebras 
with tracial rank zero. 
Let $\mathcal{T}'$ be the class of 
infinite dimensional unital simple separable exact $C^*$-algebras $A$ 
with real rank zero, stable rank one, weakly unperforated $K_0(A)$ and 
finitely many extremal tracial states. 
We let $\mathcal{C}$ be the class of 
unital simple separable $\mathcal{Z}$-stable $C^*$-algebras $A$ 
such that $A\otimes Q$ is in $\mathcal{T}$. 
Let $\mathcal{C}'$ be the class of 
unital simple separable stably finite $\mathcal{Z}$-stable exact 
$C^*$-algebras $A$ 
whose projections separate traces and whose extremal traces are finitely many. 
\end{df}

\begin{rem}\label{fourclasses}
\begin{enumerate}
\item Any $A\in\mathcal{T}$ has real rank zero, stable rank one, 
weakly unperforated $K_0(A)$ and strict comparison of projections 
(see \cite[Chapter 3]{Lintext}). 
\item Exactness of $A\in\mathcal{T}'$ is assumed only for the purpose 
of using the fact that any quasitrace on an exact $C^*$-algebra is a trace 
(\cite{H}). 
By \cite[Corollary 6.9.2]{Blacktext}, 
any $A\in\mathcal{T}'$ has strict comparison of projections. 
\item If $A\in\mathcal{C}$, then $A\otimes B$ has tracial rank zero 
for any UHF algebra $B$ by \cite[Lemma 2.4]{MS}, that is, 
$A\otimes B$ belongs to $\mathcal{T}$. 
\item Let $A\in\mathcal{C}'$ and let $B$ be a UHF algebra.  
Then $A\otimes B$ has real rank zero by \cite[Theorem 1.4 (f)]{BKR} and 
has stable rank one by \cite[Corollary 6.6]{R91JFA} (or \cite{R04IJM}). 
By \cite[Theorem 5.2]{R92JFA} (or \cite{GJS}), 
$K_0(A\otimes B)$ is weakly unperforated. 
It follows that $A\otimes B$ is in $\mathcal{T}'$. 
\item Of course, $\mathcal{Z}$ itself is in $\mathcal{C}\cap\mathcal{C}'$. 
\end{enumerate}
\end{rem}

To continue, we fix a notation. 
Let $A$ and $B$ be unital stably finite $C^*$-algebras and 
let $\xi\in\Hom(K_0(A),K_0(B))$. 
We say that $\xi$ is unital when $\xi([1])=[1]$. 
We say that $\xi$ is positive (resp. strictly positive) 
when $\xi(K_0(A)_+)\subset K_0(B)_+$ 
(resp. $\xi(K_0(A)_+\setminus\{0\})\subset K_0(B)_+\setminus\{0\}$). 
Assume further that $A$ satisfies the UCT. 
We denote by $KL(A,B)_{+,1}$ the set of all $\kappa\in KL(A,B)$ 
such that $K_0(\kappa)$ is unital and strictly positive. 

\begin{lem}\label{KL+1}
Let $X$ be a connected compact metrizable space and 
let $B$ be a unital stably finite $C^*$-algebra. 
For $\xi\in\Hom(K_0(C(X)),K_0(B))$ the following are equivalent. 
\begin{enumerate}
\item $\xi$ is unital and strictly positive. 
\item $\xi$ is unital and positive. 
\end{enumerate}
If $K_0(B)$ is simple and weakly unperforated, then 
the two conditions above are equivalent to the following condition. 
\begin{enumerate}\setcounter{enumi}{2}
\item $\xi$ is unital and $\xi(\Ker D_{C(X)})\subset\Ker D_B$. 
\end{enumerate}
\end{lem}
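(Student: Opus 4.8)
The plan is to prove the implications $(1)\Rightarrow(2)\Rightarrow(1)$ first, with no extra hypotheses on $K_0(B)$, and then $(2)\Leftrightarrow(3)$ under the assumption that $K_0(B)$ is simple and weakly unperforated. The implication $(1)\Rightarrow(2)$ is immediate from the definitions, since $K_0(B)_+\setminus\{0\}\subset K_0(B)_+$. For $(2)\Rightarrow(1)$ the key point is that $X$ is connected, so $K_0(C(X))=\Z\oplus\widetilde{K}_0(C(X))$ where the $\Z$-summand is generated by the class of $1$ and sits inside the positive cone in the strong sense: every element of $K_0(C(X))_+\setminus\{0\}$ has the form $n[1]+y$ with $n\geq 1$, because the rank function $K_0(C(X))\to\Z$ (evaluation of the dimension at any point of the connected space $X$) is an order homomorphism that is strictly positive on $K_0(C(X))_+\setminus\{0\}$. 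Hence if $g\in K_0(C(X))_+\setminus\{0\}$ then $g$ dominates $[1]$ in the sense that $g=[1]+g'$ for some $g'\in K_0(C(X))_+$ (again using connectedness to split off a trivial rank-one projection); applying the unital positive map $\xi$ gives $\xi(g)=[1_B]+\xi(g')$, and since $B$ is stably finite, $[1_B]\neq 0$ in $K_0(B)$ and in fact $[1_B]\in K_0(B)_+\setminus\{0\}$, while $\xi(g')\in K_0(B)_+$; stable finiteness then forces $\xi(g)\neq 0$, giving strict positivity.

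For the equivalence with $(3)$ under the additional hypotheses, note that $\Ker D_{C(X)}=\widetilde{K}_0(C(X))$ is precisely the torsion-free-rank-zero part, i.e.\ the kernel of the rank homomorphism, and that $\Ker D_{C(X)}$ together with $[1]$ generates $K_0(C(X))$ rationally. The implication $(2)\Rightarrow(3)$: if $\xi$ is unital and positive, take $z\in\Ker D_{C(X)}$; for a large enough integer $N$ both $N[1]+z$ and $N[1]-z$ lie in $K_0(C(X))_+$ (this is the standard fact that in $K_0$ of a commutative algebra over a compact space any element of rank zero becomes, after adding a large multiple of the unit, represented by an actual projection, using connectedness and the cancellation coming from adding trivial projections), so $\xi(N[1]\pm z)=N[1_B]\pm\xi(z)\in K_0(B)_+$; since $K_0(B)$ is simple and weakly unperforated, an element $w$ with $N[1_B]\pm w\geq 0$ for some $N$ and with $D_B(w)=0$ must satisfy $D_B(w)\ge 0$ and $D_B(-w)\ge 0$, and weak unperforation applied on traces shows $D_B(w)=0$ is consistent; more directly, pairing with any trace $\tau\in T(B)$ (nonempty as $B$ is stably finite) gives $\tau(\xi(z))\in[-N\tau(1_B)/k, N\tau(1_B)/k]$ for all $k$ after dividing, forcing $D_B(\xi(z))=0$, and then weak unperforation together with simplicity gives $\xi(z)\in\Ker D_B$. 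Conversely $(3)\Rightarrow(2)$: given $g\in K_0(C(X))_+$, write $g=n[1]+z$ with $n=D_{C(X)}(g)$ evaluated at a point (so $n\ge 0$) and $z\in\Ker D_{C(X)}$; then $\xi(g)=n[1_B]+\xi(z)$ with $\xi(z)\in\Ker D_B$, so $D_B(\xi(g))=nD_B([1_B])\ge 0$, and if this is $0$ then $n=0$ and $D_B(\xi(g))=0$ with $K_0(B)$ simple weakly unperforated forcing $\xi(g)=0\in K_0(B)_+$, while if $D_B(\xi(g))$ is strictly positive on all of $T(B)$ then weak unperforation (i.e.\ strict comparison at the level of $K_0$ for simple weakly unperforated groups) yields $\xi(g)\in K_0(B)_+$.

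The main obstacle I expect is the careful handling of the order structure on $K_0(C(X))$ for connected $X$ — specifically the two facts that (a) every nonzero positive element has the form $[1]+(\text{positive})$ and (b) every rank-zero element becomes positive after adding a large multiple of $[1]$. Both are standard consequences of connectedness plus the ability to add and cancel trivial projections in $M_\infty(C(X))$, but they need to be invoked cleanly; everything else is bookkeeping with the rank homomorphism $D_{C(X)}$, with traces on the stably finite algebra $B$, and with weak unperforation of $K_0(B)$ (which, for a simple group, is exactly the statement that strict positivity of $D_B$ detects membership in the positive cone). I would phrase the weak-unperforation step so that it only uses: $K_0(B)_+=\{x : D_B(x)(\tau)>0\ \forall\tau\}\cup\{0\}$ when $K_0(B)$ is simple and weakly unperforated and $T(B)\neq\emptyset$, which is the form in which it will be applied.
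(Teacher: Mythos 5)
Your overall skeleton (prove $(1)\Leftrightarrow(2)$ unconditionally, then relate $(2)$ and $(3)$ via the decomposition $K_0(C(X))=\Z[1]\oplus\Ker D_{C(X)}$) matches the paper's, but two of the key steps rest on claims that are false or unproved. First, in $(2)\Rightarrow(1)$ you assert that every $g\in K_0(C(X))_+\setminus\{0\}$ can be written as $[1]+g'$ with $g'\in K_0(C(X))_+$ by ``splitting off a trivial rank-one projection.'' This is false: over $X=S^2$ the class of the Bott line bundle is a nonzero positive element of rank one, and its difference with $[1]$ is the Bott generator of $\widetilde{K}^0(S^2)$, which is not the class of a projection (a positive class of rank zero over a connected space is $0$). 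Splitting off a trivial summand needs the rank to be large relative to the dimension of $X$, not merely $\geq 1$. What is true, and what the paper uses, is that $K_0(C(X))$ is a \emph{simple} ordered group (Blackadar, Corollary 6.3.6), so $[1]\leq ng$ for some $n\in\N$; then $n\xi(g)\geq[1_B]$, and stable finiteness of $B$ (so that $K_0(B)$ is an ordered group with $[1_B]\neq0$) gives $\xi(g)\neq0$. So this direction is salvageable, but not by the argument you give.

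Second, your $(2)\Rightarrow(3)$ does not close. From $N[1]\pm z\geq0$ you only get $\lvert\tau(\xi(z))\rvert\leq N$; to ``divide by $k$'' you would need a single $N$ working for all multiples $kz$ simultaneously, i.e.\ that rank-zero classes are infinitesimal in $K_0(C(X))$. That is standard for finite-dimensional $X$ but is precisely what needs proof for a general compact metrizable $X$, and you never supply it (the preceding sentence invoking weak unperforation is circular, since it assumes $D_B(w)=0$). The paper avoids this entirely: since $\xi$ is unital and positive, $\tau\circ\xi$ is a state on $K_0(C(X))$, and by Blackadar, Corollary 6.10.3(e), every state on $K_0(C(X))$ comes from a trace on $C(X)$, hence vanishes on $\Ker D_{C(X)}$; this gives $\tau(\xi(z))=0$ directly. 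In $(3)\Rightarrow(2)$ there is a further slip: from $n=0$ you infer $\xi(g)=0$ via ``$D_B(\xi(g))=0$ and $K_0(B)$ simple weakly unperforated force $\xi(g)=0$,'' which is wrong since $\Ker D_B$ can be nontrivial; the correct (and easier) point is that $g$ itself is a positive class of rank zero over connected $X$, hence $g=0$. Finally, your closing characterization $K_0(B)_+\setminus\{0\}=\{x:\tau(x)>0\ \forall\tau\in T(B)\}$ is phrased with traces on $B$, whereas Blackadar, Theorem 6.8.5 gives the strict ordering from the \emph{states} of the ordered group $K_0(B)$; the paper works with states (pairing $\xi$ against a state $\rho$ of $K_0(B)$ and identifying the resulting state on $K_0(C(X))$ with the normalized rank), which avoids having to know that every state of $K_0(B)$ is induced by a trace on $B$ in the stated generality.
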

\begin{proof}
(1)$\Rightarrow$(2) is clear. 
To show (2)$\Rightarrow$(1), assume that $\xi$ is unital and positive. 
By \cite[Corollary 6.3.6]{Blacktext}, 
$K_0(C(X))$ is a simple ordered group. 
Hence for any $x\in K_0(C(X))_+\setminus\{0\}$ 
there exists $n\in\N$ such that $[1]\leq nx$. 
Then $[1]\leq n\xi(x)$ in $K_0(A)$, and so $\xi(x)\neq0$. 

Assume that $K_0(B)$ is simple and weakly unperforated. 
Let us show (2)$\Rightarrow$(3). 
Take $x\in\Ker D_{C(X)}$ and $\tau\in T(B)$. 
The composition of $\tau$ and $\xi$ is a state on $K_0(C(X))$. 
By \cite[Corollary 6.10.3 (e)]{Blacktext}, 
any state on $K_0(C(X))$ comes from a trace. 
Therefore $\tau(\xi(x))=0$. 
It remains to show that (3) implies (1). 
Take $x\in K_0(C(X))_+\setminus\{0\}$. 
Let $\rho$ be a state on $K_0(B)$. 
The composition of $\rho$ and $\xi$ is a state on $K_0(C(X))$. 
By \cite[Corollary 6.10.3 (e)]{Blacktext} 
it comes from a trace on $C(X)$, and so $\rho(\xi(x))>0$. 
By \cite[Theorem 6.8.5]{Blacktext}, 
$K_0(B)$ has the strict ordering from its states. 
It follows that $\xi(x)$ is in $K_0(B)\setminus\{0\}$. 
\end{proof}

We recall the following three theorems from \cite{L01K,NW08}. 

\begin{thm}[{\cite[Corollary 4.6]{L01K}}]\label{LinK}
Let $A$ be a unital simple separable $C^*$-algebra 
with real rank zero, stable rank one and weakly unperforated $K_0(A)$. 
Then there exist a unital simple separable AH algebra $B$ 
with real rank zero and slow dimension growth 
and a unital homomorphism $\phi:B\to A$ 
which induces a graded ordered isomorphism from $K_*(B)$ to $K_*(A)$. 
\end{thm}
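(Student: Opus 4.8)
The plan is to realise the scaled ordered $K$-theory of $A$ by a simple AH algebra of the required kind and then to implement that identification by an actual homomorphism.

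Since $A$ is unital, simple, separable, with stable rank one and weakly unperforated $K_0$, the triple $(K_0(A),K_0(A)_+,[1_A])$ is a countable simple weakly unperforated ordered abelian group with order unit, and $K_1(A)$ is a countable abelian group. Moreover, as $A$ is infinite dimensional with real rank zero and stable rank one, $D_A(K_0(A))$ is uniformly dense in $\Aff(T(A))$; this density is precisely the feature, beyond simplicity and weak unperforation, that is needed for the invariant to be that of an infinite dimensional simple algebra of real rank zero (without it one could not even realise $(\Z,\Z_+,1)$). Invoking the range-of-the-invariant results for unital simple AH algebras of real rank zero and slow dimension growth (Elliott, Elliott--Gong, Gong), there is such an algebra $B=\varinjlim(B_m,\psi_m)$, with each $B_m$ a finite direct sum of matrix algebras over finite CW complexes of slowly growing dimension, together with an order isomorphism $\alpha_0\colon K_0(B)\to K_0(A)$ satisfying $\alpha_0([1_B])=[1_A]$ and an isomorphism $\alpha_1\colon K_1(B)\to K_1(A)$.

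Next I would produce a unital homomorphism $\phi\colon B\to A$ inducing $(\alpha_0,\alpha_1)$. Since $B$ is a nuclear AH algebra it satisfies the UCT, so the natural map from $KK(B,A)$ to $\Hom(K_0(B),K_0(A))\oplus\Hom(K_1(B),K_1(A))$ is surjective, with kernel $\bigoplus_i\Ext(K_i(B),K_{1-i}(A))$; hence one may choose $\kappa\in KK(B,A)$ with $K_i(\kappa)=\alpha_i$ for $i=0,1$. As $\alpha_0$ is a unital order isomorphism, the image of $\kappa$ in $KL(B,A)$ lies in $KL(B,A)_{+,1}$. The remaining step is an existence theorem: for $B$ a unital simple AH algebra of real rank zero and slow dimension growth and $A$ as in the statement, every class in $KL(B,A)_{+,1}$ is realised by a unital homomorphism $\phi\colon B\to A$. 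Such $\phi$ then has $K_i(\phi)=\alpha_i$, hence induces a graded ordered isomorphism $K_*(B)\cong K_*(A)$, which is the assertion.

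The main obstacle is this last existence theorem. Concretely one builds $\phi$ as an inductive limit of unital maps $\phi_m\colon B_m\to A$ with $\phi_{m+1}\circ\psi_m$ approximately unitarily equivalent to $\phi_m$ on an increasing, eventually dense sequence of finite subsets of $B_m$ and with summable tolerances, so that the limit exists and its induced $K$-theory map is the colimit of the $K_*(\phi_m)$, hence equals $(\alpha_0,\alpha_1)$. Constructing each $\phi_m$ with prescribed partial $K$-theory data, together with the companion approximate uniqueness statement for two unital maps out of a single building block $M_k(C(X))$ into $A$ that agree on $K$-theory, is where real rank zero (enough projections to carve out the images of point evaluations), stable rank one (cancellation of projections and control of the unitary group) and weak unperforation (the comparison needed to place the images) enter; this is the technical heart of the argument. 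Verifying afterwards that $\phi$ induces exactly $(\alpha_0,\alpha_1)$ is then routine.
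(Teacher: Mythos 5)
The paper itself contains no proof of this statement: it is imported verbatim from Lin's \cite[Corollary 4.6]{L01K}, the only added remark being that the nuclearity hypothesis appearing there is not actually used. So the relevant comparison is with Lin's derivation, and your outline follows the same route he does: realize $(K_0(A),K_0(A)_+,[1_A],K_1(A))$ by a unital simple AH algebra $B$ with real rank zero and slow dimension growth via the range-of-the-invariant theorem, lift the resulting graded ordered isomorphism through the UCT to a class $\kappa\in KL(B,A)_{+,1}$, and then invoke an existence theorem realizing $\kappa$ by a unital homomorphism $\phi:B\to A$. The problem is that this ``remaining step'' is not a step at all: it is the main theorem of \cite{L01K}, and it constitutes essentially the entire content of that paper. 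Your closing paragraph (build $\phi$ as a limit of maps on the building blocks $B_m$ by approximate intertwining, using existence and approximate uniqueness results for maps out of $p(C(X)\otimes M_k)p$ into $A$) names the strategy but proves none of it. If quoting \cite{L01K} is permitted --- as the paper under review does --- your argument is complete in outline; as a self-contained proof it has its genuine gap exactly there.

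A second, more specific point: the hypothesis you single out to justify the range-of-the-invariant step is not the right one. What that theorem requires of $K_0(A)$, beyond being a countable simple weakly unperforated ordered group with order unit, is the Riesz interpolation (decomposition) property; this is exactly what real rank zero supplies, via Zhang's Riesz decomposition for projections, and it does not follow from simplicity and weak unperforation alone. Density of $D_A(K_0(A))$ in $\Aff(T(A))$ is neither the needed condition nor available under the stated hypotheses: $A$ is not assumed infinite dimensional, and for $A=M_n$ the image of $D_A$ is discrete while the conclusion of the theorem holds trivially (take $B=A$). So you should either dispose of the finite-dimensional case separately and invoke Riesz interpolation (not density) in the infinite-dimensional case, or simply record that real rank zero gives Riesz decomposition on $K_0$ in all cases; with that correction, and with \cite{L01K} quoted for the existence theorem, your argument matches the intended derivation.
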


\begin{thm}[{\cite[Theorem 0.1]{NW08}}]\label{NWexist}
Let $X$ be a path connected compact metrizable space and 
let $A$ be a unital simple separable exact $C^*$-algebra 
with real rank zero, stable rank one and weakly unperforated $K_0(A)$. 
Let $\kappa\in KL(C(X),A)_{+,1}$ and 
let $\lambda:T(A)\to T(C(X))$ be an affine continuous map such that 
$\lambda(\tau)$ gives a strictly positive measure on $X$ 
for any $\tau\in T(A)$. 
Then there exists a unital monomorphism $\phi:C(X)\to A$ such that 
$KL(\phi)=\kappa$ and $T(\phi)=\lambda$. 
\end{thm}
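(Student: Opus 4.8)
The plan is to reduce the statement to the established existence theory for unital monomorphisms from $C(X)$ into simple AH algebras of real rank zero and slow dimension growth, using Theorem \ref{LinK} to replace the target $A$ by such an AH model and transporting the prescribed data across the induced isomorphisms. First, apply Theorem \ref{LinK} to obtain a unital simple separable AH algebra $B$ of real rank zero and slow dimension growth and a unital homomorphism $j\colon B\to A$ with $K_*(j)$ a graded ordered isomorphism. Since $K_i(j)$ is an isomorphism for $i=0,1$, naturality of the K\"unneth sequence and the five lemma give that $K_i(j;\Z_n)$ is an isomorphism for every $n$, so $\underline{K}(j)$ is an isomorphism in $\Hom_\Lambda$. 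Using the identification $KL(C(X),B)=\Hom_\Lambda(\underline{K}(C(X)),\underline{K}(B))$ (valid since $C(X)$ satisfies the UCT and $B$ is $\sigma$-unital), there is a unique $\kappa_0\in KL(C(X),B)$ with $KL(j)\circ\kappa_0=\kappa$, and $\kappa_0\in KL(C(X),B)_{+,1}$ because $K_0(j)^{-1}$ is a unital order isomorphism. I would also check that $T(j)\colon T(A)\to T(B)$ is an affine homeomorphism: it is injective because in a real rank zero, stable rank one algebra a trace is determined by its values on projections and $K_0(j)$ is onto, and it is surjective because for algebras with real rank zero, stable rank one and weakly unperforated $K_0$ the trace space is affinely homeomorphic to the state space of $K_0$ (compare the proof of Lemma \ref{KL+1} and \cite{Blacktext}) while $K_0(j)$ is a unital order isomorphism. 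Hence $\mu:=\lambda\circ T(j)^{-1}\colon T(B)\to T(C(X))$ is a well-defined affine continuous map whose values are strictly positive measures on $X$.

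A unital simple AH algebra of real rank zero and slow dimension growth has tracial rank zero, so the existence theory for unital monomorphisms from $C(X)$ into such algebras (see \cite{L07Trans,Lintext}; path-connectedness of $X$ enters here so that positivity of $K_0(\kappa_0)$ reduces to its unital normalization) produces a unital monomorphism $\psi\colon C(X)\to B$ with $KL(\psi)=\kappa_0$ and $T(\psi)=\mu$, which is automatically injective since $\sigma(\psi(f))=\int_X f\,d\mu(\sigma)>0$ for every nonzero positive $f\in C(X)$ and every $\sigma\in T(B)$. Setting $\phi:=j\circ\psi$ finishes the argument: $j$ is injective because $B$ is simple and $j$ is unital, so $\phi$ is a unital monomorphism; $KL(\phi)=KL(j)\circ KL(\psi)=\kappa$; and $T(\phi)=T(\psi)\circ T(j)=\mu\circ T(j)=\lambda$.

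The main obstacle is the AH-level existence input used above: building a genuine unital monomorphism $C(X)\to B$ that realizes the full multicoefficient datum $\kappa_0$ and, simultaneously, the faithful trace map $\mu$. Matching the trace map is done with point evaluations at finitely many points of $X$ distributed according to the measures $\mu(\sigma)$, cut down by orthogonal projections provided by real rank zero; matching $\kappa_0$ requires in addition ``characteristic'' summands built from the canonical bundles and unitaries over the cells of $X$ (after writing $X$ as an inverse limit of finite CW complexes), whose $\underline{K}$-classes are prescribed by $\kappa_0$ via the wealth of projections and unitaries in a tracial-rank-zero algebra; and one must then patch the resulting approximately multiplicative maps into an actual homomorphism, which is where the inductive and homotopy arguments, together with care about the derived-inverse-limit ($\Pext$) phenomena for non-manifold $X$, are needed. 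An alternative, more self-contained route would bypass Theorem \ref{LinK} altogether: lift $\kappa$ to a $KK$-element via the universal coefficient theorem, represent it as a formal difference of unital homomorphisms from $C(X)$ into a matrix amplification of $A$, and absorb the superfluous summand using simplicity, strict comparison of projections, and real rank zero of $A$; this is technically heavier than the reduction sketched above.
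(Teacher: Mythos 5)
The paper offers no proof of this statement: it is imported verbatim from \cite[Theorem 0.1]{NW08}, with only a remark that the nuclearity hypothesis there can be relaxed to exactness and that the compatibility condition (b) of \cite{NW08} is automatic because all traces on $C(X)$ induce the same state on $K_0(C(X))$. Your proposal essentially reconstructs the route of that cited proof, and it is consistent with how the paper itself argues in Lemma \ref{adjust1}: use Theorem \ref{LinK} to produce the AH model $B$ and the unital homomorphism $j\colon B\to A$ inducing a graded ordered $K$-isomorphism, identify $T(A)$ and $T(B)$ with the state spaces of $K_0(A)$ and $K_0(B)$ (this is where exactness enters, via quasitraces being traces, together with real rank zero, stable rank one and weak unperforation), pull $\kappa$ and $\lambda$ back to $(\kappa_0,\mu)$, realize them over $B$, and compose with $j$. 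Your bookkeeping steps (K\"unneth plus five lemma to see $\underline{K}(j)$ is a $\Lambda$-isomorphism, unitality and strict positivity of $K_0(\kappa_0)$, injectivity of $\psi$ from faithfulness of $\mu(\sigma)$) are fine. The one thing to be clear about is that the entire weight of the theorem rests on the ingredient you only cite and sketch: the existence, over a unital simple AH algebra of real rank zero and slow dimension growth (equivalently, a tracial rank zero algebra, which is what makes Lin's machinery applicable), of a unital monomorphism from $C(X)$ realizing a prescribed element of $KL(C(X),B)_{+,1}$ together with a prescribed faithful trace map; for connected $X$ the needed compatibility between the two data is automatic, as the paper notes. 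That existence theorem is exactly the nontrivial content of \cite{NW08} (whose proof, as the paper records, uses \cite[Corollary 4.6]{L01K} just as you do), so your argument is a reduction to the same input rather than an independent proof — which is an acceptable division of labor here, since the paper itself treats the whole statement as a quoted result.
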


\begin{thm}[{\cite[Theorem 0.2]{NW08}}]\label{NWunique}
Let $X$ be a path connected compact metrizable space and 
let $A$ be a unital simple separable exact $\mathcal{Z}$-stable 
$C^*$-algebra with real rank zero. 
Let $\phi,\psi:C(X)\to A$ be unital monomorphisms. 
Then $\phi$ and $\psi$ are approximately unitarily equivalent 
if and only if $KL(\phi)=KL(\psi)$ and 
$\tau\circ\phi=\tau\circ\psi$ for all $\tau\in T(A)$. 
\end{thm}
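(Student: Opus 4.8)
The plan is to prove the nontrivial ``if'' direction; the converse is immediate, since approximate unitary equivalence preserves $KL$ (as recorded in Section~2.2) and intertwines the maps induced on trace spaces. So suppose $KL(\phi)=KL(\psi)$ and $\tau\circ\phi=\tau\circ\psi$ for every $\tau\in T(A)$, fix a finite subset $\mathcal{F}\subset C(X)$ and $\ep>0$, and aim to produce a unitary $u\in A$ with $\phi(f)\approx_\ep u\psi(f)u^*$ for all $f\in\mathcal{F}$. By Remark~\ref{fourclasses} the hypotheses give $A$ real rank zero, stable rank one, weakly unperforated $K_0(A)$ and strict comparison of projections.

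The first step is a \emph{stable uniqueness} reduction. Since $C(X)$ satisfies the UCT and $KL(\phi)=KL(\psi)$, the stable uniqueness theorems of Dadarlat--Eilers and Lin, in the form available for monomorphisms into a simple $C^*$-algebra with the above structural properties, produce finitely many points $x_1,\dots,x_m\in X$ so that, letting $\sigma\colon C(X)\to M_m(A)$ be the point-evaluation homomorphism $f\mapsto\diag(f(x_1),\dots,f(x_m))$, the maps $\phi\oplus\sigma$ and $\psi\oplus\sigma$ are $(\mathcal{F},\ep)$-approximately unitarily equivalent in $M_{m+1}(A)$. Using $\mathcal{Z}$-stability together with real rank zero and strict comparison, one may moreover arrange $\sigma$ to be realized on a corner $eAe$ with $\tau(e)$ uniformly as small as we wish and $[e]$ matching the required $K_0$-data.

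The core of the argument is then an \emph{absorption} step: removing the summand $\sigma$. Using real rank zero and strict comparison, cut out a projection of small trace and replace $\phi$, up to $(\mathcal{F},\ep)$-approximate unitary equivalence, by a homomorphism that agrees with a compression of $\phi$ on a large complementary corner and is a point-evaluation-type map on the small corner; do the same for $\psi$. Because $\tau\circ\phi=\tau\circ\psi$, the two complementary compressions carry matching tracial and $KL$-data, while the two leftover finite-dimensional pieces, living on copies of $C(X)$ on the small corners, can be connected by a path of homomorphisms since $X$ is path connected. A Berg-type rotation along this path, supplied precisely by the version of the basic homotopy lemma to be proved in Section~5, which converts ``$KL$-trivial together with matching traces'' into the required path of unitaries, brings $\phi$ and $\psi$ into $(\mathcal{F},\ep)$-agreement after conjugation. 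Combined with the stable uniqueness step, and one further use of strict comparison and $\mathcal{Z}$-stability to descend the ambient unitary from $M_{m+1}(A)$ back to $A$, this yields the desired $u$.

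The hard part is the absorption step. Making ``replace $\phi$ by a point evaluation on a small corner'' precise requires a tracial-approximation argument controlled simultaneously in $\underline{K}$-theory and in traces; one must realize on $eAe$ a point-evaluation model with prescribed $K$-theoretic and tracial invariants, and then use path connectedness of $X$ to homotope the two models into one another while keeping the associated Bott obstruction under control along the homotopy. It is exactly this control that forces the hypothesis $\tau\circ\phi=\tau\circ\psi$ rather than merely $KL(\phi)=KL(\psi)$. Note that nuclearity of $A$ plays no role: exactness is used only to know that every quasitrace on $A$ is a trace (\cite{H}), which is all that the comparison theory above requires.
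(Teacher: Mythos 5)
You should first note that the paper offers no proof of this statement at all: Theorem \ref{NWunique} is quoted verbatim from \cite[Theorem 0.2]{NW08}, and the only original content here is the subsequent remark that nuclearity of $A$ can be relaxed to exactness because exactness is used only to know that quasitraces are traces. So your sketch has to stand on its own against Ng--Winter's argument, and as written it does not amount to a proof. The decisive ``absorption step'' is exactly the hard content, and you leave it as a description of what would have to be done (``a tracial-approximation argument controlled simultaneously in $\underline{K}$-theory and in traces'') rather than an argument. Worse, the one concrete assertion you do make there --- that $\mathcal{Z}$-stability, real rank zero and strict comparison let you realize the absorbing point-evaluation summand $\sigma$ on a corner $eAe$ with $\tau(e)$ arbitrarily small --- is unjustified and, as stated, backwards: in the stable uniqueness/Gong--Lin mechanism (Theorem \ref{GongLin}) the point-evaluation summand must \emph{dominate} the maps being compared, and what actually has to be made small in trace is the non-point-evaluation remainder of $\phi$ and $\psi$ themselves. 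Producing such a decomposition is precisely the tracial-approximation input, which in this paper is available only under tracial rank zero or finitely many extremal traces (Lemmas \ref{matrix}, \ref{TAF}, \ref{rr0}) and is not available for the algebras of Theorem \ref{NWunique}, where $T(A)$ may have infinitely many extreme points and no tracial rank hypothesis is made.

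Your appeal to ``the basic homotopy lemma to be proved in Section~5'' is likewise out of scope and architecturally misplaced. Theorems \ref{Basic} and \ref{Basic2} are proved for $A\in\mathcal{T}\cup\mathcal{T}'$, whereas the $A$ in Theorem \ref{NWunique} is only assumed simple, separable, exact, $\mathcal{Z}$-stable and of real rank zero; moreover the function of that lemma is to turn vanishing Bott data of an almost commuting unitary into a short path to $1$, not to turn $\tau\circ\phi=\tau\circ\psi$ into approximate unitary equivalence; and within the paper the homotopy lemma is built \emph{on top of} the uniqueness results of Section~4 together with the Ng--Winter existence theorem (Theorem \ref{NWexist}), so the intended logical flow runs in the opposite direction from the one you propose. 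To get an actual proof you must either reproduce Ng--Winter's argument (which uses $\mathcal{Z}$-stability in an essential way to cope with an arbitrary trace simplex) or restrict to $A\in\mathcal{T}\cup\mathcal{T}'$, in which case the paper's own Theorem \ref{rr0unique} gives the conclusion by the tracial-approximation route of Section~4. The two points you do get right --- the trivial converse direction and the observation that exactness is needed only so that quasitraces are traces (\cite{H}) --- agree with the paper's remark, but they do not close this gap.
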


\begin{rem}
\begin{enumerate}
\item The proof of \cite[Theorem 0.1]{NW08} uses \cite[Corollary 4.6]{L01K}, 
and in the statement of \cite[Corollary 4.6]{L01K} 
$A$ is assumed to be nuclear. 
But the proof given there does not use nuclearity, and so we omit it. 
In the statement of \cite[Theorem 0.2]{NW08}, 
$A$ is also assumed to be nuclear. 
But its proof needs only exactness of $A$. 
\item The condition (b) of \cite[Theorem 0.1]{NW08} automatically 
follows from other assumptions, 
because any traces on $C(X)$ induce the same state on $K_0(C(X))$ 
(\cite[Corollary 6.10.3 (a)]{Blacktext}) and 
$K_0(C(X))$ has no other states (\cite[Corollary 6.10.3 (e)]{Blacktext}). 
\end{enumerate}
\end{rem}

We give a generalization of Theorem \ref{NWexist} for later use. 

\begin{cor}\label{NWexist2}
Let $C=\bigoplus_{i=1}^np_i(C(X_i)\otimes M_{k_i})p_i$, 
where $X_i$ is a path connected compact metrizable space and 
$p_i\in C(X_i)\otimes M_{k_i}$ is a projection. 
Let $A$ be a unital simple separable exact $C^*$-algebra 
with real rank zero, stable rank one and weakly unperforated $K_0(A)$. 
Let $\kappa\in KL(C,A)_{+,1}$ and 
let $\lambda:T(A)\to T(C)$ be an affine continuous map such that 
$\lambda(\tau)$ is a faithful trace for any $\tau\in T(A)$. 
Suppose that $\lambda(\tau)(p_i)=\tau(K_0(\kappa)([p_i]))$ holds 
for any $\tau\in T(A)$ and $i=1,2,\dots,n$. 
Then there exists a unital monomorphism $\phi:C\to A$ such that 
$KL(\phi)=\kappa$ and $T(\phi)=\lambda$. 
\end{cor}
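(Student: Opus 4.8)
The plan is to reduce Corollary \ref{NWexist2} to Theorem \ref{NWexist} by treating each direct summand separately. Write $C=\bigoplus_{i=1}^n C_i$ with $C_i=p_i(C(X_i)\otimes M_{k_i})p_i$, and let $e_i\in C$ denote the unit of $C_i$, so that $e_1,\dots,e_n$ are orthogonal central projections of $C$ with $\sum e_i=1$. The idea is to build monomorphisms $\phi_i:C_i\to e_i'Ae_i'$ into suitable corners of $A$ and then set $\phi=\bigoplus\phi_i$. The corners should be chosen so that $[e_i']=K_0(\kappa)([e_i])$ in $K_0(A)$; since $\kappa\in KL(C,A)_{+,1}$ the elements $K_0(\kappa)([e_i])$ are positive, nonzero, sum to $[1]$, and by strict comparison (using real rank zero, stable rank one and weak unperforation, as in Remark \ref{fourclasses}) we can realize them by genuine orthogonal projections $e_i'\in A$ with $\sum e_i'=1$. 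The corner $A_i:=e_i'Ae_i'$ is then again a unital simple separable exact $C^*$-algebra with real rank zero, stable rank one and weakly unperforated $K_0$, with $K_0(A_i)\cong K_0(A)$ naturally (via the corner embedding being a $KK$-equivalence, since $e_i'$ is a full projection in a simple algebra) and $T(A_i)\cong T(A)$ via $\tau\mapsto \tau|_{A_i}/\tau(e_i')$.

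Next I would transport the data. Let $C_i$ be Morita equivalent to $C(X_i)$ via the full projection $p_i$, so $K_\ast(C_i)\cong K_\ast(C(X_i))$ and $KL(C_i,A_i)\cong KL(C(X_i),A_i)$, and similarly $T(C_i)\cong T(C(X_i))$ with faithful traces corresponding to faithful traces, i.e. to strictly positive Borel measures on $X_i$. The component $\kappa$ restricts to $\kappa_i\in KL(C_i,A_i)$; one checks $\kappa_i\in KL(C(X_i),A_i)_{+,1}$ — unitality is exactly the compatibility hypothesis $\lambda(\tau)(p_i)=\tau(K_0(\kappa)([p_i]))$ rephrased at the level of $K_0$ (it forces $K_0(\kappa)([e_i])=[e_i']$, hence $K_0(\kappa_i)$ sends the class of the unit of $C_i$ to $[e_i']=[1_{A_i}]$), and strict positivity follows from Lemma \ref{KL+1} together with the fact that $K_0(A_i)$ is simple and weakly unperforated (being isomorphic to $K_0(A)$, which is simple since $A$ is simple). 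Similarly $\lambda$ restricts to an affine continuous map $\lambda_i:T(A_i)\to T(C_i)\cong T(C(X_i))$ whose values are faithful traces, hence strictly positive measures on the path connected space $X_i$. Theorem \ref{NWexist} now produces unital monomorphisms $\phi_i:C(X_i)\to A_i$, equivalently $\phi_i:C_i\to A_i$, with $KL(\phi_i)=\kappa_i$ and $T(\phi_i)=\lambda_i$.

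Finally I would assemble $\phi:C\to A$ by $\phi(x)=\sum_{i=1}^n\phi_i(e_ixe_i)$. This is a unital monomorphism because each $\phi_i$ is a monomorphism on the summand $C_i$ and the ranges sit in the orthogonal corners $A_i$. It remains to verify $KL(\phi)=\kappa$ and $T(\phi)=\lambda$. For the $K$-theory: $\underline K(C)=\bigoplus_i\underline K(C_i)$, and $\phi_\#$ on the $i$-th summand is the composite of $(\phi_i)_\#$ with the inclusion-induced map $\underline K(A_i)\to\underline K(A)$; under the identification $\underline K(A_i)\cong\underline K(A)$ and the definition of $\kappa_i$ as the restriction of $\kappa$, these reassemble to exactly $\kappa$, using that a homomorphism in $\mathrm{Hom}_\Lambda$ out of a direct sum is determined by its restrictions to the summands. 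For the traces: any $\tau\in T(A)$ restricts on $A_i$ to $\tau(e_i')\,\tau_i$ with $\tau_i\in T(A_i)$, and $\tau\circ\phi$ evaluated on $x\in C_i$ equals $\tau(\phi_i(x))=\tau(e_i')\,\tau_i(\phi_i(x))=\tau(e_i')\,\lambda_i(\tau_i)(x)$; one checks this agrees with $\lambda(\tau)(x)$ by the construction of $\lambda_i$ from $\lambda$ and the normalization. The main obstacle I anticipate is purely bookkeeping: keeping the several identifications (Morita equivalence $C_i\sim C(X_i)$, corner equivalence $A_i\sim A$, and the induced identifications on $\underline K$ and on traces) mutually compatible, and in particular checking carefully that the unitality of $\kappa_i$ is genuinely equivalent to the stated compatibility hypothesis $\lambda(\tau)(p_i)=\tau(K_0(\kappa)([p_i]))$ — everything else is a direct citation of Theorem \ref{NWexist}, Lemma \ref{KL+1}, and Remark \ref{fourclasses}.
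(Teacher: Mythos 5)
Your overall architecture (split $C=\bigoplus_i C_i$, realize $K_0(\kappa)([e_i])$ by orthogonal projections $e_i'\in A$ summing to $1$, work in the corners $A_i=e_i'Ae_i'$ with normalized traces, then reassemble) is the same as the paper's final step, and that part is fine. The genuine gap is in the single-summand step, at the phrase ``Theorem \ref{NWexist} now produces unital monomorphisms $\phi_i:C(X_i)\to A_i$, equivalently $\phi_i:C_i\to A_i$.'' A corner $C_i=p_i(C(X_i)\otimes M_{k_i})p_i$ is Morita equivalent to $C(X_i)$, but in general it is not isomorphic to $C(X_i)$ (nor to any $C(X_i)\otimes M_m$): it is the section algebra of the endomorphism bundle of a possibly nontrivial vector bundle. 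Morita equivalence transports $KL$-classes and tracial data, but it does not convert a unital homomorphism out of $C(X_i)$ into one out of $C_i$; that conversion is precisely the nontrivial content here, not bookkeeping. Relatedly, if you transport $\kappa_i$ across the identification $KL(C_i,A_i)\cong KL(C(X_i),A_i)$, the resulting element need not be \emph{unital} for the pair $(C(X_i),A_i)$ — unitality of $\kappa_i$ says $K_0(\kappa_i)([p_i])=[1_{A_i}]$, which says nothing about where $[1_{C(X_i)}]$ goes — so Theorem \ref{NWexist} cannot be invoked with target $A_i$ as you propose.

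The paper closes this gap with an amplification-and-cut-down argument that your proposal is missing: choose $l$ and a projection $q\in C_i\otimes M_l\subset C(X_i)\otimes M_{k_il}$ such that $p_i\otimes e\leq q$ ($e\in M_l$ a minimal projection) and $q$ is Murray--von Neumann equivalent to $1_{C(X_i)}\otimes r$ with $r$ of rank $k_i$, so that $C_0:=q(C_i\otimes M_l)q\cong C(X_i)\otimes M_{k_i}$; pick $\tilde q\in A_i\otimes M_l$ with $[\tilde q]=K_0(\kappa)([q])$ and apply the matrix-algebra case of Theorem \ref{NWexist} to get a unital monomorphism $\phi:C_0\to A_0:=\tilde q(A_i\otimes M_l)\tilde q$ realizing the transported data; finally use $[\phi(p_i\otimes e)]=K_0(\kappa)([p_i\otimes e])=[1_{A_i}\otimes e]$ together with cancellation (stable rank one) to conjugate $\phi(p_i\otimes e)$ onto $1_{A_i}\otimes e$, and restrict $\Ad u\circ\phi$ to $(p_i\otimes e)C_0(p_i\otimes e)\cong C_i$. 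This is where the compatibility hypothesis and the trace identifications actually get used, and without some version of it your construction never produces a homomorphism defined on $C_i$ itself.
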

\begin{proof}
It is clear that 
the case $C=C(X)\otimes M_k$ follows immediately from Theorem \ref{NWexist}. 
Let us consider the case $C=p(C(X)\otimes M_k)p$, 
where $X$ is a path connected compact metrizable space. 
Let $m\in\N$ be the rank of $p$. 
There exist $l\in\N$ and 
a projection $q\in C\otimes M_l\subset C(X)\otimes M_{kl}$ such that 
$p\otimes e$ is a subprojection of $q$ and $q$ is 
Murray-von Neumann equivalent to $1_{C(X)}\otimes r$, 
where $e\in M_l$ is a minimal projection of $M_l$ and 
$r\in M_{kl}$ is a projection of rank $k$. 
We can find a projection $\tilde q\in A\otimes M_l$ 
such that $K_0(\kappa)([q])=[\tilde q]$. 
Set $C_0=q(C\otimes M_l)q\cong C(X)\otimes M_k$ and 
$A_0=\tilde q(A\otimes M_l)\tilde q$. 
For any tracial state $\tau\in T(A)$, 
$mk^{-1}(\tau\otimes\Tr)$ gives a tracial state on $A_0$, 
and this correspondence induces a homeomorphism 
between $T(A)$ and $T(A_0)$. 
Likewise there exists a natural homeomorphism 
between $T(C)$ and $T(C_0)$. 
The identifications 
\[
KL(C,A)_{+,1}\cong KL(C_0,A_0)_{+,1},\quad 
T(A)\cong T(A_0)\quad\text{and}\quad T(C)\cong T(C_0)
\]
allow us to regard $\kappa$ as an element of $KL(C_0,A_0)_{+,1}$ 
and $\lambda$ as an affine continuous map from $T(A_0)$ to $T(C_0)$. 
Therefore the previous case shows that 
there exists $\phi:C_0\to A_0$ realizing $\kappa$ and $\lambda$. 
From $[\phi(p\otimes e)]=K_0(\kappa)([p\otimes e])=[1_A\otimes e]$, 
there exists a unitary $u\in A\otimes M_l$ 
such that $u\phi(p\otimes e)u^*=1_A\otimes e$. 
The restriction of $\Ad u\circ\phi$ to $(p\otimes e)C_0(p\otimes e)$ 
gives a desired unital monomorphism from $C$ to $A$. 

We now turn to the general case. 
Let $C=\bigoplus_{i=1}^np_i(C(X_i)\otimes M_{k_i})p_i$, 
$\kappa$ and $\lambda$ be as in the statement. 
Let $\gamma_i:p_i(C(X_i)\otimes M_{k_i})p_i\to C$ be the canonical embedding. 
Choose projections $\tilde p_1,\tilde p_2,\dots,\tilde p_n\in A$ so that 
$K_0(\kappa)([p_i])=[\tilde p_i]$ and 
$\tilde p_1+\tilde p_2+\dots+\tilde p_n=1$. 
For each $i=1,2,\dots,n$, $\kappa\circ KL(\gamma_i)$ is regarded 
as an element of $KL(p_iC,\tilde p_iA\tilde p_i)_{+,1}$. 
For $\tau\in T(A)$, the restriction of $\tau/\tau(\tilde p_i)$ 
to $\tilde p_iA\tilde p_i$ is a tracial state. 
Similarly the restriction of $\lambda(\tau)/\tau(\tilde p_i)$ to $p_iC$ 
is also a tracial state, 
because $\lambda(\tau)(p_i)=\tau(K_0(\kappa)([p_i]))=\tau(\tilde p_i)$. 
It is not so hard to check that 
\[
\lambda_i:\tau/\tau(\tilde p_i)\mapsto\lambda(\tau)/\tau(\tilde p_i)
\]
gives rise to an affine continuous map 
from $T(\tilde p_iA\tilde p_i)$ to $T(p_iC)$. 
We have already shown that 
$\kappa\circ KL(\gamma_i)$ and $\lambda_i$ are realized 
by a unital monomorphism $\phi_i:p_iC\to\tilde p_iA\tilde p_i$. 
Then $\phi=\phi_1+\phi_2+\dots+\phi_n$ is a unital monomorphism 
from $C$ to $A$ satisfying $KL(\phi)=\kappa$ and $T(\phi)=\lambda$. 
\end{proof}

\section{Determinants of unitaries}

In this section, we would like to introduce a homomorphism 
\[
\Theta_{\phi,\psi}:K_1(C)\to\Aff(T(A))/\Ima D_A, 
\]
which plays an important role in the main theorems of this paper 
(Theorem \ref{main}, Corollary \ref{mainAH}, Theorem \ref{main2}). 

Let $A$ be a unital $C^*$-algebra. 
For $\tau\in T(A)$, 
the de la Harpe-Skandalis determinant (\cite{HS}) associated with $\tau$ is 
written by 
\[
\Delta_\tau:U_\infty(A)_0\to\R/D_A(K_0(A))(\tau). 
\] 
It is well-known that 
$\Delta_A(u)(\tau)=\Delta_\tau(u)$ gives a homomorphism 
\[
\Delta_A:U_\infty(A)_0\to\Aff(T(A))/\Ima D_A. 
\]
Let $C,A$ be unital $C^*$-algebras and 
let $\phi,\psi:C\to A$ be unital homomorphisms 
satisfying $K_1(\phi)=K_1(\psi)$ and $T(\phi)=T(\psi)$. 
In what follows, we use the same notation $\phi,\psi$ for the homomorphisms 
from $C\otimes M_n$ to $A\otimes M_n$ induced by $\phi,\psi$. 
For $u\in U_\infty(C)$, 
we can consider $\Delta_A(\phi(u^*)\psi(u))$, 
as $\phi(u^*)\psi(u)$ belongs to $U_\infty(A)_0$. 

\begin{lem}\label{Theta}
In the setting above, we have the following. 
\begin{enumerate}
\item There exists a homomorphism 
\[
\Theta_{\phi,\psi}:K_1(C)\to\Aff(T(A))/\Ima D_A
\]
such that $\Theta_{\phi,\psi}([u])=\Delta_A(\phi(u^*)\psi(u))$ 
for any $u\in U_\infty(C)$. 
\item For any $w\in U(A)$, 
$\Theta_{\phi,\Ad w\circ\psi}=\Theta_{\phi,\psi}$. 
\item If $\phi$ and $\psi$ are approximately unitarily equivalent, 
then $\Ima\Theta_{\phi,\psi}\subset\overline{\Ima D_A}$. 
\item If $C$ satisfies the UCT and $KL(\phi)$ equals $KL(\psi)$, 
then the homomorphism $\Theta_{\phi,\psi}$ 
factors through $K_1(C)/\Tor(K_1(C))$. 
\end{enumerate}
\end{lem}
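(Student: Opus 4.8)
The plan is to verify the four assertions in order, since each builds on the previous one. For (1), the key point is that $u\mapsto\Delta_A(\phi(u^*)\psi(u))$ is a well-defined homomorphism on $U_\infty(C)$ that kills $U_\infty(C)_0$, so it descends to $K_1(C)$. That it is a homomorphism follows from the fact that $\Delta_A$ is a homomorphism together with the computation $\phi((uv)^*)\psi(uv)=\phi(v^*)\bigl(\phi(u^*)\psi(u)\bigr)\psi(v)$, and since $\phi(u^*)\psi(u)\in U_\infty(A)_0$ the conjugating unitaries $\phi(v),\psi(v)$ may be connected to $1$ inside $A\otimes M_n$ up to a $K_0$-ambiguity — but $\Delta_A$ already takes values in $\Aff(T(A))/\Ima D_A$, so this ambiguity is invisible and one gets additivity on the nose. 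To see it vanishes on $U_\infty(C)_0$: if $u\in U(C\otimes M_n)_0$ then $\phi(u^*)\psi(u)$ is a product of two paths of unitaries starting at $1$, hence the de la Harpe--Skandalis determinant is computed by integrating $T(\phi)=T(\psi)$ against the respective logarithmic derivatives, and these cancel. Thus $\Theta_{\phi,\psi}$ is well-defined on $K_1(C)=U_\infty(C)/U_\infty(C)_0$.

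For (2), replace $\psi$ by $\Ad w\circ\psi$ for a fixed $w\in U(A)$. Then $\phi(u^*)(w\psi(u)w^*)=\phi(u^*)\psi(u)\cdot\psi(u)^*\cdot w\psi(u)w^*$, and the element $\psi(u)^*w\psi(u)w^*$ is a multiplicative commutator of unitaries; a standard property of the de la Harpe--Skandalis determinant is that it annihilates commutators (its image lies in an abelian group and the determinant of $aba^{-1}b^{-1}$ is zero). Hence $\Delta_A$ of the perturbed element equals $\Delta_A(\phi(u^*)\psi(u))$, giving $\Theta_{\phi,\Ad w\circ\psi}=\Theta_{\phi,\psi}$. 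For (3), suppose $\phi=\lim\Ad u_n\circ\psi$ pointwise. Fix $u\in U(C\otimes M_k)$ representing a class in $K_1(C)$; for large $n$, $\phi(u^*)\psi(u)$ is close to $\phi(u^*)u_n^*\phi(u)u_n$, which is a product of a commutator-type term and elements connected to $1$ — more precisely $\phi(u^*)\psi(u)$ is approximated in norm by unitaries whose de la Harpe--Skandalis determinant lies arbitrarily close to $\Ima D_A$ in $\Aff(T(A))$. Since $\Delta_A$ is continuous in the appropriate sense on the path component of $1$, passing to the limit forces $\Theta_{\phi,\psi}([u])\in\overline{\Ima D_A}$.

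For (4), assume $C$ satisfies the UCT and $KL(\phi)=KL(\psi)$; we must show $\Theta_{\phi,\psi}$ vanishes on $\Tor(K_1(C))$. Let $x\in K_1(C)$ have order $m$, so $mx=0$ and $x$ is detected by some class in $K_1(C;\Z_m)$ via the Bockstein sequence; concretely, choose a unitary $u\in U(C\otimes M_k)$ with $[u]=x$ and a unitary $v\in U(C\otimes M_l)$ together with a relation exhibiting $u^{\oplus m}$ as connected to $1$ after stabilization, or better, work directly: since $mx=0$, the unitary $u^{\oplus m}=u\otimes 1_m$ lies in $U_\infty(C)_0$, and the equality $KL(\phi)=KL(\psi)$ — in particular equality of the mod-$m$ components $K_1(\phi;\Z_m)=K_1(\psi;\Z_m)$ — provides a unitary path in $A\otimes M_{km}\otimes\mathcal{O}_{m+1}$ (or a controlled lift) connecting $\phi(u^{\oplus m})$ and $\psi(u^{\oplus m})$ in a way compatible with the torsion data. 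Integrating the determinant along this path and using $T(\phi)=T(\psi)$, one gets $m\,\Theta_{\phi,\psi}(x)=\Theta_{\phi,\psi}(mx)=0$ in $\Aff(T(A))/\Ima D_A$; but $\Aff(T(A))$ is a real vector space, so $\Aff(T(A))/\Ima D_A$ is divisible and — crucially — one shows it is actually torsion-free here because $\Ima D_A$ is a subgroup with torsion-free quotient (as $D_A$ factors through the real vector space $\Aff(T(A))$ and $K_0(A)$ maps into it). Hence $\Theta_{\phi,\psi}(x)=0$, so $\Theta_{\phi,\psi}$ factors through $K_1(C)/\Tor(K_1(C))$.

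The main obstacle I anticipate is part (4): making precise how the equality of the full $KL$-invariant (rather than just $K_1$) is used to control the determinant on torsion classes. The determinant only sees the connected component of $1$, so one must produce an explicit unitary path realizing the mod-$m$ coincidence of $\phi$ and $\psi$ and argue that its de la Harpe--Skandalis determinant is computable from $T(\phi)=T(\psi)$ alone; alternatively, one can sidestep the path by the purely algebraic observation that $\Aff(T(A))/\Ima D_A$ is torsion-free (since it is a quotient of a $\Q$-vector space by a subgroup, or directly because any $a\in\Aff(T(A))$ with $ma\in\Ima D_A$ satisfies $a\in\Ima D_A$ once one knows $K_0(A)\to\Aff(T(A))$ has torsion-free, in fact divisible-compatible, image — using weak unperforation of $K_0(A)$ in the relevant cases), which makes the divisibility argument above clean. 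I would present the torsion-freeness route as the primary one, falling back on the path construction only if $K_0(A)$'s order structure is not available at this generality.
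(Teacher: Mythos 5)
Parts (1)--(3) of your proposal are essentially sound and close to the paper's argument (the paper proves well-definedness via a closed-loop/cancellation computation using $T(\phi)=T(\psi)$ and gets additivity from $\diag(uv,1)\sim\diag(u,v)$; your conjugation-invariance route is an acceptable variant). The genuine gap is in (4). Your primary route rests on the claim that $\Aff(T(A))/\Ima D_A$ is torsion-free, and this is simply false in general: already for $A=\mathcal{Z}$ one has $\Aff(T(A))=\R$ and $\Ima D_A=\Z$, so the quotient is $\R/\Z$, which is full of torsion; weak unperforation of $K_0(A)$ does not make $\Ima D_A$ divisible, which is what torsion-freeness of the quotient would require. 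Consequently, everything your argument actually establishes is $m\,\Theta_{\phi,\psi}(x)=\Theta_{\phi,\psi}(mx)=0$, which already follows from (1) alone and carries no information: the whole content of (4) is to upgrade ``$\Theta_{\phi,\psi}(x)$ is $m$-torsion in $\Aff(T(A))/\Ima D_A$'' to ``$\Theta_{\phi,\psi}(x)=0$''. Your fallback ``path realizing the mod-$m$ coincidence'' never identifies the mechanism by which $KL(\phi)=KL(\psi)$ (as opposed to $K_*(\phi)=K_*(\psi)$) enters. The paper's mechanism is: form the mapping torus $M_{\phi,\psi}$, get the extension $0\to K_0(A)\to K_1(M_{\phi,\psi})\to K_1(C)\to 0$, observe that $KL(\phi)=KL(\psi)$ forces this extension to be \emph{pure} (this is exactly the $\Pext$ part of the universal multicoefficient theorem, and is where the UCT is used), so any torsion element of $K_1(C)$ lifts to a torsion element of $K_1(M_{\phi,\psi})$; then the rotation map $R_{\phi,\psi}:K_1(M_{\phi,\psi})\to\Aff(T(A))$, which lifts $\Theta_{\phi,\psi}\circ K_1(\pi)$ and takes values in the honestly torsion-free group $\Aff(T(A))$ (not in the quotient), must vanish on that lift. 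Without purity and the rotation map, your argument does not close.

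A smaller point: in (2) you justify $\Delta_A(\psi(u)^*w\psi(u)w^*)=0$ by ``a homomorphism into an abelian group kills commutators''. That reasoning does not directly apply, because $\Delta_A$ is defined only on $U_\infty(A)_0$ and the factors $\psi(u)^*$ and $w$ need not lie there. The statement itself is true and can be repaired: writing $\diag(aba^*b^*,1)=\bigl[\diag(a,a^*),\diag(b,1)\bigr]$ with $\diag(a,a^*)\in U_\infty(A)_0$, one gets $\Delta_A([a,b])=\Delta_A(\diag(a,a^*))+\Delta_A\bigl(\diag(b,1)\diag(a^*,a)\diag(b,1)^*\bigr)=0$ using conjugation invariance of $\Delta_A$; alternatively one argues as in Kishimoto--Kumjian, which is what the paper cites for (2). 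Your (3) is fine once (1) and (2) are in place, and matches the paper's one-line deduction.
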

\begin{proof}
(1) We first show that 
$\Delta_A(\phi(u^*)\psi(u))$ equals $\Delta_A(\phi(v^*)\psi(v))$ 
when $u,v\in U_\infty(C)$ satisfy $uv^*\in U_\infty(C)_0$. 
We can find $n\in\N$ and 
piecewise smooth paths of unitaries 
$x:[0,1]\to U(A\otimes M_n)$, $y:[0,1]\to U(A\otimes M_n)$ and 
$z:[0,1]\to U(C\otimes M_n)$ such that 
$x(0)=\phi(u)$, $x(1)=\psi(u)$, 
$y(0)=\phi(v)$, $y(1)=\psi(v)$ and $z(0)=u$, $z(1)=v$. 
Define $h:[0,1]\to U(A\otimes M_n)$ by 
\[
h(t)=\begin{cases}x(4t)&1\leq t\leq1/4\\
\psi(z(4t-1))&1/4\leq t\leq1/2\\
y(3-4t)&1/2\leq t\leq3/4\\
\phi(z(4-4t))&3/4\leq t\leq1. \end{cases}
\]
Since $h$ is a closed path of unitaries, one has 
\[
\frac{1}{2\pi\sqrt{-1}}\int_0^1(\tau\otimes\Tr)(\dot{h}(t)h(t)^*)\,dt
\in D_A(K_0(A))(\tau)
\]
for any $\tau\in T(A)$. 
It is easy to see that 
the contribution from $t\mapsto\psi(z(4t-1))$ and $t\mapsto\phi(z(4-4t))$ 
cancels out, because of $T(\phi)=T(\psi)$. 
It follows that 
\[
\frac{1}{2\pi\sqrt{-1}}\left(\int_0^1(\tau\otimes\Tr)(\dot{x}(t)x(t)^*)\,dt
-\int_0^1(\tau\otimes\Tr)(\dot{y}(t)y(t)^*)\,dt\right)
\in D_A(K_0(A))(\tau)
\]
for any $\tau\in T(A)$, 
which implies $\Delta_A(\phi(u^*)\psi(u))=\Delta_A(\phi(v^*)\psi(v))$. 
It follows that 
$\Theta_{\phi,\psi}:K_1(C)\to\Aff(T(A))/\Ima D_A$ is well-defined as a map 
by $\Theta_{\phi,\psi}([u])=\Delta_A(\phi(u^*)\psi(u))$ 
for any $u\in U_\infty(C)$. 

For any $u,v\in U_\infty(C)$, 
$\diag(uv,1)$ is homotopic to $\diag(u,v)$, and so 
\begin{align*}
\Delta_A(\diag(\phi(uv)^*\psi(uv),1))
&=\Delta_A(\diag(\phi(u)^*\psi(u),\phi(v)^*\psi(v))) \\
&=\Delta_A(\phi(u^*)\psi(u))+\Delta_A(\phi(v^*)\psi(v)). 
\end{align*}
Hence we can conclude that $\Theta_{\phi,\psi}$ is a homomorphism. 

(2) can be shown in a similar fashion to the proof of 
(ii)$\Rightarrow$(i) of \cite[Theorem 3.1]{KK}. 
We leave the details to the reader. 

(3) follows from (1) and (2). 

(4) Let 
\[
M_{\phi,\psi}=\{f\in C([0,1],A)\mid f(0)=\phi(c),\ f(1)=\psi(c)\quad 
\text{for some }c\in C\}
\]
be the mapping torus of $\phi,\psi:C\to A$. 
Since $K_i(\phi)=K_i(\psi)$ for $i=0,1$, 
from the short exact sequence 
\[
\begin{CD}
0@>>>SA@>>>M_{\phi,\psi}@>\pi>>C@>>>0
\end{CD}
\]
of $C^*$-algebras, we obtain the following short exact sequence 
of abelian groups: 
\[
\begin{CD}
0@>>>K_0(A)@>>>K_1(M_{\phi,\psi})@>K_1(\pi)>>K_1(C)@>>>0. 
\end{CD}
\]
By $KL(\phi)=KL(\psi)$, this exact sequence is pure 
(see Theorem \ref{multiUCT}). 
Thus, the quotient map $K_1(\pi)$ has a right inverse 
on any finitely generated subgroup of $K_1(C)$. 
Let $R_{\phi,\psi}:K_1(M_{\phi,\psi})\to\Aff(T(A))$ be the rotation map 
introduced in \cite[Section 2]{L0612} (see also \cite[Section 1]{KK}). 
It is easy to verify that 
\[
R_{\phi,\psi}(x)+\Ima D_A=\Theta_{\phi,\psi}(K_1(\pi)(x)) 
\]
holds for every $x\in K_1(M_{\phi,\psi})$. 
Therefore $\Theta_{\phi,\psi}$ kills torsion of $K_1(A)$, 
because $\Aff(T(A))$ is torsion free. 
In other words, $\Theta_{\phi,\psi}$ factors through $K_1(C)/\Tor(K_1(C))$. 
\end{proof}

\section{$C^*$-algebras of real rank zero}

In this section we give a classification result of 
unital monomorphisms from $C(X)$ to a $C^*$-algebra 
in $\mathcal{T}\cup\mathcal{T}'$ (Theorem \ref{rr0unique}). 
We begin with the following lemma, 
which is a variant of \cite[Lemma 2.2]{GL00ActaSin}. 
A similar argument is also found in \cite[Lemma 6.2.7]{Lintext}. 

\begin{lem}\label{matrix}
Let $X$ be a compact metrizable space. 
For any finite subset $F\subset C(X)$ and $\ep>0$, 
there exist a finite subset $G\subset C(X)$ and $\delta>0$ 
such that the following hold. 
Let $\phi$ and $\psi$ be $(G,\delta)$-multiplicative ucp maps 
from $C(X)$ to $M_n$ such that 
\[
\lvert\tr(\phi(f))-\tr(\psi(f))\rvert<\delta\quad \forall f\in G. 
\]
Then there exist a projection $p\in M_n$, 
$(F,\ep)$-multiplicative ucp maps $\phi',\psi':C(X)\to pM_np$ and 
a unital homomorphism $\sigma:C(X)\to(1{-}p)M_n(1{-}p)$ such that 
$\phi\sim_{F,\ep}\phi'\oplus\sigma$, $\psi\sim_{F,\ep}\psi'\oplus\sigma$ 
and $\tr(p)<\ep$. 
\end{lem}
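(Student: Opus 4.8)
The plan is to reduce the problem to a finite-dimensional combinatorial statement about spectral projections. First I would fix a finite $\ep/2$-dense subset $\{x_1,\dots,x_m\}\subset X$ and choose a partition of $X$ into disjoint Borel sets $E_1,\dots,E_m$ with $x_j\in E_j$ and $\operatorname{diam}(E_j)<\ep/2$ in a compatible metric. The associated characteristic functions $\chi_{E_j}$ are not continuous, but if $G$ is large enough (containing sufficiently many functions that ``see'' the sets $E_j$, e.g. functions supported near each $x_j$) and $\delta$ is small enough, then for any $(G,\delta)$-multiplicative ucp map $\phi:C(X)\to M_n$ one can build, via functional calculus on $\phi$ of suitable continuous approximants to $\chi_{E_j}$, a genuine partition of unity by mutually orthogonal projections $q_1^\phi,\dots,q_m^\phi\in M_n$ summing to $1$, together with points $y_j\in E_j$, such that $\phi$ is $(F,\ep/4)$-close to $\sum_j\mathrm{ev}_{y_j}(\,\cdot\,)q_j^\phi$; this is the standard ``spectral approximation of a ucp map from $C(X)$ to a matrix algebra'' argument and is where most of the bookkeeping (choice of $G$, $\delta$) lives. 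Do the same for $\psi$, obtaining projections $q_j^\psi$ and points $z_j\in E_j$.

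Next I would exploit the trace condition. After the spectral approximation, $\operatorname{tr}(q_j^\phi)$ is approximately $\operatorname{tr}(\phi(f_j))$ for a suitable $f_j\in G$ (a bump function concentrated on $E_j$), and similarly for $\psi$; hence $|\operatorname{tr}(q_j^\phi)-\operatorname{tr}(q_j^\psi)|$ is small, say less than $\eta$, for every $j$, provided $\delta$ was chosen small enough relative to $\eta$ and $m$. Set $r_j=\min\{\operatorname{Tr}(q_j^\phi),\operatorname{Tr}(q_j^\psi)\}$ (an integer) and pick subprojections $q_j'^\phi\le q_j^\phi$, $q_j'^\psi\le q_j^\psi$ of rank $r_j$. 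Let $e=\sum_j q_j'^\phi$ viewed inside the ``$\phi$-picture'' and $e'=\sum_j q_j'^\psi$ in the ``$\psi$-picture''; these have the same rank $\sum_j r_j$, while $\operatorname{Tr}(1-e)=\sum_j(\operatorname{Tr}(q_j^\phi)-r_j)\le\sum_j|\operatorname{Tr}(q_j^\phi)-\operatorname{Tr}(q_j^\psi)|<m\eta n$, so $\operatorname{tr}(1-e)<m\eta$; choosing $\eta<\ep/m$ makes this $<\ep$. Since $\sum_j\operatorname{Tr}(q_j'^\phi)=\sum_j\operatorname{Tr}(q_j'^\psi)$, there is a unitary $w\in M_n$ conjugating $\sum_j\mathrm{ev}_{y_j}q_j'^\phi$ onto a map of the form $\sum_j\mathrm{ev}_{y_j}\tilde q_j$ with $\tilde q_j\le q_j^\psi$ of rank $r_j$; absorbing $w$, I may assume the ``common part'' sits on matching subprojections. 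On the common rank-$r_j$ block, replacing the point $y_j$ by $z_j$ costs at most $\ep/4$ on $F$ because both lie in $E_j$ and $\operatorname{diam}(E_j)<\ep/2$; so define the unital homomorphism $\sigma:C(X)\to(1-p)M_n(1-p)$ (with $1-p$ the common part) by $\sigma=\sum_j\mathrm{ev}_{z_j}\otimes(\text{identity on the }r_j\text{-block})$, and let $p$ be the complementary projection, $\operatorname{tr}(p)<\ep$.

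Finally, define $\phi'$ and $\psi'$ to be the ``leftover'' pieces: $\phi'$ is the cut-down of (the unitarily adjusted) $\phi$ by $p$ and $\psi'$ the cut-down of $\psi$ by $p$, each an $(F,\ep)$-multiplicative ucp map into $pM_np$ once $G\supset F$ and $\delta$ are taken small enough that cutting a ucp map by a projection which nearly commutes with its image stays almost multiplicative. By construction $\phi\sim_{F,\ep}\phi'\oplus\sigma$ and $\psi\sim_{F,\ep}\psi'\oplus\sigma$ after a single overall unitary. The main obstacle is the first step: organizing the choice of $G$ and $\delta$ so that the spectral projections $q_j^\phi$, $q_j^\psi$ are honestly defined (orthogonal, summing to $1$) and the trace comparison $|\operatorname{tr}(q_j^\phi)-\operatorname{tr}(q_j^\psi)|<\eta$ follows purely from the hypothesis $|\operatorname{tr}(\phi(f))-\operatorname{tr}(\psi(f))|<\delta$ on $G$; once the partition-of-unity picture is set up cleanly, the rank-matching and the point-perturbation are elementary. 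I would quote \cite[Lemma 2.2]{GL00ActaSin} or \cite[Lemma 6.2.7]{Lintext} for the technical heart of this first step rather than redo it from scratch.
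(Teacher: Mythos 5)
There is a genuine gap, and it sits exactly where you placed ``most of the bookkeeping'': your first step claims that for $G$ large and $\delta$ small, \emph{every} $(G,\delta)$-multiplicative ucp map $\phi:C(X)\to M_n$ is $(F,\ep/4)$-close to a unital homomorphism with finite-dimensional range $\sum_j f(y_j)q_j^\phi$. This is false in general. For $X=\T^2$, Voiculescu's almost commuting unitaries give, for any prescribed $(G,\delta)$ and suitable $n$, a $(G,\delta)$-multiplicative ucp map whose associated Bott element is nonzero; such a map cannot be close on a reasonable finite set $F$ to \emph{any} homomorphism $C(\T^2)\to M_n$, since closeness would force the Bott element to vanish. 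If your step were true, the lemma would hold trivially with $p=0$ and without any trace hypothesis, and the $K$-theoretic hypotheses in results like Theorem \ref{GongLin} would be pointless. The references you propose to quote (\cite[Lemma 2.2]{GL00ActaSin}, \cite[Lemma 6.2.7]{Lintext}) do not assert such an approximation either; they are statements of the same ``split off a large homomorphism summand, keep a small almost multiplicative leftover'' type as Lemma \ref{matrix} itself. The whole point of the lemma is that the leftover pieces $\phi',\psi'$ (small only in trace, not in norm) must be retained precisely because they carry the obstruction to approximating $\phi,\psi$ by finite-dimensional homomorphisms.

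There is also a quantifier problem even when no $K$-theory obstruction is present: the partition $E_1,\dots,E_m$ and the bump functions feeding into $G$ must be fixed \emph{before} $\phi,\psi$ are given, but then the measures $\tr\circ\phi$, $\tr\circ\psi$ can concentrate near the boundaries of the $E_j$, so the ``spectral'' projections $q_j^\phi$ built from $\phi$ of continuous approximants to $\chi_{E_j}$ need not have usable spectral gaps, need not almost commute with $\phi(F)$, and need not sum to anything close to $1$ with controlled trace. The paper circumvents both difficulties by arguing by contradiction in an ultraproduct $\prod M_{m_n}/\bigoplus_\omega M_{m_n}$: there the maps become exact homomorphisms sharing a single trace $\tau$, the partition is chosen \emph{after} the fact so that the limiting measure gives null mass to the boundaries, real rank zero of the ultraproduct supplies projections $p_i$, $q_i$ in the relevant hereditary subalgebras with $\tau(p_i)\approx\mu(W_i)$, and equal-trace subprojections are conjugated by one unitary before lifting back to matrices. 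Your later steps (matching ranks using the trace hypothesis, conjugating the common part, moving evaluation points within a set of small oscillation, cutting the remainder by $p$) mirror the paper's endgame and are fine, but they rest on the false full approximation, so the proof as proposed does not go through.
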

\begin{proof}
Suppose that we are given a finite subset $F\subset C(X)$ and $\ep>0$. 
We may assume that elements of $F$ are of norm one. 
The proof is by contradiction. 
If the lemma was false, then 
we would have a sequence of pairs of ucp maps $\phi_n$ and $\psi_n$ 
from $C(X)$ to $M_{m_n}$ such that 
\[
\lVert\phi_n(fg)-\phi_n(f)\phi_n(g)\rVert\to0,\quad 
\lVert\psi_n(fg)-\psi_n(f)\psi_n(g)\rVert\to0
\]
and 
\[
\lvert\tr(\phi_n(f))-\tr(\psi_n(f))\rvert\to0
\]
as $n\to\infty$ for any $f,g\in C(X)$, 
and the conclusion of the lemma does not hold for any $\phi_n,\psi_n$. 
Let $\omega\in\beta\N\setminus\N$ be a free ultrafilter on $\N$. 
Define 
\[
\bigoplus_\omega M_{m_n}=\left\{(a_n)_n\in\prod M_{m_n}\mid
\lim_{n\to\omega}\lVert a_n\rVert=0\right\}. 
\]
We set $A=\prod M_{m_n}/\bigoplus_\omega M_{m_n}$ and 
let $\pi:\prod M_{m_n}\to A$ be the quotient map. 
Define ucp maps $\tilde\phi$ and $\tilde\psi$ from $C(X)$ to $\prod M_{m_n}$ 
by $\tilde\phi(f)=(\phi_n(f))_n$ and $\tilde\psi(f)=(\psi_n(f))_n$ 
for $f\in C(X)$. 
Clearly $\pi\circ\tilde\phi$ and $\pi\circ\tilde\psi$ are 
unital homomorphisms from $C(X)$ to $A$. 
One can define a tracial state $\tau\in T(A)$ by 
\[
\tau(\pi((a_n)_n))=\lim_{n\to\omega}\tr(a_n)
\]
for $(a_n)_n\in\prod M_{m_n}$. 
Then we have $\tau\circ\pi\circ\tilde\phi=\tau\circ\pi\circ\tilde\psi$. 
Let $\mu$ be the probability measure on $X$ 
corresponding to $\tau\circ\pi\circ\tilde\phi=\tau\circ\pi\circ\tilde\psi$. 

Any $x\in X$ has an open neighborhood $U_x$ such that 
$\mu(\overline{U_x}\setminus U_x)=0$ and 
$\lvert f(y)-f(y')\rvert<\ep/3$ for any $y,y'\in U_x$ and $f\in F$. 
(Such $U_x$ exists by the following reason. 
Let $d(\cdot,\cdot)$ be a metric compatible with the topology of $X$ and 
let $C_r=\{y\in X\mid d(x,y){=}r\}$ for $r>0$. 
There exist only countably many $r$ such that $\mu(C_r)>0$, 
because $\mu$ is a probability measure. 
Hence it is easy to find $r>0$ so that 
$\mu(C_r)=0$ and $\lvert f(x)-f(y)\rvert<\ep/6$ 
for any $y\in X$ with $d(x,y)<r$ and $f\in F$. 
Then $U_x=\{y\in X\mid d(x,y)<r\}$ meets the requirement.) 
Since $X$ is compact, we can find $x_1,x_2,\dots,x_k\in X$ such that 
$U_{x_1}\cup\dots\cup U_{x_k}=X$. 
Consider open subsets of the form $W=V_1\cap V_2\cap\dots\cap V_k$ 
satisfying $\mu(W)>0$, 
where $V_i$ is either $U_{x_i}$ or $X\setminus\overline{U_{x_i}}$. 
Let $W_1,W_2,\dots,W_l$ be these open subsets. 
Evidently $W_i$'s are pairwise disjoint. 
Then we have 
\[
\mu(X\setminus(W_1\cup W_2\cup\dots\cup W_l))=0
\]
and $\lvert f(y)-f(y')\rvert<\ep/3$ for any $y,y'\in W_i$ and $f\in F$. 
Choose $z_i\in W_i$ for each $i=1,2,\dots,l$. 
The $C^*$-algebra $\prod M_{m_n}$ has real rank zero and so does $A$. 
Accordingly, 
the hereditary subalgebra of $A$ generated by $\pi(\tilde\phi(C_0(W_i)))$ 
contains an approximate unit consisting of projections. 
It follows that there exists a projection 
\[
p_i\in\overline{\pi(\tilde\phi(C_0(W_i)))A\pi(\tilde\phi(C_0(W_i)))}
\]
satisfying $\tau(p_i)>\mu(W_i)-\ep/l$. 
It is easy to see that 
$\lVert\pi(\tilde\phi(f))p_i-f(z_i)p_i\rVert<\ep/3$ holds for any $f\in F$. 
Extend $\pi\circ\tilde\phi$ to a unital homomorphism 
from $C(X)^{**}$ to $A^{**}$ 
and define $\bar p_i=\pi(\tilde\phi(1_{W_i}))$. 
Then $\bar p_i$ commutes with $\pi(\tilde\phi(C(X)))$ and $p_i\leq\bar p_i$. 
Similarly one can find projections $q_i$ 
in the hereditary subalgebra generated by $\pi(\tilde\psi(C_0(W_i)))$ 
and $\bar q_i$ in $A^{**}\cap\pi(\tilde\psi(C(X)))'$ 
satisfying analogous properties. 

It is not so hard to find projections $p'_i\leq p_i$, $q'_i\leq q_i$ 
and a unitary $u\in A$ such that $p'_i=uq'_iu^*$ and 
\[
\tau(p'_i)=\tau(q'_i)=\min\{\tau(p_i),\tau(q_i)\}
\]
for any $i=1,2,\dots,l$. 
Set $p=1-(p'_1+p'_2+\dots+p'_l)$ and $q=1-(q'_1+q'_2+\dots+q'_l)$. 
We have 
\[
\tau(p)=1-\sum_{i=1}^l\tau(p'_i)<1-\sum_{i=1}^l(\mu(W_i)-\ep/l)=\ep. 
\]
Moreover, 
\begin{align*}
(1-p)\pi(\tilde\phi(f))
&=\sum_{i=1}^lp'_i\pi(\tilde\phi(f))
=\sum_{i=1}^lp'_i\bar p_i\pi(\tilde\phi(f))
=\sum_{i=1}^lp'_i\pi(\tilde\phi(f))\bar p_i \\
&\approx_{\ep/3}\sum_{i=1}^lf(z_i)p'_i\bar p_i
=\sum_{i=1}^lf(z_i)p'_i
\end{align*}
holds for any $f\in F$. 
Similarly one has 
$\lVert(1-q)\pi(\tilde\psi(f))-\sum_{i=1}^lf(z_i)q'_i\rVert<\ep/3$ 
for any $f\in F$. 
It is well-known that the projections $p'_i$, $p$ in $A$ lift to 
projections $(p'_{i,n})_n$, $(p_n)_n$ in $\prod M_{m_n}$ satisfying 
$p_n+p'_{1,n}+\dots+p'_{l,n}=1$. 
Similarly the unitary $u\in A$ lifts to a unitary $(u_n)_n\in\prod M_{m_n}$. 
Define ucp maps $\phi'_n,\psi'_n:C(X)\to p_nM_{m_n}p_n$ and 
a unital homomorphism $\sigma_n: C(X)\to(1{-}p_n)M_{m_n}(1{-}p_n)$ by 
\[
\phi'_n(f)=p_n\phi_n(f)p_n,\quad 
\psi'_n(f)=p_nu_n\psi_n(f)u_n^*p_n\quad\text{and}\quad 
\sigma_n(f)=\sum_{i=1}^lf(z_i)p'_{i,n}. 
\]
It follows that there exists $n\in\N$ such that 
$\phi'_n$ and $\psi'_n$ are $(F,2\ep/3)$-multiplicative, 
$\phi_n\sim_{F,\ep}\phi'_n\oplus\sigma_n$ and 
$\psi_n\sim_{F,\ep}\psi'_n\oplus\sigma_n$. 
This contradicts the assumption, and so the proof is completed. 
\end{proof}

We can prove the following lemma in the same way as above. 

\begin{lem}\label{rr0}
Let $X$ be a compact metrizable space. 
For any finite subset $F\subset C(X)$, $\ep>0$ and $m\in\N$, 
there exist a finite subset $G\subset C(X)$ and $\delta>0$ 
such that the following hold. 
Let $A\in\mathcal{T}'$ be a $C^*$-algebra 
with at most $m$ extremal tracial states. 
Let $\phi$ and $\psi$ be $(G,\delta)$-multiplicative ucp maps 
from $C(X)$ to $A$ such that 
\[
\lvert\tau(\phi(f))-\tau(\psi(f))\rvert<\delta\quad 
\forall f\in G,\ \tau\in T(A). 
\]
Then there exist a projection $p\in A$, 
$(F,\ep)$-multiplicative ucp maps $\phi',\psi':C(X)\to pAp$ and 
a unital homomorphism $\sigma:C(X)\to(1{-}p)A(1{-}p)$ 
with finite dimensional range such that 
$\phi\sim_{F,\ep}\phi'\oplus\sigma$, $\psi\sim_{F,\ep}\psi'\oplus\sigma$ 
and $\tau(p)<\ep$ for any $\tau\in T(A)$. 
\end{lem}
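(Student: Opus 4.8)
\emph{Approach.} The plan is to mimic the proof of Lemma~\ref{matrix}: assume the statement fails, produce a sequence of counterexamples, pass to an ultraproduct, and derive a contradiction. Three adaptations are needed --- the target matrix algebras become algebras $A_n\in\mathcal{T}'$ with at most $m$ extremal traces, the single limiting trace of the matrix case becomes a finite family, and the step of that proof which equalises ranks of projections must be redone. So, fixing an increasing sequence $F_1\subset F_2\subset\cdots$ of finite subsets of $C(X)$ with dense union and assuming the conclusion fails, for each $n$ we obtain $(F_n,1/n)$-multiplicative ucp maps $\phi_n,\psi_n\colon C(X)\to A_n$ with $A_n\in\mathcal{T}'$ having at most $m$ extremal traces and $\lvert\tau(\phi_n(f))-\tau(\psi_n(f))\rvert<1/n$ for $f\in F_n$, $\tau\in T(A_n)$, but for which the conclusion of the lemma fails. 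Fix a free ultrafilter $\omega$ on $\N$, set $A=\prod A_n/\bigoplus_\omega A_n$ with quotient map $\pi$, and put $\tilde\phi=(\phi_n)_n$, $\tilde\psi=(\psi_n)_n$; then $\pi\circ\tilde\phi,\pi\circ\tilde\psi\colon C(X)\to A$ are unital homomorphisms, and, just as in Lemma~\ref{matrix}, $A$ has real rank zero (being a quotient of a product of real rank zero algebras). Passing to a set in $\omega$, the number $k\le m$ of extremal traces of $A_n$ is constant; listing them $\tau^{(n)}_1,\dots,\tau^{(n)}_k$, the formula $\tau_j(\pi((a_n)_n))=\lim_{n\to\omega}\tau^{(n)}_j(a_n)$ defines $\tau_j\in T(A)$, the hypothesis gives $\tau_j\circ\pi\circ\tilde\phi=\tau_j\circ\pi\circ\tilde\psi$, and we let $\mu_j$ be the probability measure on $X$ induced by this trace.

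\emph{The geometric step.} Next I would run the geometric part of Lemma~\ref{matrix} with $\mu_1,\dots,\mu_k$ in place of the single measure, which goes through verbatim with $\mu$ replaced everywhere by $k^{-1}\sum_j\mu_j$: cover $X$ by finitely many open sets $U_x$ with $\mu_j(\overline{U_x}\setminus U_x)=0$ for all $j$ and with $\lvert f(y)-f(y')\rvert<\ep/4$ for $y,y'\in U_x$, $f\in F$, and from the Boolean algebra they generate extract the finitely many atoms $W_1,\dots,W_L$ of positive $\mu$-measure, so that $\mu_j(X\setminus\bigcup_i W_i)=0$ for all $j$; choose $z_i\in W_i$. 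Using real rank zero of $A$, inside the hereditary subalgebra of $A$ generated by $\pi(\tilde\phi(C_0(W_i)))$ I pick a nonzero projection $p_i\le\bar p_i:=\pi(\tilde\phi(1_{W_i}))\in A^{**}$ with $\tau_j(p_i)>\mu_j(W_i)-\ep/(3L)$ for all $j$ (possible because that subalgebra's increasing approximate unit of projections converges strongly to $\bar p_i$ and there are only $k$ traces), noting that $\bar p_i$ commutes with $\pi(\tilde\phi(C(X)))$, that $\lVert\pi(\tilde\phi(f))\bar p_i-f(z_i)\bar p_i\rVert<\ep/4$ for $f\in F$, and that the $\bar p_i$ are mutually orthogonal; similarly I obtain $\bar q_i,q_i$ for $\tilde\psi$.

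\emph{The comparison step.} The essential new point is the analogue of the step ``find $p_i'\le p_i$, $q_i'\le q_i$ and a unitary $u$ with $p_i'=uq_i'u^*$ and $\tau(p_i')=\tau(q_i')=\min\{\tau(p_i),\tau(q_i)\}$''. With several traces one cannot equalise all of them by passing to subprojections, and a trace $\tau_j$ with $\mu_j(W_i)=0$ annihilates $p_i$, so strict comparison cannot be invoked directly in $A$. I would instead carry out this construction inside the $A_n$, which are \emph{simple}, so that every trace is faithful and no nonzero projection has zero trace. Lifting $p_i,q_i$ to mutually orthogonal families of nonzero projections $p_{i,n},q_{i,n}\in A_n$ (for $\omega$-many $n$), one has $s^{(i)}_j:=\min\{\tau^{(n)}_j(p_{i,n}),\tau^{(n)}_j(q_{i,n})\}>0$ for all $i,j$, with $s^{(i)}_j>\mu_j(W_i)-\ep/(3L)$ for $\omega$-many $n$. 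Since $T(A_n)$ is a simplex with at most $k$ vertices and $D_{A_n}(K_0(A_n))$ is dense in $\Aff(T(A_n))$ (real rank zero), there is $x_i\in K_0(A_n)$ with $\tau^{(n)}_j(x_i)\in(\max\{0,s^{(i)}_j-\ep/(3L)\},\,s^{(i)}_j)$ for all $j$ and $\tau(x_i)>0$ for every $\tau\in T(A_n)$; by weak unperforation of $K_0(A_n)$ and simplicity of $A_n$, $x_i\in K_0(A_n)_+\setminus\{0\}$, so $x_i=[r_{i,n}]$ for a nonzero projection $r_{i,n}\in M_\infty(A_n)$. As $\tau(r_{i,n})<\tau(p_{i,n})$ and $\tau(r_{i,n})<\tau(q_{i,n})$ at every vertex of $T(A_n)$, hence for every $\tau\in T(A_n)$, strict comparison of projections --- which holds for $A_n\in\mathcal{T}'$ by Remark~\ref{fourclasses}(2) --- yields mutually orthogonal $p_{i,n}'\le p_{i,n}$ and mutually orthogonal $q_{i,n}'\le q_{i,n}$ with $p_{i,n}'\sim r_{i,n}\sim q_{i,n}'$. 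Putting $p_n=1-\sum_i p_{i,n}'$ and $q_n=1-\sum_i q_{i,n}'$, cancellation (stable rank one) gives $p_n\sim q_n$, so I may assemble a unitary $u_n\in A_n$ with $u_nq_{i,n}'u_n^*=p_{i,n}'$ and $u_nq_nu_n^*=p_n$, and then set $\phi_n'(f)=p_n\phi_n(f)p_n$, $\psi_n'(f)=p_nu_n\psi_n(f)u_n^*p_n$, $\sigma_n(f)=\sum_i f(z_i)p_{i,n}'$; the last is a unital homomorphism into $(1-p_n)A_n(1-p_n)$ with finite dimensional range.

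\emph{Conclusion.} Finally, writing $p_i'=\pi((p_{i,n}')_n)$, $u=\pi((u_n)_n)$, $p=\pi((p_n)_n)$, and so on, I would verify \emph{in $A$} --- where it is crucial that $p_i'\le\bar p_i$ and that $\bar p_i$ commutes exactly with $\pi(\tilde\phi(C(X)))$, so that the elements $p_i'(\pi(\tilde\phi(f))-f(z_i))\bar p_i$ are pairwise orthogonal and no error accumulates over $i$ --- the relations $\pi\circ\tilde\phi\sim_{F,\ep}\phi''\oplus\sigma'$ and $\pi\circ\tilde\psi\sim_{F,\ep}\psi''\oplus\sigma'$, where $\sigma'(f)=\sum_i f(z_i)p_i'$ and $\phi''(f)=p\,\pi(\tilde\phi(f))\,p$, $\psi''(f)=pu\,\pi(\tilde\psi(f))\,u^*p$ are $(F,\ep)$-multiplicative, together with $\tau_j(p)<\ep$ for all $j$ and hence $\tau(p)<\ep$ for all $\tau\in T(A)$ by convexity. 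Each of these is an inequality between images under $\pi$ of bounded sequences, and therefore holds in $A_n$ for $\omega$-many $n$; choosing a single $n$ in the intersection of the finitely many $\omega$-large sets involved produces $p_n$, $\phi_n'$, $\psi_n'$ and $\sigma_n$ satisfying the conclusion of the lemma for that pair $\phi_n,\psi_n$, a contradiction. I expect the comparison step of the previous paragraph to be the only genuinely delicate part: making $p_{i,n}'$ and $q_{i,n}'$ simultaneously close to $p_{i,n}$ and $q_{i,n}$ in every trace while keeping them Murray--von Neumann equivalent is precisely what forces one to descend to the simple algebras $A_n$ and use their strict comparison, weak unperforation and stable rank one, rather than arguing inside the non-simple algebra $A$.
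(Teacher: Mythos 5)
Your proposal is correct and follows essentially the same route as the paper: the ultraproduct contradiction argument modelled on Lemma \ref{matrix}, finitely many limiting measures $\mu_j$, and—crucially—the descent to the simple algebras $A_n$, where density of $\Ima D_{A_n}$ in $\Aff(T(A_n))$ plus strict comparison produce the equivalent subprojections $p'_{i,n}\sim q'_{i,n}$, is exactly how the paper handles the comparison step. Your explicit use of stable rank one (cancellation) to assemble the unitary $u_n$, and the remark that convexity over the finitely many extremal traces of $A_n$ yields $\tau(p_n)<\ep$ for all $\tau\in T(A_n)$, merely spell out details the paper leaves implicit.
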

\begin{proof}
Suppose that 
we are given a finite subset $F\subset C(X)$, $\ep>0$ and $m\in\N$. 
We may assume that elements of $F$ are of norm one. 
The proof is by contradiction. 
If the lemma was false, then 
we would have a sequence of $C^*$-algebras $(A_n)_n$ in $\mathcal{T}'$ and 
a sequence of pairs of ucp maps $\phi_n$ and $\psi_n$ 
from $C(X)$ to $A_n$ as in the proof of Lemma \ref{matrix}. 
Define $\tilde\phi$, $\tilde\psi$, $B=\prod A_n/\bigoplus_\omega A_n$ and 
$\pi:\prod A_n\to B$ in the same way. 
For each $n$, choose extremal tracial states 
$\tau_{1,n},\tau_{2,n},\dots,\tau_{m,n}\in T(A_n)$ 
so that $\{\tau_{1,n},\tau_{2,n},\dots,\tau_{m,n}\}$ 
exhausts all the extremal traces on $A_n$. 
For each $j=1,2,\dots,m$, 
one can define $\tau_{j,\omega}\in T(B)$ by 
\[
\tau_{j,\omega}(\pi((a_n)_n))=\lim_{n\to\omega}\tau_{j,n}(a_n). 
\]
We obtain a probability measure $\mu_j$ on $X$ corresponding to 
$\tau_{j,\omega}\circ\pi\circ\tilde\phi
=\tau_{j,\omega}\circ\pi\circ\tilde\psi$. 
In the same way as in Lemma \ref{matrix}, 
we can find pairwise disjoint open subsets $W_1,W_2,\dots,W_l$ of $X$ 
such that 
\[
\max_j\mu_j(W_i)>0\quad \forall i=1,2,\dots,l,\quad 
\max_j\mu_j(X\setminus(W_1\cup W_2\cup\dots\cup W_l))=0
\]
and $\lvert f(y)-f(y')\rvert<\ep/3$ for any $y,y'\in W_i$ and $f\in F$. 
Choose $z_i\in W_i$ for each $i=1,2,\dots,l$. 
In the same way as in Lemma \ref{matrix}, 
we also get a family of mutually orthogonal non-zero projections 
$p_1,p_2,\dots,p_l$ in $B$ such that 
$\tau_{j,\omega}(p_i)>\mu_j(W_i)-\ep/2l$ 
and $\lVert\pi(\tilde\phi(f))p_i-f(z_i)p_i\rVert<\ep/3$ for all $f\in F$. 
Similarly one can find mutually orthogonal non-zero projections 
$q_1,q_2,\dots,q_l$ in $B$ for $\tilde\psi$. 
It is well-known that the projections $p_i$ (resp. $q_i$) lift to 
mutually orthogonal projections $(p_{i,n})_n$ (resp. $(q_{i,n})_n$) 
in $\prod A_n$. 
Then there exists $N\in\omega$ such that 
\[
p_{i,n}\neq0,\quad q_{i,n}\neq0,\quad 
\tau_{j,n}(p_{i,n})>\mu_j(W_i)-\ep/2l,\quad 
\tau_{j,n}(q_{i,n})>\mu_j(W_i)-\ep/2l
\]
holds for every $i=1,2,\dots,l$, $j=1,2,\dots,m$ and $n\in N$. 
For each $n\in\N$, the image of $D_{A_n}$ is dense in $\Aff(T(A_n))$ 
by \cite[Theorem 6.9.3]{Blacktext}. 
It follows that for each $n\in N$ and $i=1,2,\dots,l$ 
there exist projections $r_{i,n}\in A_n$ such that 
\[
\mu_j(W_i)-\ep/2l<\tau_{j,n}(r_{i,n})
<\min\{\tau_{j,n}(p_{i,n}),\tau_{j,n}(q_{i,n})\}\quad 
\forall j=1,2,\dots,m. 
\]
Besides $A_n$ satisfies strict comparison of projections 
(see Remark \ref{fourclasses} (2)). 
Therefore, for $n\in N$, we can find projections $p'_{i,n}\leq p_{i,n}$, 
$q'_{i,n}\leq q_{i,n}$ and a unitary $u_n\in A_n$ 
such that $\tau_{j,n}(p'_{i,n})>\mu_j(W_i)-\ep/2l$ and 
$p'_{i,n}=u_nq'_{i,n}u_n^*$. 
For $n\notin N$, set $p'_{i,n}=q'_{i,n}=0$ and $u_n=1$. 
Let $p'_i,q'_i,u\in B$ be the image of 
$(p'_{i,n})_n$, $(q'_{i,n})_n$ and $(u_n)_n$ by $\pi$. 
Then we have $p'_i\leq p_i$, $q'_i\leq q_i$, $p'_i=uq'_iu^*$ and 
$\tau_{j,\omega}(p'_i)>\mu_j(W_i)-\ep/l$. 
The rest of the proof is exactly the same as Lemma \ref{matrix}. 
\end{proof}

We can show the same statement for the case that 
the target algebra is of tracial rank zero. 

\begin{lem}\label{TAF}
Let $X$ be a compact metrizable space. 
For any finite subset $F\subset C(X)$ and $\ep>0$, 
there exist a finite subset $G\subset C(X)$ and $\delta>0$ 
such that the following hold. 
Let $A\in\mathcal{T}$ and 
let $\phi$ and $\psi$ be $(G,\delta)$-multiplicative ucp maps 
from $C(X)$ to $A$ such that 
\[
\lvert\tau(\phi(f))-\tau(\psi(f))\rvert<\delta\quad 
\forall f\in G,\ \tau\in T(A). 
\]
Then there exist a projection $p\in A$, 
$(F,\ep)$-multiplicative ucp maps $\phi',\psi':C(X)\to pAp$ and 
a unital homomorphism $\sigma:C(X)\to(1{-}p)A(1{-}p)$ 
with finite dimensional range such that 
$\phi\sim_{F,\ep}\phi'\oplus\sigma$, $\psi\sim_{F,\ep}\psi'\oplus\sigma$ 
and $\tau(p)<\ep$ for any $\tau\in T(A)$. 
\end{lem}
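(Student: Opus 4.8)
\noindent
\textit{Proof proposal.}
The plan is to imitate the ultraproduct argument used for Lemmas \ref{matrix} and \ref{rr0}. Suppose the statement fails for some $F$ and $\ep$: then there are $A_n\in\mathcal{T}$ and ucp maps $\phi_n,\psi_n\colon C(X)\to A_n$ that are asymptotically multiplicative and satisfy $\sup_{\tau\in T(A_n)}\lvert\tau(\phi_n(f))-\tau(\psi_n(f))\rvert\to0$ but for which the conclusion never holds. Fixing a free ultrafilter $\omega$, I would form $B=\prod A_n/\bigoplus_\omega A_n$ with quotient map $\pi$, assemble $\tilde\phi,\tilde\psi\colon C(X)\to\prod A_n$ from the $\phi_n$ and $\psi_n$, and note that $\pi\circ\tilde\phi,\pi\circ\tilde\psi$ are unital homomorphisms into $B$ and that $B$ has real rank zero. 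From here the skeleton is exactly that of Lemma \ref{rr0}: choose a fine partition $X=\bigcup_iW_i$ into relatively open sets of small $F$-diameter, split off mutually orthogonal projections $p_i$ (resp. $q_i$) in the hereditary subalgebras generated by $\pi\tilde\phi(C_0(W_i))$ (resp. $\pi\tilde\psi(C_0(W_i))$) with $\pi\tilde\phi(f)p_i\approx f(z_i)p_i$ etc., pass to subprojections $p_i'\leq p_i$, $q_i'\leq q_i$ that are all Murray--von Neumann equivalent through a single unitary $u$, and then read off $\sigma(f)=\sum_if(z_i)p_i'$, the corner projection $p=1-\sum_ip_i'$, and $\phi',\psi'$ by lifting everything back to $\prod A_n$; the contradiction then surfaces in one factor $A_n$.

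The one step that does not carry over verbatim is the trace bookkeeping. An algebra in $\mathcal{T}'$ has at most $m$ extremal traces, so in Lemma \ref{rr0} one only controls the finitely many measures $\mu_j$ induced by the $\tau_{j,\omega}$; an algebra in $\mathcal{T}$ may have an arbitrary (metrizable Choquet-simplex) trace space, yet the estimate $\tau(p)<\ep$ is required for every $\tau$. The remedy is that $\mathcal{T}$ supplies strictly more than real rank zero: by Remark \ref{fourclasses} each $A_n$ has strict comparison of projections, and by \cite[Theorem 6.9.3]{Blacktext} the image of $D_{A_n}$ is uniformly dense in $\Aff(T(A_n))$. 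I would use this at the two trace-dependent points. First, using separability of the trace spaces one may work with a countable set of extremal traces (and hence countably many induced measures on $X$) and choose the partition so that the leftover closed set $X\setminus\bigcup_iW_i$ has $\phi$- and $\psi$-mass less than $\ep/2$ for all of them — whereupon tracial rank zero of the $A_n$ (giving hereditary-subalgebra projections whose trace is uniformly close to that of the open support) yields $\sum_i\tau(p_i')>1-\ep$ for these traces, and then for every $\tau\in T(A_n)$ since an affine function attains its supremum on the closure of the extreme points. Second, the $m$ real intervals of Lemma \ref{rr0} for the auxiliary projections $r_{i,n}$ become the demand that a single projection of $A_n$ realise a trace function lying, for all $\tau$, between a prescribed affine lower bound and the pointwise minimum of the two $W_i$-masses; since that minimum is only concave, I would first invoke the sandwich theorem to interpolate a continuous affine function between it and the convex lower bound, then approximate that function by an element of $D_{A_n}(K_0(A_n))$, and finally use strict comparison together with the strict ordering of $K_0(A_n)$ from its states (as in the proof of Lemma \ref{KL+1}, via \cite[Theorem 6.8.5]{Blacktext}) to turn the resulting trace inequalities into an honest subequivalence of projections implemented by one unitary. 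With these modifications the rest of the argument is word for word that of Lemma \ref{rr0}.

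I expect the genuine difficulty to be exactly this passage from ``finitely many extremal traces'' to ``the whole trace space'': the partition has to be fine enough and its boundary small enough simultaneously for all the relevant induced measures (a uniformity that the weak-$*$ compactness of that family of measures has to be leaned on to secure), and the construction of the matching projections becomes an infinite-dimensional interpolation problem with a concave constraint rather than the finite-dimensional one occurring in Lemma \ref{rr0}. Once the uniform hereditary-subalgebra estimate and the sandwich-plus-density construction of the matching projections are in place, no new idea beyond Lemma \ref{rr0} should be required.
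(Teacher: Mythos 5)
Your overall plan — rerun the ultraproduct argument of Lemma \ref{rr0} directly for algebras in $\mathcal{T}$ and patch the trace bookkeeping — is not how the paper proceeds, and the patch has a genuine gap exactly at the point you single out. The step that fails is the claim that tracial rank zero gives ``hereditary-subalgebra projections whose trace is uniformly close to that of the open support,'' which is then supposed to yield $\sum_i\tau(p_i')>1-\ep$ first for a countable dense set of extremal traces and then, via Bauer's maximum principle, for all $\tau$. The function $\tau\mapsto\mu_\tau(W_i)$ (the mass of the open set) is only lower semicontinuous on the trace simplex, whereas $\tau\mapsto\tau(p)$ is continuous and affine for any fixed projection $p$ in the hereditary subalgebra; hence no single projection can capture the mass uniformly over $T(A)$, and even for countably many traces $\tau_j$ one cannot in general find one projection $p_i$ with $\tau_j(p_i)>\mu_{\tau_j}(W_i)-\ep/2l$ for \emph{all} $j$ simultaneously: along an increasing approximate unit of projections the required bound is achieved eventually for each $j$, but there is no uniformity in $j$ (a Dini-type argument is unavailable because the limit function is merely lower semicontinuous). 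The appeal to Bauer's principle only helps \emph{after} you have the estimate at the countably many chosen traces, so the argument is circular at its crucial point; the same lack of uniformity also undermines the choice of the partition (boundary-nullity and leftover-mass control for the whole, $n$-dependent, family of relevant measures) and the interpolation step for the matching projections, where your intended lower bound is again only lower semicontinuous, not an element of $\Aff(T(A))$.

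The paper avoids all of this by using tracial rank zero in a different way, and much more cheaply: given $F,\ep$ it first fixes $G,\delta$ from the matrix-algebra Lemma \ref{matrix}; then, for $A\in\mathcal{T}$, it chooses finite dimensional subalgebras $B_n\subset A$ with units $e_n$ and ucp maps $\pi_n:A\to B_n$ such that $e_n$ almost commutes with $\phi(G)\cup\psi(G)$, $\pi_n\approx e_n(\cdot)e_n$ on these elements, and — this is the key — $\tau(1-e_n)\to0$ \emph{uniformly} on $T(A)$. A short compactness argument (an accumulation point of $\tau_n\circ\pi_n$ would be a trace of $A$ violating the hypothesis) shows that the trace-closeness assumption passes to $T(B_n)$, and then Lemma \ref{matrix} is applied inside the finite dimensional algebra $B_n$, where the whole trace simplex is controlled by the finitely many block traces. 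The uniform smallness of $\tau(1-e_n)$ is what converts the blockwise estimate into the required bound $\tau(p)<\ep$ for \emph{every} $\tau\in T(A)$. So the passage from finitely many extremal traces to an arbitrary trace simplex is handled not by refining the ultraproduct construction but by compressing into a finite dimensional subalgebra that carries almost all of every trace; I would rewrite your proof along these lines.
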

\begin{proof}
Suppose that we are given a finite subset $F\subset C(X)$ and $\ep>0$. 
Applying Lemma \ref{matrix} for $F$ and $\ep$, 
we obtain $G\subset C(X)$ and $\delta>0$. 
Let $A$ be a unital simple separable $C^*$-algebra with tracial rank zero 
and let $\phi$ and $\psi$ be $(G,\delta)$-multiplicative ucp maps 
from $C(X)$ to $A$ such that 
\[
\lvert\tau(\phi(f))-\tau(\psi(f))\rvert<\delta
\]
for any $f\in G$ and $\tau\in T(A)$. 
Since $A$ has tracial rank zero, 
there exist a sequence of projections $e_n\in A$, 
a sequence of finite dimensional subalgebras $B_n$ of $A$ with $1_{B_n}=e_n$ 
and a sequence of ucp maps $\pi_n:A\to B_n$ 
such that the following hold. 
\begin{itemize}
\item $\lVert[a,e_n]\rVert\to0$ as $n\to\infty$ for any $a\in A$. 
\item $\lVert\pi_n(a)-e_nae_n\rVert\to0$ as $n\to\infty$ for any $a\in A$. 
\item $\tau(1-e_n)\to0$ as $n\to\infty$ uniformly on $T(A)$. 
\end{itemize}
It is easy to see that 
$\pi_n\circ\phi$ and $\pi_n\circ\psi$ are $(G,\delta)$-multiplicative 
for sufficiently large $n\in\N$. 
We would like to show that 
\[
\lvert\tau(\pi_n(\phi(f)))-\tau(\pi_n(\psi(f)))\rvert<\delta
\]
holds for every $f\in G$, $\tau\in T(B_n)$ and sufficiently large $n\in\N$. 
To this end, we assume that there exist $\tau_n\in T(B_n)$ such that 
\[
\max_{f\in G}
\lvert\tau_n(\pi_n(\phi(f)))-\tau_n(\pi_n(\psi(f)))\rvert\geq\delta. 
\]
Let $\tau\in A^*$ be an accumulation point of $\tau_n\circ\pi_n$. 
Clearly $\tau$ is a tracial state of $A$ and 
$\lvert\tau(\phi(f))-\tau(\psi(f))\rvert\geq\delta$ for some $f\in G$, 
which is a contradiction. 

Hence, Lemma \ref{matrix} implies that, for sufficiently large $n\in\N$, 
there exist a projection $p_n\in B_n$, 
$(F,\ep)$-multiplicative ucp maps $\phi'_n,\psi'_n:C(X)\to p_nB_np_n$ and 
a unital homomorphism $\sigma_n:C(X)\to(e_n{-}p_n)B_n(e_n{-}p_n)$ such that 
\[
\pi_n\circ\phi\sim_{F,\ep}\phi'_n\oplus\sigma_n,\quad 
\pi_n\circ\psi\sim_{F,\ep}\psi'_n\oplus\sigma_n
\]
and $\tau(p_n)<\ep$ for any $\tau\in T(B_n)$. 
Therefore the proof is completed. 
\end{proof}

The following is taken from \cite[Theorem 3.1]{GL00IJM}. 
We remark that its origin is found in \cite{D}. 

\begin{thm}[{\cite[Theorem 3.1]{GL00IJM}}]\label{GongLin}
Let $X$ be a compact metrizable space. 
For any finite subset $F\subset C(X)$ and $\ep>0$, 
there exist a finite subset $G\subset C(X)$, $\delta>0$, $l\in\N$ and 
a finite subset $L\subset\mathcal{K}(C(X))$ satisfying the following:
For any unital $C^*$-algebra $A$ with real rank zero, stable rank one and 
weakly unperforated $K_0(A)$ and 
any $(G,\delta)$-multiplicative ucp maps $\phi,\psi:C(X)\to A$ satisfying 
$\phi_\#|L=\psi_\#|L$, 
there exist a unitary $u\in M_{l+1}(A)$ and $\{x_1,x_2,\dots,x_l\}\subset X$ 
such that 
\[
\lVert u\diag(\phi(f),f(x_1),f(x_2),\dots,f(x_l))u^*
-\diag(\psi(f),f(x_1),f(x_2),\dots,f(x_l))\rVert<\ep
\]
for any $f\in F$. 
\end{thm}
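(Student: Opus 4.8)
The statement is a quantitative ``stable uniqueness'' theorem of the kind introduced in \cite{D}: the plan is to show that a long string of point evaluations at a fine finite net of $X$ behaves like an absorbing homomorphism, so that adding it to both $\phi$ and $\psi$ swamps every defect coming from non-multiplicativity and leaves only the $\underline{K}$-theoretic data, which is pinned down by the hypothesis $\phi_\#|L=\psi_\#|L$. First I would reduce to the case in which $X$ is a finite CW complex. Writing $C(X)=\varinjlim C(X_j)$ with each $X_j$ a finite CW complex and the bonding maps dual to surjections $X\to X_{j+1}\to X_j$, one chooses $j$ and a finite set $F'\subset C(X_j)$ such that every element of $F$ lies within $\ep/4$ of the image of $C(X_j)$ in $C(X)$; a unitary and points solving the problem for $(X_j,F',\ep/2)$ then pull back to a solution for $(X,F,\ep)$, the points being lifted through the surjection $X\to X_j$. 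After this step $\underline{K}(C(X))$ is finitely generated, so one may take $L\subset\mathcal{K}(C(X))$ to consist of representatives of a finite generating set; then $\phi_\#|L=\psi_\#|L$ forces $\phi_\#$ and $\psi_\#$ to agree, as partial homomorphisms, on all of $\underline{K}(C(X))$.

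Next I would pass from the quantitative formulation to a qualitative one by a compactness argument of the type used for Lemmas \ref{matrix}--\ref{TAF}. If the conclusion failed for some $F,\ep$, one would obtain unital $C^*$-algebras $A_n$ with real rank zero, stable rank one and weakly unperforated $K_0$, together with increasingly multiplicative ucp maps $\phi_n,\psi_n:C(X)\to A_n$ whose $\#$-maps agree on larger and larger finite subsets of $\mathcal{K}(C(X))$, for which no unitary in $M_{n+1}(A_n)$ and no $n$ points of $X$ give an $\ep$-match on $F$. Passing to $\mathcal{A}=\prod_n A_n/\bigoplus_\omega A_n$, the maps $\Phi=(\phi_n)_\omega$ and $\Psi=(\psi_n)_\omega$ become genuine unital homomorphisms into $\mathcal{A}$, which again has real rank zero, stable rank one and weakly unperforated $K_0$. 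The target is then reduced to the qualitative statement: \emph{for unital homomorphisms $\Phi,\Psi:C(X)\to B$ into such a $B$ with $\Phi_\#=\Psi_\#$ on $\underline{K}(C(X))$, the maps $\Phi\oplus\rho$ and $\Psi\oplus\rho$ are approximately unitarily equivalent, where $\rho(f)=\diag(f(x_1),\dots,f(x_l))$ is evaluation at a sufficiently fine finite net of $X$}; applied in $B=\mathcal{A}$ this contradicts the choice of the $A_n$. One point that needs care here is that $K$-theory does not commute with the product $\prod_n A_n$; this is handled by keeping track, through the CW reduction, of the finitely many classes that matter and working with their representatives rather than with abstract $K$-classes.

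The qualitative statement is where the real work lies. Using real rank zero one approximately diagonalizes $\Phi$: cutting down the hereditary subalgebras $\Phi(C_0(U))\,B\,\Phi(C_0(U))$ over a fine open cover of $X$ by projections --- the same technology as in Lemma \ref{matrix} --- one writes $\Phi$, on the relevant finite set, as $\Phi_0\oplus\sum_j f(a_j)p_j$ with $\tau(\Phi_0(1))$ uniformly small, and likewise $\Psi=\Psi_0\oplus\sum_j f(b_j)q_j$. The hypothesis $\Phi_\#=\Psi_\#$ says that the two families of spectral projections agree in $\underline{K}(B)$ modulo the negligible corners. One then uses weak unperforation of $K_0(B)$ together with stable rank one to compare, cut and exchange projections so that the $p_j$ are matched with the $q_j$ up to Murray--von Neumann equivalence and the points $a_j$ with $b_j$ up to $\ep$, absorbing the small corners $\Phi_0,\Psi_0$ and rounding positions into the reservoir of fungible mass supplied by $\rho$; this produces the required unitary. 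I expect this matching step --- converting the equality of $\underline{K}$-classes into a genuine unitary identification of spectral projections after padding by $\rho$ --- to be the main obstacle, and it is precisely here that the torsion (mod-$n$) part of $\underline{K}$, and not merely $K_0$ and $K_1$, must be used, which is why the invariant is the full restriction $\phi_\#|L$ with $L\subset\mathcal{K}(C(X))$ rather than just $K_*(\phi)$.
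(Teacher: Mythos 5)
You should first note that the paper does not prove this statement at all: Theorem \ref{GongLin} is imported verbatim from \cite[Theorem 3.1]{GL00IJM} (with its origin in \cite{D}), so there is no internal proof to compare against; any proposal here is being measured against the Gong--Lin/Dadarlat stable uniqueness machinery itself.

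Your outer framework (reduction to finite CW complexes via \cite{Mardesic}-type inverse limits, then an ultraproduct compactness argument to trade the quantitative statement for a qualitative one about honest homomorphisms) is reasonable, but the proposal has a genuine gap exactly where you yourself locate ``the real work'': the qualitative absorption statement --- that $\Phi\oplus\rho$ and $\Psi\oplus\rho$ are approximately unitarily equivalent once $\Phi_\#=\Psi_\#$, with $\rho$ a point evaluation at a fine net --- is essentially a restatement of the theorem, and the mechanism you sketch for it would not work in the stated generality. The algebra $A$ in Theorem \ref{GongLin} is \emph{not} assumed simple and carries no tracial hypotheses whatsoever, so there is no analogue of Lemmas \ref{matrix}--\ref{TAF}: you cannot approximately diagonalize $\Phi$ with a corner that is ``small in trace,'' and you have no strict comparison with which to ``cut and exchange'' the spectral projections; weak unperforation of $K_0$ alone does not let you match projections whose classes agree only modulo a negligible corner, and it is not even clear that weak unperforation survives the passage to $\prod A_n/\bigoplus_\omega A_n$ (real rank zero and stable rank one do, but $K_0$ of an ultraproduct is not the ultraproduct of the $K_0$'s). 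What actually replaces tracial smallness in \cite{D,GL00IJM} is a purely $K$-theoretic absorption argument: the added diagonal of $l$ point evaluations is made dominant in a Weyl--von Neumann/Voiculescu sense, and the identification of the two maps after padding is carried out through $KL$-theory and the universal multicoefficient theorem (Theorem \ref{multiUCT}), which is where the mod-$p$ part of $\underline{K}$ enters; your sketch names this as ``the main obstacle'' but supplies no argument for it. As it stands the proposal reduces the theorem to an unproved statement of comparable depth, so it does not constitute a proof.
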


The following theorem is a variant of \cite[Theorem 4.6]{L07Trans}. 

\begin{thm}\label{appunique1}
Let $X$ be a compact metrizable space, 
let $F\subset C(X)$ be a finite subset and let $\ep>0$. 
Then there exist a finite subset $L\subset\mathcal{K}(C(X))$ and 
a family of mutually orthogonal positive elements 
$h_1,h_2,\dots,h_k\in C(X)$ of norm one 
such that the following holds. 
For any $\nu>0$, 
one can find a finite subset $G\subset C(X)$ and $\delta>0$ 
satisfying the following. 
For any $A\in\mathcal{T}$ and 
any $(G,\delta)$-multiplicative ucp maps $\phi,\psi:C(X)\to A$ 
such that $\phi_\#|L=\psi_\#|L$, 
\[
\tau(\phi(h_i))\geq\nu\quad 
\forall \tau\in T(A),\ i{=}1,2,\dots,k
\]
and 
\[
\lvert\tau(\phi(f))-\tau(\psi(f))\rvert<\delta\quad 
\forall \tau\in T(A),\ f\in G, 
\]
there exists a unitary $u\in A$ such that 
\[
\lVert u\phi(f)u^*-\psi(f)\rVert<\ep
\]
holds for any $f\in F$. 
\end{thm}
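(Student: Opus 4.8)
The plan is to combine Lemma \ref{TAF}, which removes the discrepancy of $\phi$ and $\psi$ on traces, with Theorem \ref{GongLin}, which removes their discrepancy on $\underline{K}$ at the cost of appending finitely many point evaluations, and then to exploit the lower bound $\tau(\phi(h_i))\geq\nu$ to re-import those point evaluations into $A$ itself without enlarging the algebra. First I fix the data that has to be chosen before $\nu$ is revealed. Apply Theorem \ref{GongLin} to $F$ and $\ep/8$ to obtain a finite set $G_2\subset C(X)$, a $\delta_2>0$, an $l\in\N$ and a finite subset $L\subset\mathcal{K}(C(X))$; this $L$ is the one demanded in the statement. Fix a compatible metric $d$ on $X$ and pick $\eta>0$ so small that $\lvert f(x)-f(y)\rvert<\ep/(100(l+1))$ whenever $d(x,y)<2\eta$ and $f\in F$. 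Let $\{w_1,\dots,w_k\}$ be a maximal $\eta$-separated subset of $X$, set $U_i=\{x\in X:d(x,w_i)<\eta\}$ (so the $U_i$ cover $X$), and choose a positive norm-one $h_i\in C(X)$ with $\operatorname{supp}h_i\subset\{x:d(x,w_i)<\eta/3\}$. The $h_i$ have pairwise disjoint supports; these are the $h_1,\dots,h_k$ of the statement.

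Now let $\nu>0$ be given. Pick $\ep_1$ with $0<\ep_1<\min\{\delta_2,\ep/8,\nu/(8l+8)\}$, put $F'=F\cup\{h_1,\dots,h_k\}\cup G_2$ (enlarged if necessary so that the classes in $L$ are stable under $(F',\ep_1)$-approximate unitary equivalence), apply Lemma \ref{TAF} to $F'$ and $\ep_1$ to obtain $G_1,\delta_1$, and set $G=G_1\cup G_2$, $\delta=\min\{\delta_1,\delta_2,\ep_1\}$. Given $A\in\mathcal{T}$ and $(G,\delta)$-multiplicative $\phi,\psi:C(X)\to A$ satisfying the three hypotheses, Lemma \ref{TAF} produces a projection $p\in A$ with $\tau(p)<\ep_1$ for all $\tau\in T(A)$, $(F',\ep_1)$-multiplicative ucp maps $\phi_1,\psi_1:C(X)\to pAp$, and a single unital homomorphism $\sigma:C(X)\to(1-p)A(1-p)$ of finite-dimensional range, such that $\phi\sim_{F',\ep_1}\phi_1\oplus\sigma$ and $\psi\sim_{F',\ep_1}\psi_1\oplus\sigma$. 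Write $\sigma(f)=\sum_s f(y_s)q_s$ with the $q_s$ mutually orthogonal projections summing to $1-p$. Two things follow. First, $\phi_{1\#}$ and $\psi_{1\#}$ are defined on $L$, and since $\phi_\#|L=\phi_{1\#}|L+\sigma_\#|L$ and $\psi_\#|L=\psi_{1\#}|L+\sigma_\#|L$, the hypothesis $\phi_\#|L=\psi_\#|L$ gives $\phi_{1\#}|L=\psi_{1\#}|L$. Second, $\tau(\phi_1(h_i))\leq\tau(p)<\ep_1$, hence $\tau(\sigma(h_i))\geq\tau(\phi(h_i))-2\ep_1\geq\nu/2$, so that $\sum_{s:\,y_s\in\operatorname{supp}h_i}\tau(q_s)\geq\nu/2$ for every $\tau$ and every $i$, and each such $y_s$ lies in $U_i$.

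Apply Theorem \ref{GongLin} to $\phi_1,\psi_1:C(X)\to pAp$, which is again simple of real rank zero and stable rank one with weakly unperforated $K_0$: it gives points $x_1,\dots,x_l\in X$ and a unitary in $M_{l+1}(pAp)$ realizing $\diag(\phi_1(f),f(x_1),\dots,f(x_l))\sim_{F,\ep/8}\diag(\psi_1(f),f(x_1),\dots,f(x_l))$. For each $j$ fix $i(j)$ with $x_j\in U_{i(j)}$. Because $\sum_{s:\,y_s\in\operatorname{supp}h_i}\tau(q_s)\geq\nu/2>l\,\tau(p)$ uniformly in $\tau$, strict comparison of projections in the tracial rank zero algebra $A$ (Remark \ref{fourclasses}(1)) lets me carve, inductively over the finitely many $j$, pairwise orthogonal subprojections $e_j\leq\bigoplus_{s:\,y_s\in\operatorname{supp}h_{i(j)}}q_s$ that are Murray--von Neumann equivalent to $p$; the blocks belonging to distinct values of $i(j)$ are orthogonal, so all the $e_j$ are mutually orthogonal. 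Since every $y_s$ occurring in $e_j$ lies in $U_{i(j)}\ni x_j$ and every $f\in F$ oscillates little on $U_{i(j)}$, a routine estimate (using that the blocks $\bigoplus_{s:y_s\in\operatorname{supp}h_i}q_s$ are mutually orthogonal and commute with $\sigma(f)$) shows $\sigma(f)\approx_{\ep/8}\sigma'(f)+\sum_j f(x_j)e_j$ for all $f\in F$, where $\sigma'$ is the compression of $\sigma$ to $q'Aq'$ with $q'=1-p-\sum_j e_j$. Put $P=p+\sum_j e_j$ and choose matrix units witnessing $e_j\sim p$; this identifies $PAP\cong M_{l+1}(pAp)$ in such a way that $f\mapsto\phi_1(f)+\sum_j f(x_j)e_j$ becomes $\diag(\phi_1(f),f(x_1),\dots,f(x_l))$, and likewise for $\psi_1$. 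Conjugating by the Gong--Lin unitary of $M_{l+1}(pAp)$, extended by $1-P$ over $q'Aq'$ (on which $\sigma'$ is left fixed), and chaining this with $\phi\sim_{F',\ep_1}\phi_1\oplus\sigma$, the estimate $\sigma(f)\approx_{\ep/8}\sigma'(f)+\sum_j f(x_j)e_j$, and the same two facts for $\psi$ with the \emph{same} $\sigma$, $\sigma'$ and $e_j$, produces a unitary $u\in A$ with $\lVert u\phi(f)u^*-\psi(f)\rVert<2\ep_1+\ep/2<\ep$ for all $f\in F$.

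The heart of the matter is the extraction of the $e_j$: mutually orthogonal projections that sit, inside the finite-dimensional homomorphism $\sigma$, over the points $x_1,\dots,x_l$ produced by Theorem \ref{GongLin} and carry $K_0$-class exactly $[p]$, so that they reassemble with $\phi_1$ and $\psi_1$ into honest $(l+1)$-fold matrix amplifications of $pAp$ on which the Gong--Lin unitary can act. This is exactly what the hypothesis $\tau(\phi(h_i))\geq\nu$ makes possible --- it guarantees, via $\tau(p)<\ep_1$, that $\sigma$ has trace at least $\nu/2$ over points of every $U_i$, hence near every point of $X$ --- and it is where strict comparison in the tracial rank zero target is used. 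The remainder is bookkeeping of quantifiers, handled by fixing $L$, $l$, $\eta$ and the $h_i$ from $F$ and $\ep$ before $\nu$ enters.
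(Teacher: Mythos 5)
Your proof is correct and follows essentially the same route as the paper: decompose via Lemma \ref{TAF} into small-corner maps plus a common finite-dimensional homomorphism, apply Theorem \ref{GongLin} in the corner $pAp$, and use the hypothesis $\tau(\phi(h_i))\geq\nu$ together with strict comparison to supply, inside $\sigma$, projections equivalent to $p$ that host the Gong--Lin point evaluations. The only difference is organizational --- where the paper perturbs $\sigma$ to $\sigma'$ and $\sigma''$ with atoms at the points $x_j$ and leaves the final absorption as ``not so hard to see,'' you carve the subprojections $e_j$ directly from the spectral blocks over $\operatorname{supp}h_{i(j)}$ and write out the matrix-unit identification explicitly.
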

\begin{proof}
We say that a subset $Y\subset X$ is $(F,\ep)$-dense 
if for any $x\in X$ there exists $y\in Y$ such that 
$\lvert f(x)-f(y)\rvert<\ep$ for every $f\in F$. 
Choose an $(F,\ep/7)$-dense finite subset $\{y_1,y_2,\dots,y_k\}\subset X$. 
For each $i=1,2,\dots,k$, choose an open neighborhood $U_i$ of $y_i$ so that 
$x\in U_i$ implies $\lvert f(x)-f(y_i)\rvert<\ep/7$ 
for any $f\in F$ and that 
$U_1,U_2,\dots,U_k$ are mutually disjoint. 
Choose a positive function $h_i\in C_0(U_i)$ of norm one. 
By applying Theorem \ref{GongLin} to $F$ and $\ep/7$, 
we obtain a finite subset $G_1\subset C(X)$, $\delta_1>0$, $l\in\N$ 
and a finite subset $L\subset\mathcal{K}(C(X))$. 
There exist a finite subset $G_2\subset C(X)$ and $\delta_2>0$ 
such that the following holds: 
For any unital $C^*$-algebra $A$ and 
any $(G_2,\delta_2)$-multiplicative ucp maps $\phi,\psi:C(X)\to A$, 
if $\lVert\phi(f)-\psi(f)\rVert<\delta_2$ for every $f\in G_2$, 
then $\phi_\#|L=\psi_\#|L$. 
Suppose that $\nu>0$ is given. 
Let 
\[
G_3=F\cup G_1\cup G_2\cup\{h_1,h_2,\dots,h_k\}
\]
and 
\[
\delta_3=\min\{\ep/7,\delta_1,\delta_2,\nu/(l{+}2)\}. 
\]
By applying Lemma \ref{TAF} to $G_3$ and $\delta_3$, 
we obtain a finite subset $G\subset C(X)$ and $\delta>0$. 

Suppose that $A$ is a unital simple separable $C^*$-algebra $A$ 
with tracial rank zero 
and that $\phi,\psi:C(X)\to A$ are $(G,\delta)$-multiplicative ucp maps 
satisfying $\phi_\#|L=\psi_\#|L$, 
\[
\tau(\phi(h_i))\geq\nu\quad 
\forall \tau\in T(A),\ i{=}1,2,\dots,k
\]
and 
\[
\lvert\tau(\phi(f))-\tau(\psi(f))\rvert<\delta\quad 
\forall \tau\in T(A),\ f\in G. 
\]
By lemma \ref{TAF}, 
there exist a projection $p\in A$, 
$(G_3,\delta_3)$-multiplicative ucp maps $\phi',\psi':C(X)\to pAp$, 
a unital homomorphism $\sigma:C(X)\to(1{-}p)A(1{-}p)$ 
with finite dimensional range such that 
$\phi\sim_{G_3,\delta_3}\phi'\oplus\sigma$, 
$\psi\sim_{G_3,\delta_3}\psi'\oplus\sigma$ 
and $\tau(p)<\delta_3$ for any $\tau\in T(A)$. 
Since $G_2$ is contained in $G_3$ and 
$\delta_2$ is not greater than $\delta_3$, 
by the choice of $G_2$ and $\delta_2$, 
we obtain $(\phi'\oplus\sigma)_\#|L=(\psi'\oplus\sigma)_\#|L$, 
and hence $\phi'_\#|L=\psi'_\#|L$. 
Besides, $\phi'$ and $\psi'$ are $(G_1,\delta_1)$-multiplicative, 
because $G_1$ is contained in $G_3$ and 
$\delta_1$ is not greater than $\delta_3$. 
By Theorem \ref{GongLin}, 
there exist a unitary $u\in M_{l+1}(pAp)$ and 
$\{x_1,x_2,\dots,x_l\}\subset X$ such that 
\[
\lVert u\diag(\phi'(f),f(x_1),f(x_2),\dots,f(x_l))u^*
-\diag(\psi'(f),f(x_1),f(x_2),\dots,f(x_l))\rVert<\ep/7
\]
for any $f\in F$. 
In what follows, for a positive linear functional $\rho$ on $C(X)$, 
we let $\mu_\rho$ denote the corresponding measure on $X$. 
For any $\tau\in T(A)$ and $i=1,2,\dots,k$, one has 
\[
\mu_{\tau\circ\sigma}(U_i)\geq\tau(\sigma(h_i))
>\tau(\phi(h_i))-\tau(\phi'(h_i))-\delta_3
>\nu-2\delta_3
\geq l\delta_3. 
\]
It follows that 
there exists a unital homomorphism $\sigma':C(X)\to(1{-}p)A(1{-}p)$ 
with finite dimensional range such that 
\[
\lVert\sigma(f)-\sigma'(f)\rVert<\ep/7
\]
for any $f\in F$ and 
$\mu_{\tau\circ\sigma'}(\{y_i\})>l\delta_3$ 
for any $\tau\in T(A)$ and $i=1,2,\dots,k$. 
Since $\{y_1,y_2,\dots,y_k\}$ is $(F,\ep/7)$-dense, 
we can find a unital homomorphism $\sigma'':C(X)\to(1{-}p)A(1{-}p)$ 
with finite dimensional range such that 
\[
\lVert\sigma'(f)-\sigma''(f)\rVert<\ep/7
\]
for any $f\in F$ and $\mu_{\tau\circ\sigma''}(\{x_j\})>\delta_3$ 
for any $\tau\in T(A)$ and $j=1,2,\dots,l$. 
Then it is not so hard to see 
$\phi'\oplus\sigma''\sim_{F,\ep/7}\psi'\oplus\sigma''$. 
Consequently we have 
\begin{align*}
\phi&\sim_{F,\ep/7}\phi'\oplus\sigma\sim_{F,\ep/7}\phi'\oplus\sigma'
\sim_{F,\ep/7}\phi'\oplus\sigma'' \\
&\sim_{F,\ep/7}\psi'\oplus\sigma'' 
\sim_{F,\ep/7}\psi'\oplus\sigma'
\sim_{F,\ep/7}\psi'\oplus\sigma
\sim_{F,\ep/7}\psi. 
\end{align*}
\end{proof}

In the same fashion as above, one can prove the following 
by using Lemma \ref{rr0} instead of Lemma \ref{TAF} 
(see also \cite[Corollary 2.17]{GL00ActaSin}). 

\begin{thm}\label{appunique2}
Let $X$ be a compact metrizable space, 
let $F\subset C(X)$ be a finite subset and let $\ep>0$, $m\in\N$. 
Then there exist a finite subset $L\subset\mathcal{K}(C(X))$ and 
a family of mutually orthogonal positive elements 
$h_1,h_2,\dots,h_k\in C(X)$ of norm one 
such that the following holds. 
For any $\nu>0$, 
one can find a finite subset $G\subset C(X)$ and $\delta>0$ 
satisfying the following. 
Let $A\in\mathcal{T}'$ be a $C^*$-algebra 
with at most $m$ extremal tracial states. 
For any $(G,\delta)$-multiplicative ucp maps $\phi,\psi:C(X)\to A$ 
such that $\phi_\#|L=\psi_\#|L$, 
\[
\tau(\phi(h_i))\geq\nu\quad 
\forall \tau\in T(A),\ i{=}1,2,\dots,k
\]
and 
\[
\lvert\tau(\phi(f))-\tau(\psi(f))\rvert<\delta\quad 
\forall \tau\in T(A),\ f\in G, 
\]
there exists a unitary $u\in A$ such that 
\[
\lVert u\phi(f)u^*-\psi(f)\rVert<\ep
\]
holds for any $f\in F$. 
\end{thm}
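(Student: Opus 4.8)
The plan is to reproduce the proof of Theorem \ref{appunique1} essentially verbatim, with Lemma \ref{rr0} playing the role of Lemma \ref{TAF}; the single new feature is that Lemma \ref{rr0} carries the extra parameter $m$, which we simply feed in at the appropriate place.

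First I would fix an $(F,\ep/7)$-dense finite subset $\{y_1,\dots,y_k\}\subset X$, choose mutually disjoint open neighbourhoods $U_i\ni y_i$ on which every $f\in F$ oscillates by less than $\ep/7$, and pick positive functions $h_i\in C_0(U_i)$ of norm one. Applying Theorem \ref{GongLin} to $F$ and $\ep/7$ yields $G_1$, $\delta_1$, $l$ and the finite set $L\subset\mathcal{K}(C(X))$ that will appear in the statement, and a standard perturbation argument gives $G_2$, $\delta_2$ such that $(G_2,\delta_2)$-multiplicative ucp maps that are $\delta_2$-close on $G_2$ have the same value of $(\cdot)_\#|L$. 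Given $\nu>0$ I would set
\[
G_3=F\cup G_1\cup G_2\cup\{h_1,\dots,h_k\},\qquad
\delta_3=\min\{\ep/7,\delta_1,\delta_2,\nu/(l{+}2)\},
\]
and then apply Lemma \ref{rr0} to $G_3$, $\delta_3$ and $m$ to produce the required $G$ and $\delta$.

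Now take $A\in\mathcal{T}'$ with at most $m$ extremal traces and $(G,\delta)$-multiplicative ucp maps $\phi,\psi:C(X)\to A$ satisfying $\phi_\#|L=\psi_\#|L$, $\tau(\phi(h_i))\geq\nu$ and $|\tau(\phi(f))-\tau(\psi(f))|<\delta$ for all $\tau,f$. Lemma \ref{rr0} gives a projection $p\in A$, $(G_3,\delta_3)$-multiplicative ucp maps $\phi',\psi':C(X)\to pAp$ and a unital homomorphism $\sigma:C(X)\to(1{-}p)A(1{-}p)$ of finite-dimensional range with $\phi\sim_{G_3,\delta_3}\phi'\oplus\sigma$, $\psi\sim_{G_3,\delta_3}\psi'\oplus\sigma$ and $\tau(p)<\delta_3$ for all $\tau$. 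Since $G_2\subset G_3$ and $\delta_3\leq\delta_2$, the choice of $(G_2,\delta_2)$ forces $(\phi'\oplus\sigma)_\#|L=(\psi'\oplus\sigma)_\#|L$, hence $\phi'_\#|L=\psi'_\#|L$; and $\phi',\psi'$ are $(G_1,\delta_1)$-multiplicative because $G_1\subset G_3$ and $\delta_3\leq\delta_1$. As the unital corner $pAp$ of $A$ still has real rank zero, stable rank one and weakly unperforated $K_0$, Theorem \ref{GongLin} applies to $\phi',\psi':C(X)\to pAp$ and produces a unitary $u\in M_{l+1}(pAp)$ and points $x_1,\dots,x_l\in X$ with $\lVert u\diag(\phi'(f),f(x_1),\dots,f(x_l))u^*-\diag(\psi'(f),f(x_1),\dots,f(x_l))\rVert<\ep/7$ for $f\in F$.

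Finally I would adjust $\sigma$ on its finite spectrum. Since $\tau(\sigma(h_i))>\tau(\phi(h_i))-\tau(\phi'(h_i))-\delta_3>\nu-2\delta_3\geq l\delta_3$ for every $\tau$, the mass of $\mu_{\tau\circ\sigma}$ in each $U_i$ exceeds $l\delta_3$; collapsing those point masses onto $y_i$ gives a finite-dimensional-range $\sigma'$ with $\mu_{\tau\circ\sigma'}(\{y_i\})>l\delta_3$, and then, using that $\{y_1,\dots,y_k\}$ is $(F,\ep/7)$-dense, I transfer a mass $\delta_3$ from suitable $y_i$'s onto each $x_j$ to obtain $\sigma''$ with $\mu_{\tau\circ\sigma''}(\{x_j\})>\delta_3$, each replacement costing less than $\ep/7$ on $F$. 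Because $\tau(p)<\delta_3$ and $A$ has strict comparison of projections (Remark \ref{fourclasses} (2)), $p$ is equivalent to a subprojection of the block of $\sigma''$ over each $x_j$, so one can absorb the scalar blocks $f(x_j)$ of the matrix picture into $\sigma''$ and conclude $\phi'\oplus\sigma''\sim_{F,\ep/7}\psi'\oplus\sigma''$. Chaining the seven resulting approximate unitary equivalences
\[
\phi\sim_{F,\ep/7}\phi'\oplus\sigma\sim_{F,\ep/7}\phi'\oplus\sigma'\sim_{F,\ep/7}\phi'\oplus\sigma''\sim_{F,\ep/7}\psi'\oplus\sigma''\sim_{F,\ep/7}\psi'\oplus\sigma'\sim_{F,\ep/7}\psi'\oplus\sigma\sim_{F,\ep/7}\psi
\]
then yields a unitary $u\in A$ with $\lVert u\phi(f)u^*-\psi(f)\rVert<\ep$ on $F$. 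There is essentially no new obstacle beyond the $\mathcal{T}$-case: one only has to check that $pAp$ still satisfies the hypotheses of Theorem \ref{GongLin} — real rank zero, stable rank one and weak unperforation of $K_0$ all pass to unital corners — and that Lemma \ref{rr0}, with its extra parameter $m$, is slotted in exactly where Lemma \ref{TAF} appeared; the rest of the bookkeeping of the constants $G_i,\delta_i$ is identical.
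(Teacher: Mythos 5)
Your proposal is correct and is exactly the argument the paper intends: the paper's own proof of this theorem is the one-line remark that one repeats the proof of Theorem \ref{appunique1} with Lemma \ref{rr0} (carrying the parameter $m$) in place of Lemma \ref{TAF}, which is precisely what you carry out, including the correct observations that $pAp$ retains the hypotheses of Theorem \ref{GongLin} and that strict comparison (Remark \ref{fourclasses} (2)) justifies absorbing the scalar blocks at the points $x_j$.
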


By using the theorems above, 
we obtain the following generalization of \cite[Theorem 3.3]{L07Trans}. 

\begin{thm}\label{appunique3}
Let $X$ be a compact metrizable space and 
let $A\in\mathcal{T}\cup\mathcal{T}'$. 
Let $\phi:C(X)\to A$ be a unital monomorphism. 
Then for any finite subset $F\subset C(X)$ and $\ep>0$, 
there exist a finite subset $L\subset\mathcal{K}(C(X))$, 
a finite subset $G\subset C(X)$ and $\delta>0$ 
such that the following hold. 
If $\psi:C(X)\to A$ is a $(G,\delta)$-multiplicative ucp map satisfying 
$\phi_\#|L=\psi_\#|L$ and 
\[
\lvert\tau(\phi(f))-\tau(\psi(f))\rvert<\delta
\]
for any $\tau\in T(A)$ and $f\in G$, then 
there exists a unitary $u\in A$ such that 
\[
\lVert u\phi(f)u^*-\psi(f)\rVert<\ep
\]
holds for any $f\in F$. 
\end{thm}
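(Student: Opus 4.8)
The plan is to reduce Theorem~\ref{appunique3} to the uniform uniqueness statements already proved, namely Theorem~\ref{appunique1} and Theorem~\ref{appunique2}. The point of those theorems is that they give approximate unitary equivalence provided one has a lower bound $\tau(\phi(h_i))\geq\nu$ on the values of a prescribed finite family of positive functions $h_1,\dots,h_k$. Since here $\phi$ is a fixed unital monomorphism, such a lower bound is automatically available: because $\phi$ is injective, each $\phi(h_i)$ is a nonzero positive element, and since $A$ is simple with a (nonempty, weak-$*$ compact) tracial state space $T(A)$, the function $\tau\mapsto\tau(\phi(h_i))$ is strictly positive and continuous on $T(A)$, hence bounded below by some $\nu_i>0$; take $\nu=\min_i\nu_i>0$.

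Concretely, I would argue as follows. Given $F$ and $\ep$, first apply Theorem~\ref{appunique1} (if $A\in\mathcal{T}$) or Theorem~\ref{appunique2} (if $A\in\mathcal{T}'$, using the bound $m$ on the number of extremal traces, which exists by the definition of $\mathcal{T}'$) to $F$ and $\ep$. This produces a finite subset $L\subset\mathcal{K}(C(X))$ and mutually orthogonal norm-one positive elements $h_1,\dots,h_k\in C(X)$. Now use simplicity and injectivity of $\phi$ together with compactness of $T(A)$ to fix $\nu>0$ with $\tau(\phi(h_i))\geq\nu$ for all $\tau\in T(A)$ and all $i$. Feeding this $\nu$ back into Theorem~\ref{appunique1} or Theorem~\ref{appunique2} yields a finite subset $G_0\subset C(X)$ and $\delta_0>0$. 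Finally set $G=G_0\cup F\cup\{h_1,\dots,h_k\}$ and $\delta=\delta_0$ (shrinking $\delta$ if necessary so that a $(G,\delta)$-multiplicative $\psi$ is sufficiently multiplicative for $\psi_\#|L$ to be defined). If $\psi$ is $(G,\delta)$-multiplicative with $\phi_\#|L=\psi_\#|L$ and $|\tau(\phi(f))-\tau(\psi(f))|<\delta$ for all $\tau\in T(A)$ and $f\in G$, then the hypotheses of Theorem~\ref{appunique1} (resp.\ Theorem~\ref{appunique2}) are met, and we obtain a unitary $u\in A$ with $\lVert u\phi(f)u^*-\psi(f)\rVert<\ep$ for all $f\in F$.

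The only mild subtlety, and the step I would be most careful about, is the order of quantifiers in the two uniqueness theorems: there $L$ and the $h_i$ are chosen before $\nu$, and $G,\delta$ depend on $\nu$. This is harmless here because $\nu$ depends only on $\phi$ (through simplicity of $A$ and injectivity of $\phi$) and not on the candidate $\psi$, so the two-step application above is legitimate; I would simply make sure to state it in that order. A second small point is that $T(A)\neq\emptyset$: this holds since $A$ is unital, simple, stably finite and nuclear/exact (all algebras in $\mathcal{T}\cup\mathcal{T}'$ are stably finite with nonempty trace space), so the lower bound $\nu$ makes sense. No genuine obstacle arises; the content of the theorem is entirely absorbed into Theorems~\ref{appunique1} and \ref{appunique2}, and this statement is essentially the specialization of those to a fixed injective $\phi$.
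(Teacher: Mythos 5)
Your proposal is correct and is essentially the paper's own argument: apply Theorem \ref{appunique1} or \ref{appunique2} to get $L$ and $h_1,\dots,h_k$, use simplicity of $A$ and injectivity of $\phi$ (with compactness of $T(A)$) to obtain $\nu=\min\{\tau(\phi(h_i))\mid\tau\in T(A),\ i\}>0$, and then feed $\nu$ back in to get $G$ and $\delta$. The extra remarks about quantifier order and $T(A)\neq\emptyset$ are fine but not needed beyond what the paper already does.
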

\begin{proof}
Applying Theorem \ref{appunique1} or Theorem \ref{appunique2}, 
we obtain a finite subset $L\subset\mathcal{K}(C(X))$ and 
positive elements $h_1,h_2,\dots,h_k\in C(X)$ of norm one. 
Since $A$ is simple and $\phi$ is injective, 
\[
\nu=\min\{\tau(\phi(h_i))\mid\tau\in T(A),\ i=1,2,\dots,k\}
\]
is positive. 
Using Theorem \ref{appunique1} or Theorem \ref{appunique2} for $\nu$, 
we find a finite subset $G\subset C(X)$ and $\delta>0$. 
It is clear that $G$ and $\delta$ meet the requirement. 
\end{proof}

The following is an immediate consequence of the theorem above. 

\begin{thm}\label{rr0unique}
Let $X$ be a compact metrizable space and 
let $A\in\mathcal{T}\cup\mathcal{T}'$. 
Let $\phi,\psi:C(X)\to A$ be unital monomorphisms. 
Then $\phi$ and $\psi$ are approximately unitarily equivalent 
if and only if $KL(\phi)=KL(\psi)$ and 
$\tau\circ\phi=\tau\circ\psi$ for all $\tau\in T(A)$. 
\end{thm}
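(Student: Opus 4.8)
The plan is to derive Theorem \ref{rr0unique} directly from Theorem \ref{appunique3}. The forward implication is routine: if $\phi$ and $\psi$ are approximately unitarily equivalent, then $KL(\phi)=KL(\psi)$ follows from the general fact recorded in Section 2 that approximately unitarily equivalent homomorphisms induce the same element of $KL$, and $\tau\circ\phi=\tau\circ\psi$ for every $\tau\in T(A)$ follows because each $\tau$ is norm-continuous and unitarily invariant (so $\tau(\phi(x))=\lim_n\tau(u_n\psi(x)u_n^*)=\tau(\psi(x))$).

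For the converse, suppose $KL(\phi)=KL(\psi)$ and $\tau\circ\phi=\tau\circ\psi$ for all $\tau\in T(A)$. I would fix an increasing sequence of finite subsets $F_1\subset F_2\subset\cdots$ of $C(X)$ whose union is dense, together with a sequence $\ep_n\to 0$. For each $n$, apply Theorem \ref{appunique3} with the monomorphism $\phi$, the finite set $F_n$, and the tolerance $\ep_n$; this yields a finite set $L_n\subset\mathcal{K}(C(X))$, a finite set $G_n\subset C(X)$, and $\delta_n>0$ with the stated property. Now $\psi$ itself is a genuine unital monomorphism, hence in particular a $(G_n,\delta_n)$-multiplicative ucp map for every $n$; moreover $\psi_\#|L_n=KL(\psi)|L_n=KL(\phi)|L_n=\phi_\#|L_n$ since $KL(\phi)=KL(\psi)$ and these partial homomorphisms are exactly the restrictions of the $KL$-classes; and $\lvert\tau(\phi(f))-\tau(\psi(f))\rvert=0<\delta_n$ for all $\tau\in T(A)$ and $f\in G_n$ by hypothesis. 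So Theorem \ref{appunique3} produces a unitary $u_n\in A$ with $\lVert u_n\phi(f)u_n^*-\psi(f)\rVert<\ep_n$ for all $f\in F_n$.

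Finally I would check that $(u_n)_n$ witnesses approximate unitary equivalence: given $x\in C(X)$ and $\eta>0$, pick $f\in\bigcup_m F_m$ with $\lVert x-f\rVert<\eta/3$, say $f\in F_N$; then for $n\geq N$ with $\ep_n<\eta/3$ we have $\lVert u_n\phi(x)u_n^*-\psi(x)\rVert\leq\lVert u_n(\phi(x)-\phi(f))u_n^*\rVert+\lVert u_n\phi(f)u_n^*-\psi(f)\rVert+\lVert\psi(f)-\psi(x)\rVert<\eta$, so $\psi(x)=\lim_n u_n\phi(x)u_n^*$. This completes the argument. There is essentially no obstacle here — all the work has been done in Theorem \ref{appunique3}, and the present statement is just the standard packaging of an approximate-uniqueness theorem into an approximate-unitary-equivalence statement; the only points to be careful about are verifying that a genuine homomorphism satisfies the almost-multiplicativity hypotheses for every prescribed $(G,\delta)$ and that $KL(\phi)=KL(\psi)$ really does give $\phi_\#|L=\psi_\#|L$ on any finite $L$, both of which are immediate from the definitions in Section 2.
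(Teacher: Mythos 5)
Your proposal is correct and is exactly the paper's route: the paper states Theorem \ref{rr0unique} as an immediate consequence of Theorem \ref{appunique3}, and your argument is just the standard spelling-out of that deduction (a genuine homomorphism is $(G,\delta)$-multiplicative for every $(G,\delta)$, $KL(\phi)=KL(\psi)$ gives $\phi_\#|L=\psi_\#|L$, and exhausting $C(X)$ by finite sets with $\ep_n\to0$ yields the unitaries). No gaps.
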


\begin{cor}\label{rr0uniqueAH}
Let $C$ be a unital AH algebra and 
let $A\in\mathcal{T}\cup\mathcal{T}'$. 
Let $\phi,\psi:C\to A$ be unital monomorphisms. 
Then $\phi$ and $\psi$ are approximately unitarily equivalent 
if and only if $KL(\phi)=KL(\psi)$ and 
$\tau\circ\phi=\tau\circ\psi$ for all $\tau\in T(A)$. 
\end{cor}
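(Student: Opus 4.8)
The plan is to derive the corollary from Theorem \ref{rr0unique} (the case $C=C(X)$) by the same reductions used in the proof of Corollary \ref{NWexist2}, followed by an inductive-limit argument. The ``only if'' direction needs no work: approximate unitary equivalence implies $KL(\phi)=KL(\psi)$, and $\tau\circ\phi=\tau\circ\psi$ for every $\tau\in T(A)$ since tracial states are invariant under inner automorphisms.

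For the ``if'' direction I would write $C=\overline{\bigcup_n C_n}$ with $C_n=\bigoplus_i p_{n,i}(C(X_{n,i})\otimes M_{k_{n,i}})p_{n,i}$, the $X_{n,i}$ compact metrizable and $1_{C_n}=1_C$. Given a finite $F\subset C$ and $\ep>0$, pick $n$ and a finite $F'\subset C_n$ with every element of $F$ within $\ep/3$ of $F'$; it then suffices to find a unitary $u\in A$ with $\lVert u\phi(c)u^*-\psi(c)\rVert<\ep/3$ for $c\in F'$. Since $\phi,\psi$ are injective, $\phi|_{C_n},\psi|_{C_n}$ are unital monomorphisms, and they inherit $KL(\phi|_{C_n})=KL(\psi|_{C_n})$ and $\tau\circ\phi|_{C_n}=\tau\circ\psi|_{C_n}$ by functoriality of $KL$. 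Thus it is enough to prove: any two unital monomorphisms $\Phi,\Psi:C'\to A$ from a building block $C'=\bigoplus_i p_i(C(X_i)\otimes M_{k_i})p_i$ with $KL(\Phi)=KL(\Psi)$ and $\tau\circ\Phi=\tau\circ\Psi$ for all $\tau$ are approximately unitarily equivalent.

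For this I would run the reductions of Corollary \ref{NWexist2} in turn. Since $KL(\Phi)=KL(\Psi)$, the projections $\Phi(1_{B_i})$ and $\Psi(1_{B_i})$ (with $B_i$ the $i$-th summand) have equal $K_0$-class, hence are Murray--von Neumann equivalent as $A$ has stable rank one; conjugating $\Psi$ by a unitary of $A$ I may assume $\Phi(1_{B_i})=\Psi(1_{B_i})=:e_i$, so $\Phi=\bigoplus_i\Phi_i$ and $\Psi=\bigoplus_i\Psi_i$ with $\Phi_i,\Psi_i:B_i\to e_iAe_i$, and assembling unitaries out of the corners $e_iAe_i$ reduces matters to a single block $B=p(C(X)\otimes M_k)p$ with target a corner $e_iAe_i$, which still lies in $\mathcal{T}$ (tracial rank zero passes to corners) or in $\mathcal{T}'$ (the defining properties of $\mathcal{T}'$ -- real rank zero, stable rank one, weak unperforatedness of $K_0$, and finitely many extremal traces -- all pass to full corners). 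Then, exactly as in Corollary \ref{NWexist2}, I choose $l$ and a projection $q$ with $p\otimes e\le q\sim 1_{C(X)}\otimes r$, so that $C_0:=qM_l(B)q$, isomorphic to a matrix algebra over $C(X)$, contains $B$ as a full corner; amplifying $\Phi,\Psi$ by $M_l$ and conjugating by a unitary commuting with $1_A\otimes e$ to arrange $(\Phi\otimes\id)(q)=(\Psi\otimes\id)(q)$, restriction to $C_0$ yields unital monomorphisms into a single corner of $A\otimes M_l$ with matching $KL$ and traces. The case of a matrix algebra over $C(X)$ reduces to $C(X)$ by compressing with $\Phi(1\otimes e_{11})$ and identifying the two systems of matrix units (their $(1,1)$-corners being equivalent), after which Theorem \ref{rr0unique} applies. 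Finally, the elementary fact that approximate unitary equivalence of $\Phi\otimes\id_{M_N}$ and $\Psi\otimes\id_{M_N}$ in $A\otimes M_N$ forces that of $\Phi$ and $\Psi$ in $A$ (normalise the conjugating unitaries so that they almost fix $1_A\otimes e_{11}$, then restrict to that corner) carries the conclusion back down the chain.

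I expect the main obstacle to be the bookkeeping in these reductions rather than any new idea: one must verify that $KL$, the tracial data, unitality and injectivity all transfer correctly through the matrix amplifications, the corner compressions, and the identifications $T(pAp)\cong T(A)$ and $\underline{K}(pAp)\cong\underline{K}(A)$ for full corners, and that $\mathcal{T}$ and $\mathcal{T}'$ are closed under full corners and matrix amplification. All the real content sits in Theorem \ref{rr0unique}.
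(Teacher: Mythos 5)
Your proposal is correct and follows essentially the same route as the paper: reduce along the inductive limit to a building block, reduce a corner $p(C(X)\otimes M_k)p$ to a full matrix algebra over $C(X)$ via the projection $q$ exactly as in Corollary \ref{NWexist2}, compress by a minimal projection after matching the matrix-unit systems, apply Theorem \ref{rr0unique} in the corner of $A$ (which stays in $\mathcal{T}\cup\mathcal{T}'$), and descend back through the amplifications. The only cosmetic difference is that the paper absorbs the finite direct sums into a single block $p(C(X)\otimes M_k)p$ over a possibly disconnected $X$ (and shrinks $X$ so that $p$ has everywhere positive rank) rather than splitting summand by summand as you do, which changes nothing of substance.
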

\begin{proof}
Although the proof is essentially the same as \cite[Corollary 4.8]{L07Trans}, 
we present it for completeness. 
Without loss of generality, 
we may assume $C=p(C(X)\otimes M_k)p$, 
where $X$ is a compact metrizable space and 
$p\in C(X)\otimes M_k$ is a non-zero projection. 
We may further assume that 
the rank of $p(x)\in M_k$ is strictly positive for every $x\in X$. 

We first consider the case $p=1\in C(X)\otimes M_k$. 
It is easy to see that there exists a unitary $u\in A$ such that 
$\phi(1\otimes a)=u\psi(1\otimes a)u^*$ holds for any $a\in M_k$. 
Let $e$ be a minimal projection of $M_k$. 
Then $\phi'(f)=\phi(f\otimes e)$ and $\psi'(f)=u\psi(f\otimes e)u^*$ 
are unital monomorphisms from $C(X)$ to $\phi(e)A\phi(e)$. 
By Theorem \ref{rr0unique}, they are approximately unitarily equivalent. 
Hence $\phi$ and $\psi$ are approximately unitarily equivalent. 

Let us consider the general case. 
There exist $l\in\N$ and 
a projection $q\in C\otimes M_l\subset C(X)\otimes M_{kl}$ such that 
$p\otimes e$ is a subprojection of $q$ and $q$ is 
Murray-von Neumann equivalent to $1_{C(X)}\otimes r$, 
where $e\in M_l$ is a minimal projection of $M_l$ and 
$r\in M_{kl}$ is a projection of rank $k$. 
By the argument above, 
the restrictions of $\phi\otimes\id_{M_l}$ and $\psi\otimes\id_{M_l}$ 
to $q(C\otimes M_l)q$ are approximately unitarily equivalent. 
It follows that 
their restrictions to $(p\otimes e)(C\otimes M_l)(p\otimes e)=C$ 
are also approximately unitarily equivalent, 
which completes the proof. 
\end{proof}

In Section6 we will generalize the results above to the case 
that the target algebra $A$ belongs to $\mathcal{C}\cup\mathcal{C}'$.

\section{Homotopy of unitaries}

In this section, we prove the so called basic homotopy lemma 
for $A$ in $\mathcal{T}\cup\mathcal{T}'$ 
(Theorem \ref{Basic} and Theorem \ref{Basic2}). 
The basic idea of the proof is similar to that of \cite[Theorem 8.1]{L0612}, 
but there are two main differences. 
One is the use of Theorem \ref{NWexist}, 
which claims the existence of a unital monomorphism $\phi:C(X)\to A$ 
realizing the given $\kappa\in KL(C(X),A)_{+,1}$ and 
$\lambda:T(A)\to T(C(X))$. 
The other point is that 
we allow $G\subset C(X)$ and $\delta>0$ in Theorem \ref{Basic} 
to depend on the given homomorphism $\phi:C(X)\to A$. 
Although, as shown in \cite{L0612}, 
it is possible to state the theorem in a more general form, 
we do not pursue this here 
because the actual application discussed in Section 6 does not need 
that general form. 
These two points enable us to simplify the proof given in \cite{L0612}. 

We let $\T$ denote the unit circle in the complex plane and 
let $z\in C(\T)$ be the identity function 
$z(\exp(\pi\sqrt{-1}t))=\exp(\pi\sqrt{-1}t)$. 
The following is a variant of \cite[Lemma 6.4]{L0612}. 

\begin{lem}\label{fullspec}
Let $X$ be a compact metrizable space and 
let $A\in\mathcal{T}\cup\mathcal{T}'$. 
For any finite subsets $F\subset C(X)$, $\widetilde F\subset C(X\times\T)$ 
and $\ep>0$, 
there exist a finite subset $G\subset C(X)$ and $\delta>0$ 
such that the following hold. 
For any $k\in\N$, 
any unital monomorphism $\phi:C(X)\to A$ and a unitary $u\in A$ 
satisfying $\lVert[\phi(f),u]\rVert<\delta$ for any $f\in G$, 
there exist a path of unitaries $w:[0,1]\to A$ 
and an $(\widetilde F,\ep)$-multiplicative ucp map $\psi:C(X\times\T)\to A$ 
such that 
\[
\lVert w(0)-u\rVert<\ep,\quad 
\lVert w(1)-\psi(1\otimes z)\rVert<\ep,\quad \Lip(w)\leq\pi, 
\]
\[
\lVert[\phi(f),w(t)]\rVert<\ep,\quad 
\lVert\psi(f\otimes1)-\phi(f)\rVert<\ep
\]
hold for any $f\in F$ and $t\in[0,1]$, and 
\[
\lvert\tau(\psi(f\otimes z^j))\rvert<\ep\lVert f\rVert
\]
holds for any $\tau\in T(A)$, $f\in C(X)$ and $j\in\Z$ 
with $1\leq\lvert j\rvert<k$. 
\end{lem}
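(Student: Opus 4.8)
The plan is to reduce the statement, by a fine spectral decomposition of the pair $(\phi,u)$, to a one–variable ``spectrum spreading'' problem inside corners of $A$, and to solve the latter by an explicit \emph{balanced clock} construction. Given $F$, $\widetilde F$ and $\ep$, I would first fix a finite open cover $\{W_1,\dots,W_l\}$ of $X$ so fine that every $f\in F$, and every function of $X$ occurring in $\widetilde F$, varies by less than $\ep/(8l)$ on each $W_i$, and choose points $x_i\in W_i$. I then take $G\subset C(X)$ to contain $F$, the $X$–functions of $\widetilde F$, and a partition of unity subordinate to the cover together with a spanning set for a finite–dimensional subspace of $C(X)$ adapted to it, and take $\delta>0$ small. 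The point of this choice is that $\lVert[\phi(f),u]\rVert<\delta$ for $f\in G$ then forces (this is standard, using only that $A$ has real rank zero) the existence of mutually orthogonal projections $p_1,\dots,p_l\in A$ with $\sum_ip_i=1$, each in the hereditary subalgebra generated by $\phi(C_0(W_i))$, with $\lVert[\phi(f),p_i]\rVert<\ep/(8l)$ and $\lVert[u,p_i]\rVert<\ep/(8l)$ for $f\in F$ and the $X$–functions of $\widetilde F$, and with $p_i\phi(f)p_i\approx_{\ep/(8l)}f(x_i)p_i$ for $f\in F$.

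Everything from here is done after $k$, $\phi$ and $u$ are given. Writing $B_i:=p_iAp_i$ (again in $\mathcal T\cup\mathcal T'$) and choosing unitaries $u_i\in U(B_i)$ with $p_iup_i\approx u_i$, the lemma reduces to the corner statement: for any $\eta>0$ there is a path $w_i\colon[0,1]\to U(B_i)$ with $\Lip(w_i)\le\pi$, $\lVert w_i(0)-u_i\rVert<\eta$, and $v_i:=w_i(1)$ satisfying $\lvert\tau(v_i^j)\rvert<\eta$ for all $\tau\in T(B_i)$ and $1\le\lvert j\rvert<k$. To prove this I put $N:=kM$ with $M$ large (to be fixed last), and find a unital copy of $M_{2N}$ inside a corner $e_iB_ie_i$ with $\tau(1-e_i)<\eta/2$ for every $\tau$ and with $M_{2N}$ almost commuting with $u_i$; for $A\in\mathcal T$ this is the usual approximately central matrix subalgebra of a simple algebra of tracial rank zero, and for $A\in\mathcal T'$ one arranges an analogue using strict comparison of projections, finiteness of the extremal trace set, and density of finite–spectrum unitaries in $U_0$. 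Decomposing $e_iB_ie_i=M_{2N}\otimes D_i$, one has $e_iu_ie_i\approx 1_{M_{2N}}\otimes d_i$ for a unitary $d_i\in D_i$.

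The heart is the balanced clock: let $c\in M_{2N}$ be the diagonal unitary whose $2N$ eigenvalues are $e^{\pm\pi\sqrt{-1}\,s/N}$ for $0\le s\le N-1$, and let $c_t\in M_{2N}$ be obtained by replacing $s/N$ with $ts/N$. Each eigenvalue sweeps an arc of length strictly less than $\pi$ as $t$ runs over $[0,1]$, so $\Lip(c_t)\le\pi$; and because the eigenvalues are symmetric about $1$, summing the geometric series gives $\tr(c^j)=0$ for $j$ even and $\tr(c^j)=1/N$ for $j$ odd, hence $\lvert\tr(c^j)\rvert\le 1/M$ for all $1\le\lvert j\rvert<k$. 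I then set $v_i:=(c\otimes d_i)\oplus u_i''$ and $w_i(t):=(c_t\otimes d_i)\oplus u_i''$, where $u_i''$ is a unitary in $(p_i-e_i)B_i(p_i-e_i)$ close to $(p_i-e_i)u_i(p_i-e_i)$; then $w_i(0)\approx u_i$ (since $u_i$ almost commutes with $e_i$), $w_i(1)=v_i$, $\Lip(w_i)\le\pi$, $[v_i]=[u_i]$ in $K_1(B_i)$, and, using $\tau(e_{ss}d)=\tau(e_{11}d)$ for matrix units $e_{ss}$ of $M_{2N}$ and $d\in D_i$, $\lvert\tau(v_i^j)\rvert\le\lvert\tr(c^j)\rvert+\tau(p_i-e_i)<1/M+\eta/2<\eta$ once $M$ is large. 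To assemble, put $w:=\sum_iw_i$ and $v:=w(1)=\sum_iv_i$; since $\phi_0(f):=\sum_if(x_i)p_i$ is a genuine unital $*$–homomorphism $C(X)\to A$ whose range commutes with $v$, I take $\psi\colon C(X\times\T)=C(X)\otimes C(\T)\to A$ to be the genuine $*$–homomorphism $f\otimes g\mapsto\phi_0(f)\,g(v)$. Then $\psi$ is $(\widetilde F,\ep)$–multiplicative (in fact a homomorphism), $\psi(f\otimes1)=\phi_0(f)\approx_\ep\phi(f)$ for $f\in F$, $\psi(1\otimes z)=v=w(1)$, $\Lip(w)=\max_i\Lip(w_i)\le\pi$, and $\lVert[\phi(f),w(t)]\rVert<\ep$ for $f\in F$ by the commutator bounds on the $p_i$, the estimate $p_i\phi(f)p_i\approx f(x_i)p_i$, and orthogonality of the $p_i$; finally $\tau(\psi(f\otimes z^j))=\sum_if(x_i)\tau(v_i^j)$, so $\lvert\tau(\psi(f\otimes z^j))\rvert\le\lVert f\rVert\sum_i\lvert\tau(v_i^j)\rvert<\ep\lVert f\rVert$ for $1\le\lvert j\rvert<k$ once $\eta<\ep/l$.

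The main obstacle is to produce the Lipschitz bound $\pi$, which is tight, \emph{simultaneously} with the vanishing of the Fourier coefficients $\tr(c^j)$ up to order $k$: a clock winding once around $\T$ makes all those traces vanish but costs path length $2\pi$, whereas a single half–circle clock costs only length $\pi$ but has odd Fourier coefficients of size about $2/(\pi j)$; it is exactly the symmetric, two–directional balanced clock that meets both requirements, and getting this device (and the accompanying matrix–unit trace bookkeeping) right is the crux. A secondary subtlety is the order of quantifiers — $k$ is given only after $G$ and $\delta$ — so the cover $\{W_i\}$ and hence $l$ may depend only on $F$, $\widetilde F$, $\ep$, while $M_{2N}$ and $M$ are chosen afterwards and taken large in terms of $k$ and $l$; this causes no trouble precisely because the trace estimate is read off from the genuine homomorphism $\phi_0$ rather than from $\phi$ itself, so no hidden dependence of $G$ or $\delta$ on $k$ is needed. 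For $A\in\mathcal T'$ the finiteness of the extremal trace set enters in building the substitute for the approximately central matrix subalgebra.
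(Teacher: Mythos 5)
Your reduction step is where the argument breaks down. You ask for mutually orthogonal projections $p_1,\dots,p_l$ with $\sum_i p_i=1$, almost commuting with $\phi(F)$ and with $u$, and with $p_i\phi(f)p_i\approx_{\ep/(8l)}f(x_i)p_i$; this is equivalent to approximating $\phi$ on $F$, in norm, by the finite--dimensional--range homomorphism $\phi_0(f)=\sum_i f(x_i)p_i$, and your final map $\psi(f\otimes g)=\phi_0(f)g(v)$ then has $\psi(f\otimes 1)$ of finite spectrum. This is obstructed by $K$-theory and is not a consequence of real rank zero: take $X=\T$, $F=\{z\}$, and (via Theorem \ref{NWexist}) a unital monomorphism $\phi:C(\T)\to A$ with $K_1(\phi)\neq 0$. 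If $v_0$ is any unitary with finite spectrum and $\lVert\phi(z)-v_0\rVert<2$, then $\phi(z)v_0^*=e^{ih}$, so $[\phi(z)]=[v_0]=0$ in $K_1(A)$, a contradiction; hence for $\ep<2$ neither your projections $p_i$ nor any $\psi$ of your form can exist. So the claimed ``standard'' spectral decomposition of the pair $(\phi,u)$ with $\sum_i p_i=1$ is false in general, and the object you build cannot satisfy $\lVert\psi(f\otimes1)-\phi(f)\rVert<\ep$.

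The missing idea is that the decomposition can only be achieved \emph{up to a corner of small trace}, which is exactly what Lemmas \ref{rr0} and \ref{TAF} provide: the paper first packages $\phi$ and $u$ into a single almost multiplicative ucp map $\phi_0:C(X\times\T)\to A$, then writes $\phi_0\approx\phi_0'\oplus\sigma$ with $\sigma$ a genuine finite--dimensional--range homomorphism and $\phi_0'$ an almost multiplicative ucp map on $pAp$ with $\tau(p)<\min\{\ep/8,\ep^2\}$; the corner $pAp$ absorbs the $K$-theoretic obstruction, and its contribution to $\tau(\psi(f\otimes z^j))$ is controlled by Cauchy--Schwarz, $\lvert\tau(\phi_0'(f\otimes z^j))\rvert\le\lVert f\rVert\tau(p)^{1/2}$. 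The spectrum spreading is then done only on the finite--dimensional part: one splits each $p_i$ into $k$ subprojections $q_{i,j}$ of \emph{equal} trace (possible after the data $k$ is given, since $\Ima D_A$ is dense in $\Aff(T(A))$) and moves the eigenvalue $y_i$ to the $k$-th roots of unity along shorter arcs, which gives $\Lip\le\pi$ trivially and makes the Fourier coefficients of the big part vanish exactly because the roots of unity sum to zero; no balanced clock, and no approximately central matrix corner (which you would still need to justify for $A\in\mathcal{T}'$), is required. Your clock computation itself is correct, but it addresses a difficulty that disappears once the small-corner decomposition is in place, and it does not repair the false reduction.
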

\begin{proof}
Without loss of generality, 
we may assume that all the elements of $F$ are of norm one. 
Applying Lemma \ref{rr0} or Lemma \ref{TAF} to 
\[
G_1=\widetilde F\cup
\{f\otimes1\mid f\in F\}\cup\{1\otimes z\}\subset C(X\times\T)
\]
and $\delta_1=\min\{\ep/8,\ep^2\}$, 
we obtain a finite subset $G_2\subset C(X\times\T)$ and $\delta_2>0$. 
We may assume that $G_2$ contains $G_1$ and 
that $\delta_2$ is less than $\delta_1$. 
Clearly there exist a finite subset $G\subset C(X)$ and $\delta>0$ 
such that the following holds: 
If $\phi:C(X)\to A$ is a unital monomorphism and $u\in A$ is a unitary 
satisfying $\lVert[\phi(f),u]\rVert<\delta$ for any $f\in G$, 
then one can find a $(G_2,\delta_2)$-multiplicative ucp map 
$\phi_0:C(X\times\T)\to A$ such that 
\[
\lVert\phi_0(1\otimes z)-u\rVert<\delta_2,\quad 
\lVert\phi_0(f\otimes1)-\phi(f)\rVert<\delta_2
\]
for every $f\in G_2$. 

Suppose that we are given $k\in\N$, 
a unital monomorphism $\phi:C(X)\to A$ and a unitary $u\in A$ 
satisfying $\lVert[\phi(f),u]\rVert<\delta$ for every $f\in G$. 
We find $\phi_0:C(X\times\T)\to A$ as above. 
By using Lemma \ref{rr0} or Lemma \ref{TAF}, 
there exist a projection $p\in A$, 
a $(G_1,\delta_1)$-multiplicative ucp map $\phi_0':C(X\times\T)\to pAp$ and 
a unital homomorphism $\sigma:C(X\times\T)\to(1{-}p)A(1{-}p)$ 
with finite dimensional range such that 
\[
\lVert\phi_0(f)-(\phi_0'\oplus\sigma)(f)\rVert<\delta_1\quad 
\forall f\in G_1
\]
and $\tau(p)<\delta_1$ for any $\tau\in T(A)$. 
We may further assume that 
there exists a unitary $u'\in pAp$ such that 
$\lVert u'-\phi_0'(1\otimes z)\rVert<\delta_1$. 
Since $\sigma$ has finite dimensional range, 
one can find $x_1,x_2,\dots,x_l\in X$, $y_1,y_2,\dots,y_l\in\T$ and 
projections $p_1,p_2,\dots,p_l\in A$ such that 
\[
\sum_{i=1}^lp_i=1{-}p,\quad 
\sigma(f\otimes g)=\sum_{i=1}^lf(x_i)g(y_i)p_i
\]
holds for any $f\in C(X)$ and $g\in C(\T)$. 
By replacing each $p_i$ with its subprojection if necessary, 
we may assume that $D_A([p_i])$ belongs to $k\Ima(D_A)$, 
because $\Ima D_A$ is dense in $\Aff(T(A))$. 
Choose projections $q_{i,j}$ for $i=1,2,\dots,l$ and $j=1,2,\dots,k$ 
so that 
\[
\sum_{j=1}^kq_{i,j}=p_i,\quad kD_A([q_{i,j}])=D_A([p_i])\quad 
\forall j=1,2,\dots,k. 
\]
Define a homomorphism $\sigma':C(X\times\T)\to(1{-}p)A(1{-}p)$ 
with finite dimensional range by 
\[
\sigma'(f\otimes g)=\sum_{i=1}^l\sum_{j=1}^kf(x_i)g(\zeta^j)q_{i,j}, 
\]
where $\zeta=\exp(2\pi\sqrt{-1}/k)$. 
Define a ucp map $\psi:C(X\times\T)\to A$ by $\psi=\phi_0'\oplus\sigma'$. 
It is clear that $\psi$ is $(G_1,\delta_1)$-multiplicative, 
and hence is $(\widetilde F,\ep)$-multiplicative. 
Moreover, one has 
\[
\psi(f\otimes1)=\phi_0'(f\otimes1)\oplus\sigma'(f\otimes1)
=\phi_0'(f\otimes1)\oplus\sigma(f\otimes1)
\approx_{\delta_1}\phi_0(f\otimes1)\approx_{\delta_2}\phi(f)
\]
for any $f\in F$. 
For any $\tau\in T(A)$, $f\in C(X)$ and $j\in\Z$ with $1\leq\lvert j\rvert<k$, 
it is easy to see 
\begin{align*}
\lvert\tau(\psi(f\otimes z^j))\rvert
&\leq\lvert\tau(\phi_0'(f\otimes z^j))\lvert
+\lvert\tau(\sigma'(f\otimes z^j))\rvert \\
&=\lvert\tau(\phi_0'(f\otimes z^j))\lvert \\
&\leq\lVert f\rVert\tau(p)^{1/2}<\ep\lVert f\rVert. 
\end{align*}
We construct a path of unitaries $w:[0,1]\to A$. 
By the definition of $\sigma'$, 
we can find a path of unitaries $v:[0,1]\to(1{-}p)A(1{-}p)$ such that 
\[
v(0)=\sigma(1\otimes z),\quad v(1)=\sigma'(1\otimes z),\quad 
\Lip(v)\leq\pi
\]
and $[q_{i,j},v(t)]=0$ for any $i,j$ and $t\in[0,1]$. 
Define $w:[0,1]\to U(A)$ by $w(t)=u'\oplus v(t)$. 
Evidently we have 
\[
w(0)=u'\oplus\sigma(1\otimes z)
\approx_{\delta_1}\phi_0'(1\otimes z)\oplus\sigma(1\otimes z)
\approx_{\delta_1}\phi_0(1\otimes z)\approx_{\delta_2}u, 
\]
\[
w(1)=u'\oplus\sigma'(1\otimes z)
\approx_{\delta_1}\phi_0'(1\otimes z)\oplus\sigma'(1\otimes z)
=\psi(1\otimes z). 
\]
and $\Lip(w)\leq\pi$. 
Besides, for any $f\in F$ and $t\in[0,1]$, one can verify 
\begin{align*}
[\phi(f),w(t)]&\approx_{2\delta_2}[\phi_0(f\otimes1),w(t)]
\approx_{2\delta_1}[(\phi_0'\oplus\sigma)(f\otimes1),u'\oplus v(t)]\\
&=[\phi_0'(f\otimes1),u']
\approx_{2\delta_1}[\phi_0'(f\otimes1),\phi_0'(1\otimes z)]
\approx_{2\delta_1}0, 
\end{align*}
thereby completing the proof. 
\end{proof}

\begin{rem}
In the lemma above, for $A$ in $\mathcal{T}$, one can see that 
$G\subset C(X)$ and $\delta$ depend only on $F\subset C(X)$ and $\ep$. 
For $A$ in $\mathcal{T}'$, 
$G\subset C(X)$ and $\delta$ depend only on 
$F\subset C(X)$, $\ep$ and the cardinality of extremal tracial states on $A$. 
\end{rem}

The following is a generalization of \cite[Corollary 8.4]{L0612}. 

\begin{thm}\label{Basic}
Let $X$ be a path connected compact metrizable space and 
let $A\in\mathcal{T}\cup\mathcal{T}'$. 
Let $\phi:C(X)\to A$ be a unital monomorphism. 
For any finite subset $F\subset C(X)$ and $\ep>0$, 
there exist a finite subset $L\subset\mathcal{K}(C(X))$, 
a finite subset $G\subset C(X)$ and $\delta>0$ 
such that the following hold. 
If $u\in A$ is a unitary satisfying 
\[
\lVert[\phi(f),u]\rVert<\delta\quad \forall f\in G\quad\text{and}\quad 
\Bott(\phi,u)(x)=0\quad \forall x\in L, 
\]
then there exists a path of unitaries $w:[0,1]\to A$ such that 
\[
w(0)=u,\quad w(1)=1,\quad \Lip(w)<2\pi+\ep
\]
and 
\[
\lVert[\phi(f),w(t)]\rVert<\ep\quad \forall f\in F,\ t\in[0,1]. 
\]
\end{thm}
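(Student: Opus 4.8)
The plan is to use the Jiang-Su stability of $A$ together with Theorem~\ref{NWexist} to build, from the data $(\phi,u)$, an auxiliary pair of unital monomorphisms $C(X\times\T)\to A$ that are approximately unitarily equivalent via Theorem~\ref{rr0unique}, and then to read off a path of unitaries from that equivalence. Concretely, first I would apply Lemma~\ref{fullspec} to $\phi$ and to suitably chosen finite sets $F$, $\widetilde F$ and a tolerance $\ep'$ (to be specified during the proof, much smaller than $\ep$); this produces $G\subset C(X)$, $\delta>0$, and, given a unitary $u$ that $\delta$-commutes with $\phi(G)$, a path $w_0:[0,1]\to A$ with $\Lip(w_0)\le\pi$ from $u$ (approximately) to $\psi(1\otimes z)$ for an $(\widetilde F,\ep')$-multiplicative ucp map $\psi:C(X\times\T)\to A$ with $\psi(f\otimes 1)\approx\phi(f)$ and with $|\tau(\psi(f\otimes z^j))|$ small for $1\le|j|<k$. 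The hypothesis $\Bott(\phi,u)(x)=0$ for $x$ in a large finite subset $L$ of $\mathcal{K}(C(X))$ should be arranged to force $KL(\psi)=KL(\phi\times \mathbf{1})$, where $\phi\times\mathbf{1}:C(X\times\T)=C(X)\otimes C(\T)\to A$ is the map sending $f\otimes z\mapsto 1$; this is exactly the point of the $\Bott$ obstruction, since $\Bott(\phi,u)$ computes the $K$-theoretic difference between the two ways of completing $\phi$ by a unitary.

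Next I would deal with the tracial data. The map $\phi\times\mathbf{1}$ is not a monomorphism (it kills everything depending on the $\T$-variable), so instead I would use Theorem~\ref{NWexist} or Corollary~\ref{NWexist2} to produce an honest unital monomorphism $\Psi:C(X\times\T)\to A$ with $KL(\Psi)=KL(\phi\times\mathbf{1})$ and $T(\Psi)$ equal to the trace-simplex map induced by $\phi$ composed with the projection $X\times\T\to X$ and normalized Lebesgue measure on $\T$ — this uses path-connectedness of $X\times\T$ and strict positivity of that measure. Then both $\psi$ and $\Psi$ should have the same $KL$-class, and, because the Fourier coefficients $\tau(\psi(f\otimes z^j))$ were made small, $\tau\circ\psi$ is close to $\tau\circ\Psi$ on the relevant finite set; invoking Theorem~\ref{appunique3} (the approximate-uniqueness statement for monomorphisms into $\mathcal{T}\cup\mathcal{T}'$, applied with $\Psi$ as the fixed monomorphism) gives a unitary $v\in A$ with $v\psi(\cdot)v^*\approx_{\ep'}\Psi(\cdot)$ on a prescribed finite subset of $C(X\times\T)$.

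Finally I would assemble the path. Since $\Psi$ is a genuine homomorphism on $C(X\times\T)$ and $X$ is path connected, the unitary $\Psi(1\otimes z)$ can be connected to $1$ inside $A$ by a path $w_2$ of the form $t\mapsto\Psi(1\otimes z_t)$, where $z_t$ interpolates $z$ to the constant function $1$ along a path in $X\times\T$ that is constant in the $X$-coordinate and winds down in $\T$; this path has Lipschitz constant at most $\pi$ and commutes exactly with $\Psi(f\otimes 1)=\Psi(\phi$-part$)$, hence approximately with $\phi(F)$. Concatenating $w_0$ (from $u$ to $\psi(1\otimes z)$, length $\le\pi$), then $\mathrm{Ad}\,v$ applied appropriately to pass from $\psi(1\otimes z)$ to $\Psi(1\otimes z)$ up to a short correction path, then $w_2$ (from $\Psi(1\otimes z)$ to $1$, length $\le\pi$), and reparametrizing, yields $w:[0,1]\to A$ with $w(0)=u$, $w(1)=1$, total length (hence Lipschitz constant after affine reparametrization) just over $2\pi$, and $\|[\phi(f),w(t)]\|<\ep$ throughout provided $\ep'$ and the finite sets were chosen finely enough. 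The main obstacle I anticipate is the bookkeeping needed to guarantee $KL(\psi)=KL(\Psi)$ purely from $\Bott(\phi,u)|_L=0$ on a finite set — i.e. choosing $L$ large enough that vanishing of $\Bott$ on $L$ controls all of $\underline{K}$ relevant to distinguishing $\psi$ from $\phi\times\mathbf{1}$, using the Künneth/multicoefficient machinery — and the companion estimate that the small Fourier coefficients translate into $\tau\circ\psi\approx\tau\circ\Psi$; closing both of these uniformly in $A\in\mathcal{T}\cup\mathcal{T}'$ is where the real work lies, with the $2\pi+\ep$ bound on $\Lip(w)$ coming out automatically once the two length-$\pi$ pieces are in hand.
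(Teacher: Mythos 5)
Your overall strategy mirrors the paper's: use Lemma \ref{fullspec} to trade $u$ for $\psi(1\otimes z)$ with $\psi:C(X\times\T)\to A$ almost multiplicative, $\psi(f\otimes1)\approx\phi(f)$ and small Fourier coefficients; produce a genuine comparison monomorphism on $C(X\times\T)$ with the class ``$KL(\phi)\oplus 0$'' and trace data $T(\phi)\otimes(\text{Haar})$ via Theorem \ref{NWexist}; match it with $\psi$ by Theorem \ref{appunique3}, using the Bott-vanishing hypothesis for the finite $\underline{K}$-data (only the finite-set statement $\psi_\#|L=\Psi_\#|L$ is meaningful for a ucp map, but that is all the uniqueness theorem needs) and the Fourier estimate for the traces; then splice two length-$\pi$ paths. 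The genuine gap is in your last step: you claim $\Psi(1\otimes z)$ can be joined to $1$ by a path of the form $t\mapsto\Psi(1\otimes z_t)$ where $z_t$ ``winds down'' $z$ to the constant function $1$. No such path of unitaries exists: $z_t$ would be a homotopy of continuous maps $\T\to\T$ from the identity to a constant, which is ruled out by the degree (winding number); equivalently $[z]\neq0$ in $K_1(C(\T))$. Knowing only that the $K_1$-class of $\Psi(1\otimes z)$ vanishes in $K_1(A)$ does give some path to $1$ (as $A$ has stable rank one), but with no control on its Lipschitz constant or on its commutators with $\phi(F)$ --- and obtaining exactly that control is the content of the theorem being proved, so it cannot be invoked.

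The missing ingredient is the paper's cylinder trick: the comparison monomorphism is not built directly on $C(X\times\T)$ but as $\sigma'=\sigma\circ(\id\otimes q)$, where $\sigma:C(X\times[-1,1])\to A$ is obtained from Theorem \ref{NWexist} applied to $KL(\phi)\circ KL(\id\otimes p)$ ($p$ the evaluation at $1\in[-1,1]$) and Lebesgue measure on $[-1,1]$, and $q:C(\T)\to C([-1,1])$ is $q(f)(t)=f(\exp(\pi\sqrt{-1}t))$. Since $[-1,1]$ is contractible, the unitary $q(z)$ has an exact continuous logarithm, so $s\mapsto\sigma(1\otimes\exp(\pi\sqrt{-1}\,st))$ is a path of length exactly $\pi$ from $1$ to $\sigma'(1\otimes z)$ that commutes \emph{exactly} with $\sigma'(f\otimes1)$ for every $f$; moreover $\sigma'$ is still injective (because $q$ is), its $KL$-class is still $KL(\phi)\oplus0$, and $T(\sigma')(\tau)=(\tau\circ\phi)\otimes\tau_0$. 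With $\sigma'$ in place of your $\Psi$, the rest of your outline goes through essentially as written; the paper then takes $w(t)=w_0(t)\,v\,w_1(t)^*v^*$ (a pointwise product rather than a concatenation), which gives $\Lip(w)\le2\pi$ directly before a small perturbation of the endpoints.
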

\begin{proof}
Let $p:C([-1,1])\to\C$ be the point evaluation at $1\in[-1,1]$ and 
let $q:C(\T)\to C([-1,1])$ be the unital monomorphism 
defined by $q(f)(t)=f(\exp(\pi\sqrt{-1}t))$. 

By Lemma \ref{KL+1}, 
$KL(\phi)\circ KL(\id\otimes p)$ belongs to $KL(C(X\times[-1,1]),A)_{+,1}$. 
Define $\tau_0\in T(C([-1,1]))$ by 
\[
\tau_0(f)=\frac{1}{2}\int_{-1}^1f(t)\,d\mu(t), 
\]
where $\mu$ is the Lebesgue measure on $\R$. 
Let $\lambda:T(C(X))\to T(C(X\times[-1,1]))$ be the affine continuous map 
defined by $\lambda(\tau)=\tau\otimes\tau_0$. 
By applying Theorem \ref{NWexist} to 
$KL(\phi)\circ KL(\id\otimes p)$ and $\lambda\circ T(\phi)$, 
we get a unital monomorphism $\sigma:C(X\times[-1,1])\to A$ such that 
$KL(\sigma)=KL(\phi)\circ KL(\id\otimes p)$ and 
$T(\sigma)=\lambda\circ T(\phi)$. 
Then $\sigma'=\sigma\circ(\id\otimes q)$ is 
a unital monomorphism from $C(X\times\T)$ to $A$ such that 
\[
KL(\sigma')=KL(\sigma\circ(\id\otimes q))
=KL(\phi)\circ KL(\id\otimes(p\circ q))
\]
and $T(\sigma')=T(\id\otimes q)\circ\lambda\circ T(\phi)$. 
Under the canonical isomorphism 
\[
\underline{K}(C(X\times\T))
\cong\underline{K}(C(X))\oplus\underline{K}(C_0(X\times(\T\setminus\{-1\}))), 
\]
$KL(\sigma')\in\Hom_\Lambda(\underline{K}(C(X\times\T)),\underline{K}(A))$ 
corresponds to $KL(\phi)\oplus0$. 
It is also easy to see that 
$T(\sigma')(\tau)=(\tau\circ\phi)\otimes\tau_0'$ for any $\tau\in T(A)$, 
where $\tau_0'\in T(C(\T))$ is the tracial state 
corresponding to the Haar measure on $\T$. 
From the construction, 
there exists a path of unitaries $w_1:[0,1]\to A$ such that 
\[
w_1(0)=1,\quad w_1(1)=\sigma'(1\otimes z),\quad \Lip(w_1)=\pi
\]
and $[\sigma'(f\otimes1),w_1(t)]=0$ for any $f\in C(X)$ and $t\in[0,1]$. 

By applying Theorem \ref{appunique3} to 
$\sigma':C(X\times\T)\to A$, $\{f\otimes1\mid f\in F\}\cup\{1\otimes z\}$ and 
$\ep/4$, we obtain a finite subset $L\subset\mathcal{K}(C(X\times\T))$, 
a finite subset $G_1\subset C(X\times\T)$ and $\delta_1>0$. 
Choose a sufficiently large finite subset $L_0\subset\mathcal{K}(C(X))$, 
a sufficiently large finite subset $G_2\subset C(X)$ and 
a sufficiently small real number $\delta_2>0$. 
By applying Lemma \ref{fullspec} to 
$G_2\subset C(X)$, $G_1\subset C(X\times\T)$ and $\delta_2>0$, 
we obtain a finite subset $G\subset C(X)$ and $\delta>0$. 

Suppose that we are given a unitary $u\in A$ satisfying 
\[
\lVert[\phi(f),u]\rVert<\delta\quad \forall f\in G\quad\text{and}\quad 
\Bott(\phi,u)(x)=0\quad \forall x\in L_0. 
\]
Let $k\in\N$ be a sufficiently large natural number. 
By Lemma \ref{fullspec}, 
one can find a path of unitaries $w_0:[0,1]\to A$ 
and a $(G_1,\delta_2)$-multiplicative ucp map $\psi:C(X\times\T)\to A$ 
such that 
\[
\lVert w_0(0)-u\rVert<\delta_2,\quad 
\lVert w_0(1)-\psi(1\otimes z)\rVert<\delta_2,\quad \Lip(w_0)\leq\pi, 
\]
\[
\lVert[\phi(f),w_0(t)]\rVert<\delta_2,\quad 
\lVert\psi(f\otimes1)-\phi(f)\rVert<\delta_2
\]
hold for any $f\in G_2$ and $t\in[0,1]$, and 
\[
\lvert\tau(\psi(f\otimes z^j))\rvert<\delta_2\lVert f\rVert
\]
holds for any $\tau\in T(A)$, $f\in C(X)$ and $j\in\Z$ 
with $1\leq\lvert j\rvert<k$. 
Hence, if $L_0\subset\mathcal{K}(C(X))$ is large enough, 
$G_2\subset C(X)$ is large enough and $\delta_2>0$ is small enough, 
then one can conclude $\psi_\#|L=\sigma'_\#|L$. 
In addition, if $k\in\N$ is chosen to be large enough, then we may assume 
\[
\lvert\tau(\psi(f))-\tau(\sigma'(f))\rvert<\delta_1\quad 
\forall \tau\in T(A),\ f\in G_1. 
\]
It follows from Theorem \ref{appunique3} that 
there exists a unitary $v\in A$ such that 
\[
\lVert v\sigma'(1\otimes z)v^*-\psi(1\otimes z)\Vert<\ep/4,\quad 
\lVert v\sigma'(f\otimes1)v^*-\psi(f\otimes1)\Vert<\ep/4\quad \forall f\in F. 
\]
We define $w:[0,1]\to U(A)$ by $w(t)=w_0(t)vw_1(t)^*v^*$. 
Clearly one has $\Lip(w)\leq2\pi$, 
\[
\lVert w(0)-u\rVert<\delta_2,\quad \lVert w(1)-1\rVert<\delta_2+\ep/4
\]
and 
\[
\lVert[\phi(f),w(t)]\rVert<3\delta_2+\ep/2\quad 
\forall f\in F,\ t\in[0,1]. 
\]
It is easy to perturb the path $w:[0,1]\to A$ a little bit 
so that $w(0)=u$ and $w(1)=1$. 
\end{proof}

The following is an easy generalization of the theorem above. 

\begin{thm}\label{Basic2}
Let $C$ be a unital $C^*$-algebra of the form 
$\bigoplus_{i=1}^np_iM_{k_i}(C(X_i))p_i$, 
where $X_i$ is a path connected compact metrizable space and 
$p_i$ is a non-zero projection of $M_{k_i}(C(X_i))$. 
Let $A\in\mathcal{T}\cup\mathcal{T}'$. 
Let $\phi:C\to A$ be a unital monomorphism. 
For any finite subset $F\subset C$ and $\ep>0$, 
there exist a finite subset $L\subset\mathcal{K}(C)$, 
a finite subset $G\subset C$ and $\delta>0$ 
such that the following hold. 
If $u\in A$ is a unitary satisfying 
\[
\lVert[\phi(f),u]\rVert<\delta\quad \forall f\in G\quad\text{and}\quad 
\Bott(\phi,u)(x)=0\quad \forall x\in L, 
\]
then there exists a path of unitaries $w:[0,1]\to A$ such that 
\[
w(0)=u,\quad w(1)=1,\quad \Lip(w)<2\pi+\ep
\]
and 
\[
\lVert[\phi(f),w(t)]\rVert<\ep\quad \forall f\in F,\ t\in[0,1]. 
\]
\end{thm}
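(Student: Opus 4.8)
The plan is to reduce Theorem \ref{Basic2} to Theorem \ref{Basic} by standard matrix-amplification and corner arguments, exactly in the spirit of the reduction carried out in Corollary \ref{rr0uniqueAH} from Theorem \ref{rr0unique}. First I would treat a single summand $C_i=p_iM_{k_i}(C(X_i))p_i$, since a unitary $u\in A$ commuting approximately with $\phi(F)$ for a finite $F$ spanning all summands commutes approximately with each $\phi$ restricted to each summand, and the paths $w$ can be built independently only after one observes that the identity $1_A=\sum_i\phi(1_{C_i})$ splits $A$ into corners $\phi(1_{C_i})A\phi(1_{C_i})$; however, since $u$ need not respect this decomposition, the cleaner route is to handle the whole direct sum at once after first passing to a full matrix algebra over a genuine $C(X)$.

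\smallskip
Concretely, I would first dispose of the case $C=M_k(C(X))$ with $X$ path connected. Writing $e\in M_k$ for a minimal projection and $\phi'=\phi|_{e C e}:C(X)\to\phi(1\otimes e)A\phi(1\otimes e)$, one has $1_A=\sum_{j=1}^k \phi(e_{jj})$ with matrix units carried over by $\phi$; conjugating $u$ by the partial isometries $\phi(e_{1j})$ produces, from the hypothesis $\lVert[\phi(f),u]\rVert<\delta$ for $f$ in a large $G\supset\{e_{ij}\}\cup\{f\otimes e_{11}:f\in G_0\}$, a unitary $u'$ in the corner $\phi(1\otimes e)A\phi(1\otimes e)$ that commutes approximately with $\phi'(G_0)$ and whose $\Bott$ invariant against $\phi'$ is the image of $\Bott(\phi,u)$ under the Morita identification $\underline K(C(X))\cong\underline K(M_k(C(X)))$. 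Applying Theorem \ref{Basic} to $\phi'$ inside this corner (which again lies in $\mathcal T\cup\mathcal T'$, being a full corner of such an algebra) gives a path $w'(t)$ there with the required Lipschitz bound; amplifying by $\sum_j \phi(e_{j1})\,w'(t)\,\phi(e_{1j})$ and perturbing slightly to fix endpoints yields the path for $M_k(C(X))$. The cut-down case $C=p(M_k(C(X)))p$ is then handled by the same trick used in Corollary \ref{NWexist2} and Corollary \ref{rr0uniqueAH}: embed $p$ into a trivial projection $1_{C(X)}\otimes r$ after stabilizing by some $M_l$, apply the matrix case, and restrict.

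\smallskip
For the general direct sum $C=\bigoplus_{i=1}^n C_i$, I would choose projections $\tilde p_i=\phi(1_{C_i})\in A$ with $\sum_i\tilde p_i=1$, but rather than cutting $u$ by these (which $u$ need not commute with), it is simpler to note that the above argument for a single block produces, for each $i$, a large finite set $G^{(i)}\subset C_i$, a small $\delta^{(i)}$, and a finite $L^{(i)}\subset\mathcal K(C_i)$; taking $G=\bigcup_i G^{(i)}$ (viewed inside $C$ via the canonical inclusions), $\delta=\min_i\delta^{(i)}$, and $L=\bigcup_i L^{(i)}$ suffices. Indeed, given $u$ with $\lVert[\phi(f),u]\rVert<\delta$ for all $f\in G$ and $\Bott(\phi,u)|_L=0$, one gets for each $i$ a path $w_i:[0,1]\to \tilde p_iA\tilde p_i$ from $\tilde p_iu\tilde p_i$-approximant to $\tilde p_i$ — but this requires first replacing $u$ by a unitary of block-diagonal form, which is legitimate up to $\ep$ because $u$ nearly commutes with each central projection $\phi(1_{C_i})$ of the image; then $w(t)=\bigoplus_i w_i(t)$ works, and $\Lip(w)=\max_i\Lip(w_i)<2\pi+\ep$. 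The endpoint conditions and the commutator estimate $\lVert[\phi(f),w(t)]\rVert<\ep$ follow since $f\in F\subset C$ decomposes along the summands.

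\smallskip
The main obstacle, and the only genuinely non-routine point, is the passage from an arbitrary $u$ to a nearly-block-diagonal one compatible with the decomposition $1_A=\sum_i\phi(1_{C_i})$ and, within a block, with the matrix units $\phi(e_{ij})$: one must verify that conjugating $u$ by these (approximate) matrix units does not destroy the smallness of $\Bott(\phi,u)$, i.e.\ that the relevant Morita/corner identifications intertwine the $\Bott$ maps. This is where one has to be careful to include enough of the matrix units in $G$ and enough classes in $L$ so that the perturbed unitary $u'$ genuinely satisfies $\Bott(\phi',u')|_{L^{(i)}}=0$; granting that, everything else is the bookkeeping already rehearsed in Corollaries \ref{NWexist2} and \ref{rr0uniqueAH}.
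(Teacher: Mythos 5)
Your reduction to Theorem \ref{Basic} via matrix units, the dilation/corner trick of Corollaries \ref{NWexist2} and \ref{rr0uniqueAH}, and block-diagonalizing $u$ over the direct summands (using that corners of algebras in $\mathcal{T}\cup\mathcal{T}'$ stay in the class) is exactly the route the paper intends, since its own proof is just a pointer to Lin's Lemma 17.5 together with Theorem \ref{Basic} and Corollary \ref{rr0uniqueAH}. The points you flag as remaining --- compatibility of $\Bott$ with the corner/Morita identifications, and the fact that ``restricting'' really means cutting the resulting path down by $\phi(p\otimes e)$ and re-unitarizing while keeping the Lipschitz and endpoint control --- are precisely the routine details the paper omits, and they do go through.
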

\begin{proof}
We can prove this in a similar fashion to \cite[Lemma 17.5]{L0612} 
by using the theorem above. 
We omit the detail. 
It is worth noting that if $A$ is in $\mathcal{T}\cup\mathcal{T}'$ and 
$e\in A$ is a non-zero projection, 
then $eAe$ is also in $\mathcal{T}\cup\mathcal{T}'$. 
See also the proof of Corollary \ref{rr0uniqueAH}. 
\end{proof}

\begin{rem}\label{Qstable}
In the theorems above, 
if the target algebra $A$ satisfies $A\cong A\otimes Q$ 
(i.e. $A$ is $Q$-stable), then 
$K_i(A;\Z_n)=0$ for any $i=0,1$ and $n\geq1$ 
because $K_i(A)$ is torsion free and divisible. 
Therefore the entire $K$-group $\underline{K}(A)$ is 
canonically isomorphic to $K_0(A)\oplus K_1(A)$. 
Consequently we may assume that 
the finite subset $L\subset\mathcal{K}(C(X))$ in the statement is 
actually a finite subset of $P(C(X)\otimes\K)\cup U_\infty(C(X))$. 
\end{rem}

\section{$\mathcal{Z}$-stable $C^*$-algebras}

In this section we prove Theorem \ref{main} and Corollary \ref{mainAH}.  
When $X$ is a finite CW complex, 
it is well-known that $K_*(C(X))$ is finitely generated. 

\begin{lem}\label{adjust1}
Let $C$ be a $C^*$-algebra of the form $p(C(X)\otimes M_k)p$, 
where $X$ is a finite CW complex and $p\in C(X)\otimes M_k$ is a projection. 
Let $A\in\mathcal{T}\cup\mathcal{T}'$. 
Let $L\subset U_\infty(C)$ be a finite subset which generates $K_1(C)$ 
and let $\phi:C\to A$ be a unital monomorphism. 
For any finite subset $F\subset C$ and $\ep>0$, 
there exists $\delta>0$ such that the following holds. 
If $\xi:K_1(C)\to K_0(A)$ satisfies 
$\lVert D_A(\xi([w]))\rVert<\delta$ for any $w\in L$, 
then there exists a unitary $u\in A$ such that 
\[
\lVert[\phi(f),u]\rVert<\ep
\]
for every $f\in F$ and 
\[
\Bott(\phi,u)(w)=\xi([w])
\]
for every $w\in L$. 
\end{lem}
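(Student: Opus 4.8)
The plan is to reduce, using the cell structure of $X$, to realizing prescribed ``small'' Bott elements on a fixed finite generating set of $K_1(C)$, and then to build the required unitary by a direct cyclic (rotation-type) construction inside $A$.

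First I would make harmless reductions. Splitting $X$ into its connected components, assume $X$ is connected, so that $K_*(C)$ is finitely generated; as in the proof of Corollary \ref{rr0uniqueAH} one may further assume $C$ is (a matrix algebra over) $C(X)$. After enlarging $L$, assume $L=\{w_1,\dots,w_s\}$ with $w_j\in U(C\otimes M_{N_j})$ and $[w_1],\dots,[w_s]$ generating $K_1(C)$. Since $\xi$ is a group homomorphism and, for any unitary $u$ almost commuting with $\phi(C)$, the assignment $w\mapsto\Bott(\phi,u)(w)$ is additive and kills the relations of $K_1(C)$, it suffices to produce one unitary $u$ with $\lVert[\phi(f),u]\rVert<\ep$ on $F$ and $\Bott(\phi,u)(w_j)=\xi([w_j])$ for all $j$; compatibility with the relations among the $[w_j]$ is then automatic.

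For the construction I would exploit that every $A\in\mathcal T\cup\mathcal T'$ has real rank zero, is simple, and has weakly unperforated $K_0$ and strict comparison of projections. Using the mechanism of Lemma \ref{TAF} (resp.\ Lemma \ref{rr0}), approximate $\phi$ on a large finite set by a finite dimensional homomorphism: fix a sufficiently fine finite partition of $X$ with chosen points $x$, and find mutually orthogonal projections $p_x\in A$ with $\sum_x p_x=1$, each $\tau(p_x)$ as small as desired, and $\phi(g)\approx\sum_x g(x)p_x$ for $g$ in $F$ and in the matrix entries of the $w_j$ (modulo a part supported on a corner of arbitrarily small trace). A unitary $u$ almost commuting with the algebra $\{\sum_x g(x)p_x\}$ is, up to small perturbation, ``near-diagonal'' with respect to $\{p_x\}$, its off-diagonal corners $p_xup_y$ supported only between $x,y$ close in $X$; such a $u$ amounts to a unitary ``connection'' over the $1$-skeleton of the nerve of the partition, and $\Bott(\phi,u)(w_j)$ becomes the ($K$-theoretic) pairing of $[w_j]$ with the first Chern class of that connection. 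To prescribe this pairing, for each $j$ I would route a cyclic chain $p_{x_0}\to p_{x_1}\to\dots\to p_{x_n}=p_{x_0}$ of mutually equivalent subprojections along a loop in $X$ dual to $[w_j]$, let $u$ act there as the cyclic shift composed with a single ``defect'' partial isometry whose source and range projections are chosen, using strict comparison, so that their difference realizes the prescribed class $\xi([w_j])\in K_0(A)$; carrying this out for the various $j$ along disjoint pieces of a common refinement and multiplying the resulting unitaries yields the desired $u$. Such a $u$ satisfies an approximate rotation relation with $\phi$ along each loop, so every commutator $\lVert[\phi(f),u]\rVert$ is of order $1/n$ with $n$ the length of the chains, hence $<\ep$ once the partition is fine enough, while $\Bott(\phi,u)(w_j)=\xi([w_j])$ by construction. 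Finally Theorem \ref{appunique3} absorbs the finite dimensional approximation error and transports everything back to the given $\phi$.

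The step I expect to be the main obstacle is this last construction: arranging the off-diagonal connection to realize the prescribed Bott pairing on all generators simultaneously while keeping every commutator below $\ep$. This is exactly where the hypothesis $\lVert D_A(\xi([w]))\rVert<\delta$ is used: through weak unperforation and strict comparison it forces each $\xi([w_j])$ to be a difference of two projections whose traces differ by at most a prescribed small amount, which is what lets the ``defect'' partial isometry be inserted without pushing the rotation parameter---and hence the commutator norms---above $\ep$. Coherence across the finitely many generators (legitimate because $X$ is a finite CW complex, so $K_1(C)$ is finitely generated) is guaranteed by the additivity of $\Bott(\phi,-)$ together with the fact that $\xi$ respects the relations, so the relations of $K_1(C)$ present no genuine difficulty.
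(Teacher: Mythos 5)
There is a genuine gap, and it sits exactly where you placed your ``main obstacle'': the realization of the prescribed Bott elements. Your construction produces a unitary $u$ that is near-diagonal with respect to the point-evaluation projections $p_x$ and acts as cyclic shifts with a defect ``along a loop in $X$ dual to $[w_j]$''. But a generator of $K_1(C)$ for a finite CW complex need not have any dual loop: the only part of $K^1(X)$ that a connection over the $1$-skeleton (equivalently, the restriction of $w_j$ to loops) can see is the $H^1$-component of its Chern character. Concretely, take $X=S^3$ and $C=C(S^3)$, so $K_1(C)\cong\Z$ with generator $w\in U_2(C(S^3))$, while every loop in $S^3$ is contractible; for a shift-with-defect unitary supported on a chain $p_{x_0},\dots,p_{x_{n-1}}$ along a loop $\gamma$, the Bott element $\Bott(\phi,u)(w)$ is computed in the corner $PAP$, $P=\sum_i p_{x_i}$, against the compression $\sum_i w(x_i)p_{x_i}$, i.e.\ against the discretized restriction $w|_\gamma$, whose class in $K^1(S^1)$ is zero. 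Hence your $u$ always gives $\Bott(\phi,u)(w)=0$, yet the lemma requires realizing, e.g., any nonzero $\xi$ with values in $\Ker D_A$ (such $\xi$ satisfy $\lVert D_A(\xi([w]))\rVert=0<\delta$ automatically). So the method cannot prove the statement for general finite CW complexes; it is essentially limited to the case where $K_1(C)$ is detected by $H^1(X)$ (e.g.\ $X=\T$). The smallness hypothesis $\lVert D_A(\xi([w]))\rVert<\delta$ and strict comparison are not the real issue; the issue is that higher-dimensional $K_1$-classes require a mechanism other than $1$-skeleton connections.

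For comparison, the paper does not construct $u$ by hand at all: for $A\in\mathcal{T}$ it quotes \cite[Lemma 6.11]{L09AJM} (whose proof rests on existence theorems for homomorphisms with prescribed $KK$-data, in the spirit of Theorem \ref{NWexist} applied to $C(X\times\T)$, which is also how the unitaries $v_j$ are produced in Proposition \ref{AtimesZ}), and for $A\in\mathcal{T}'$ it reduces to the $\mathcal{T}$ case by Theorem \ref{LinK} (an AH model $B$ with the same ordered $K$-theory), Corollary \ref{NWexist2} (a homomorphism $\phi_0:C\to B$ with $KL(\psi\circ\phi_0)=KL(\phi)$ and matching traces), and Corollary \ref{rr0uniqueAH} (so $\psi\circ\phi_0$ is approximately unitarily equivalent to $\phi$). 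If you want a self-contained argument, you would need to replace your loop construction by such an existence-theorem argument: realize a $KL$-class on $C\otimes C(\T)$ (or the mapping torus) whose boundary data is $\xi$, and let $u$ be the image of $1\otimes z$; the hypothesis $\lVert D_A(\xi([w]))\rVert<\delta$ then enters through positivity of the resulting $K_0$-map, not through a rotation-parameter estimate.
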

\begin{proof}
When $A$ is in $\mathcal{T}$, 
this lemma is contained in \cite[Lemma 6.11]{L09AJM}. 
Assume $A\in\mathcal{T}'$. 
By Theorem \ref{LinK}, 
there exist a unital simple AH algebra $B$ with real rank zero 
and slow dimension growth and 
a unital homomorphism $\psi:B\to A$ such that 
$K_*(\psi)$ gives a graded ordered isomorphism. 
The tracial simplexes $T(A)$ and $T(B)$ are naturally 
isomorphic to the state spaces of $K_0(A)$ and $K_0(B)$, respectively. 
Hence $T(\psi)$ induces an affine isomorphism from $T(A)$ to $T(B)$. 
It follows from Corollary \ref{NWexist2} that 
there exists a unital homomorphism $\phi_0:C\to B$ such that 
$KL(\phi_0)=KL(\psi)^{-1}\circ KL(\phi)$ and 
$T(\phi_0)=T(\phi)\circ T(\psi)^{-1}$. 
By Corollary \ref{rr0uniqueAH}, $\psi\circ\phi_0$ and $\phi$ are 
approximately unitarily equivalent. 
As $B$ is in $\mathcal{T}$, 
we have already known that the lemma holds for $\phi_0:C\to B$. 
Therefore the lemma holds for $\psi\circ\phi_0:C\to A$, 
and hence for $\phi:C\to A$. 
\end{proof}

\begin{rem}
In the lemma above the finite subset $L\subset U_\infty(C)$ is allowed to be 
\textit{any} finite subset which generates $K_1(C)$, 
though this point is not clearly mentioned in \cite[Lemma 6.11]{L09AJM}. 
This readily follows from the fact that 
(if $F$ is large enough and $\ep$ is small enough, then) 
$\Bott(\phi,u)$ gives rise to a `partial homomorphism' 
from $K_1(C)$ to $K_0(A)$, as mentioned in Section 2.3. 
\end{rem}

In what follows, 
we frequently omit `$\otimes\id$', `$\otimes1$' and `$\otimes\Tr$' 
to simplify notation. 
For example, $u\otimes1\in A\otimes M_n$ is denoted by $u$. 

\begin{lem}\label{adjust2}
Let $C$ be a $C^*$-algebra of the form $p(C(X)\otimes M_k)p$, 
where $X$ is a finite CW complex and $p\in C(X)\otimes M_k$ is a projection. 
Let $A\in\mathcal{T}\cup\mathcal{T}'$. 
Suppose that 
unital monomorphisms $\phi,\psi:C\to A$ satisfy 
$KL(\phi)=KL(\psi)$, $T(\phi)=T(\psi)$. 
Let $L\subset U_\infty(C)$ be a finite subset 
which generates $K_1(C)$. 
For any finite subset $F\subset C$ and $\ep>0$, 
there exists $\delta>0$ such that the following holds. 
If $\eta:K_1(C)\to\Aff(T(A))$ is a homomorphism satisfying 
\[
\eta(x)+\Ima D_A=\Theta_{\phi,\psi}(x)\quad \forall x\in K_1(C)
\]
and 
\[
\lVert\eta([w])\rVert<\delta\quad \forall w\in L, 
\]
then there exists a unitary $u\in A$ such that 
\[
\lVert\phi(f)-u\psi(f)u^*\rVert<\ep\quad \forall f\in F
\]
and 
\[
\frac{1}{2\pi\sqrt{-1}}\tau(\log(\phi(w)^*u\psi(w)u^*))
=\eta([w])(\tau)\quad \forall\tau\in T(A),\ w\in L. 
\]
\end{lem}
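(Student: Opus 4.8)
The plan is to reduce to the situation where $\psi$ is already uniformly close to $\phi$, to record the resulting de la Harpe--Skandalis determinant as an \emph{honest} homomorphism $K_1(C)\to\Aff(T(A))$, and then to use Lemma~\ref{adjust1} to shift it to the prescribed homomorphism $\eta$.

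First I would apply Corollary~\ref{rr0uniqueAH}: since $KL(\phi)=KL(\psi)$ and $T(\phi)=T(\psi)$, the monomorphisms $\phi$ and $\psi$ are approximately unitarily equivalent. Fix in advance a large finite $F_0\subset C$ containing $F$ together with the matrix entries of all $w,w^*$ ($w\in L$), and a small $\delta_0>0$; pick a unitary $u_0$ with $\lVert u_0\psi(c)u_0^*-\phi(c)\rVert<\delta_0$ for $c\in F_0$. Replacing $\psi$ by $\Ad u_0\circ\psi$ (which leaves $KL(\psi)$, $T(\psi)$ and, by Lemma~\ref{Theta}~(2), $\Theta_{\phi,\psi}$ unchanged) and composing the unitary we eventually produce with $u_0$, we may assume $\lVert\phi(c)-\psi(c)\rVert<\delta_0$ for all $c\in F_0$. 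For $w\in U_\infty(C)$ with entries in $F_0$, the unitary $a_w=\phi(w)^*\psi(w)$ is close to $1$, and I set $\rho_0(w)(\tau)=\frac{1}{2\pi\sqrt{-1}}(\tau\otimes\Tr)(\log a_w)$. As $\log(xy)-\log x-\log y$ is a norm-convergent sum of iterated commutators (Baker--Campbell--Hausdorff) and hence killed by every trace, $\rho_0$ is additive in $w$; and $\rho_0(w)=0$ whenever $w\in U_\infty(C)_0$. For the last claim I would note that $\rho_0(w)$ does not depend on the conjugating unitary: two admissible choices differ by a unitary $v$ almost commuting with $\phi$, and the change is $\tfrac{1}{2\pi\sqrt{-1}}(\tau\otimes\Tr)(\log(\phi(w)^*v\phi(w)v^*))$, which, up to sign, equals $(\tau\otimes\Tr)(\Bott(\phi,v)(w))$ by the exact relation between the de la Harpe--Skandalis determinant and the Bott element (Exel's trace formula) and vanishes since $\Bott(\phi,v)$ kills $U_\infty(C)_0$; letting $u_0$ run over the unitaries realizing the approximate unitary equivalence, $\phi(w)^*u_0\psi(w)u_0^*\to1$, so $\rho_0(w)=0$. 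Therefore $\rho_0$ descends to a homomorphism $\bar\rho_0\colon K_1(C)\to\Aff(T(A))$ lifting $\Theta_{\phi,\psi}$ and killing $\Tor(K_1(C))$.

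Next I would build the adjusting map. The homomorphism $\eta-\bar\rho_0\colon K_1(C)\to\Aff(T(A))$ takes values in $\Ima D_A$ (both $\eta$ and $\bar\rho_0$ lift $\Theta_{\phi,\psi}$) and kills torsion; since $X$ is a finite CW complex, $K_1(C)$ is finitely generated, so $K_1(C)/\Tor(K_1(C))$ is free, and choosing $D_A$-preimages of the images of free generators (and sending torsion to $0$) gives a homomorphism $\xi\colon K_1(C)\to K_0(A)$ with $D_A\circ\xi=\eta-\bar\rho_0$; in particular $D_A(\xi([w]))=\eta([w])-\rho_0(w)$ for $w\in L$, and $\lVert D_A(\xi([w]))\rVert\le\lVert\eta([w])\rVert+\lVert\rho_0(w)\rVert$ is as small as we please once $\delta$ and $\delta_0$ are small. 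Applying Lemma~\ref{adjust1} to $\phi$, the generating set $L$, a large $F_1\supset F_0$ and small $\ep_1>0$ chosen in advance, we get a unitary $u=u_1$ with $\lVert[\phi(f),u_1]\rVert<\ep_1$ for $f\in F_1$ and $\Bott(\phi,u_1)(w)=\xi([w])$ for $w\in L$. Then $u\psi(f)u^*=u_1\psi(f)u_1^*\approx\phi(f)$ on $F$, giving $\lVert\phi(f)-u\psi(f)u^*\rVert<\ep$; and for $w\in L$, writing $\phi(w)^*u\psi(w)u^*=(\phi(w)^*u_1\phi(w)u_1^*)(u_1a_wu_1^*)$ with both factors close to $1$, and using Baker--Campbell--Hausdorff, conjugation invariance of traces, and Exel's trace formula on the first factor, one obtains $\frac{1}{2\pi\sqrt{-1}}(\tau\otimes\Tr)(\log(\phi(w)^*u\psi(w)u^*))=\rho_0(w)(\tau)+D_A(\xi([w]))(\tau)=\eta([w])(\tau)$, as required.

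The step I expect to be the main obstacle is the vanishing $\rho_0(w)=0$ for $w\in U_\infty(C)_0$, which is precisely what promotes $\rho_0$ from a partial homomorphism to a genuine homomorphism of $K_1(C)$; it forces one to combine the equality $T(\phi)=T(\psi)$, the honest approximate unitary equivalence supplied by Corollary~\ref{rr0uniqueAH}, and the fact that Exel's trace formula is an exact identity. A related, more clerical, point is that all identities involving $\log$ must be kept exact rather than approximate, which works only because $\log(xy)-\log x-\log y$ lies in the closed span of commutators and so is annihilated by traces.
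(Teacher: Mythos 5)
Your proposal is correct and follows essentially the same route as the paper: conjugate $\psi$ close to $\phi$ via Corollary~\ref{rr0uniqueAH}, promote $\tau\mapsto\frac{1}{2\pi\sqrt{-1}}\tau(\log(\phi(w)^*\psi'(w)))$ to a homomorphism $K_1(C)\to\Aff(T(A))$ lifting $\Theta_{\phi,\psi}$, lift its difference with $\eta$ through $D_A$ (sending torsion to $0$), realize the resulting $\xi$ by Lemma~\ref{adjust1}, and verify the determinant identity by additivity of the trace of the logarithm together with Exel's trace formula \cite[Theorem 3.6]{L09AJM}. The only local differences are cosmetic: the paper applies Lemma~\ref{adjust1} to $\psi$ rather than $\phi$, and it gets well-definedness of the determinant homomorphism (your $\bar\rho_0$, its $\zeta$) directly from the additivity and homotopy invariance of the de la Harpe--Skandalis determinant \cite[Lemma 1]{HS}, whereas you reach the same vanishing on $U_\infty(C)_0$ via the independence-of-intertwiner argument with $\Bott(\phi,v)$ --- both hinge on the same ``choose $F_0$ large and $\delta_0$ small, adapted to the finitely many relations and homotopies'' bookkeeping.
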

\begin{proof}
Applying Lemma \ref{adjust1} to $\psi$, $F$ and $\ep/2$, 
we obtain $\delta>0$. 
Suppose that $\eta\in\Hom(K_1(C),\Aff(T(A)))$ satisfies 
\[
\eta(x)+\Ima D_A=\Theta_{\phi,\psi}(x)\quad \forall x\in K_1(C)
\]
and 
\[
\lVert\eta([w])\rVert<\delta/2\quad \forall w\in L. 
\]
Choose a large finite subset $F_0\subset C$ and 
a small real number $\ep_0>0$. 
By virtue of Corollary \ref{rr0uniqueAH}, 
there exists a unitary $u_1\in A$ such that 
\[
\lVert\phi(f)-u_1\psi(f)u_1^*\rVert<\min\{\ep_0,\ep/2\}\quad 
\forall f\in F_0\cup F. 
\]
Put $\psi'=\Ad u_1\circ\psi$. 
For each $w\in U_\infty(C)$ satisfying $\lVert\phi(w)-\psi'(w)\rVert<2$, 
the function 
\[
z_w:\tau\mapsto\frac{1}{2\pi\sqrt{-1}}\tau(\log(\phi(w)^*\psi'(w)))
\]
gives an element of $\Aff(T(A))$. 
By \cite[Lemma 1]{HS}, we can see the following 
(see also the proof of Lemma \ref{Theta}). 
\begin{itemize}
\item If $w_1,w_2\in U_\infty(C)$ satisfy 
$\lVert\phi(w_1)-\psi'(w_1)\rVert+\lVert\phi(w_2)-\psi'(w_2)\rVert<2$, then 
$z_{w_1w_2}=z_{w_1}+z_{w_2}$. 
\item If $w:[0,1]\to U(C\otimes M_n)$ is a path of unitaries 
satisfying $\lVert\phi(w(t))-\psi'(w(t))\rVert<2$, then $z_{w(0)}=z_{w(1)}$. 
\end{itemize}
Therefore, if $F_0$ is large enough and $\ep_0$ is small enough, then 
there exists a homomorphism $\zeta:K_1(C)\to\Aff(T(A))$ such that 
\[
\zeta([w])(\tau)=z_w(\tau)
=\frac{1}{2\pi\sqrt{-1}}\tau(\log(\phi(w)^*\psi'(w)))\quad 
\forall\tau\in T(A),\ w\in L. 
\]
Clearly we may further assume that 
$\phi(w)$ and $u_1\psi(w)u_1^*$ are close enough 
to imply $\lVert\zeta([w])\rVert<\delta/2$ for every $w\in L$. 
We also have $\eta([w])-\zeta([w])\in\Ima D_A$ by Lemma \ref{Theta} (2). 
Hence there exists $\xi\in\Hom(K_1(C),K_0(A))$ such that 
$D_A(\xi(x))=\eta(x)-\zeta(x)$ for any $x\in K_1(C)$ 
and $\xi(x)=0$ for any $x\in\Tor(K_1(C))$. 
Moreover one has $\lVert D_A(\xi([w]))\rVert<\delta/2+\delta/2=\delta$. 
It follows from Lemma \ref{adjust1} that 
there exists a unitary $u_2\in A$ such that 
\[
\lVert[\psi(f),u_2]\rVert<\ep/2\quad \forall f\in F
\]
and 
\[
\Bott(\psi,u_2)(w)=\xi([w])\quad \forall w\in L. 
\]
Set $u=u_1u_2$. 
It is straightforward to check that 
\[
\lVert\phi(f)-u\psi(f)u^*\rVert<\ep
\]
holds for any $f\in F$. 
Besides, for any $\tau\in T(A)$ and $w\in L$, 
\begin{align*}
\tau(\log(\phi(w)^*u\psi(w)u^*))
&=\tau(\log(\phi(w)^*u_1u_2\psi(w)u_2^*u_1^*)) \\
&=\tau(\log(\phi(w)^*u_1\psi(w)u_1^*u_1\psi(w)^*u_2\psi(w)u_2^*u_1^*)) \\
&=\tau(\log(\phi(w)^*u_1\psi(w)u_1^*))
+\tau(\log(\psi(w)^*u_2\psi(w)u_2^*)) \\
&=2\pi\sqrt{-1}(\zeta([w])(\tau)+D_A(\Bott(\psi,u_2)(w))(\tau)) \\
&=2\pi\sqrt{-1}(\zeta([w])(\tau)+D_A(\xi([w]))(\tau)) \\
&=2\pi\sqrt{-1}\eta([w])(\tau), 
\end{align*}
where we have used \cite[Theorem 3.6]{L09AJM}. 
\end{proof}

The following lemma is an easy exercise and we leave it to the reader. 

\begin{lem}\label{decomp}
Let $L$ be a finitely generated abelian group and 
let $M$ be an abelian group. 
Let $N_0$ and $N_1$ be subgroups of $\Q$ and 
let $N\subset\Q$ be the subgroup generated by $N_0, N_1$. 
Then for any $\xi\in\Hom(L,M\otimes N)$, 
there exist $\xi_j\in\Hom(L,M\otimes N_j)$ such that $\xi=\xi_1-\xi_0$. 
\end{lem}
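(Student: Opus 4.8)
The plan is to realise $M\otimes N$ as a quotient of $(M\otimes N_0)\oplus(M\otimes N_1)$ for which the needed sections exist, and then simply lift $\xi$ through that quotient. The starting point is the short exact sequence of abelian groups
\[
0\longrightarrow N_0\cap N_1\stackrel{\iota}{\longrightarrow}N_0\oplus N_1
\stackrel{\pi}{\longrightarrow}N\longrightarrow 0,
\]
with $\iota(a)=(a,a)$ and $\pi(a,b)=b-a$. Here surjectivity of $\pi$ is precisely the hypothesis $N=N_0+N_1$, and $\Ker\pi=\iota(N_0\cap N_1)$ because $\Q$ is torsion free (if $\pi(a,b)=0$ then $a=b$).

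The key observation will be that this sequence is \emph{pure}: for every $n\in\N$, if $(a,a)\in nN_0\oplus nN_1$ with $a\in N_0\cap N_1$, then $a=na_0=na_1$ for some $a_0\in N_0$ and $a_1\in N_1$, and torsion freeness of $\Q$ forces $a_0=a_1\in N_0\cap N_1$, so $(a,a)=n\iota(a_0)$. Since tensoring a pure exact sequence by $M$ again yields a pure exact sequence (a pure exact sequence is a direct limit of split ones, and $-\otimes M$ preserves both direct limits and split exactness), the sequence
\[
0\longrightarrow M\otimes(N_0\cap N_1)\longrightarrow(M\otimes N_0)\oplus(M\otimes N_1)
\stackrel{\pi_M}{\longrightarrow}M\otimes N\longrightarrow 0
\]
is pure exact, where $M\otimes(N_0\oplus N_1)$ is identified with $(M\otimes N_0)\oplus(M\otimes N_1)$. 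Finally, applying $\Hom(L,-)$ carries a pure short exact sequence to a short exact sequence when $L$ is finitely generated: writing $L\cong\Z^r\oplus\bigoplus_j\Z/m_j$ one has $\Ext(L,G)\cong\bigoplus_j G/m_jG$, and purity makes the connecting homomorphism $\Hom(L,M\otimes N)\to\Ext(L,M\otimes(N_0\cap N_1))$ vanish on each summand (lift an $m_j$-torsion element and multiply by $m_j$). Hence $\pi_M$ induces a surjection
\[
\Hom(L,M\otimes N_0)\oplus\Hom(L,M\otimes N_1)\longrightarrow\Hom(L,M\otimes N),
\]
and any preimage $(\xi_0,\xi_1)$ of $\xi$ consists of homomorphisms $\xi_j\colon L\to M\otimes N_j$ with $\xi=\xi_1-\xi_0$.

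The only step that is not purely formal is the treatment of the torsion of $L$: a naive lift of a torsion generator through $\pi_M$ need not again be torsion, and the obstruction to correcting it lies in $\Ext(L,M\otimes(N_0\cap N_1))$. Purity of the displayed sequence — which in the end rests on nothing more than the absence of torsion in $\Q$ — is exactly what kills this obstruction, so I expect this to be the one point deserving care; the free part of $L$ is handled by an immediate lifting argument.
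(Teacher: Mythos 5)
The paper offers no argument to compare against here: Lemma \ref{decomp} is explicitly left to the reader as an easy exercise. Your proof is correct. The short exact sequence $0\to N_0\cap N_1\to N_0\oplus N_1\to N\to 0$ is the right device, your verification of its purity (which is where torsion-freeness of $\Q$ really enters) is accurate, and the two standard facts you invoke — that tensoring preserves pure exactness, and that for finitely generated $L$ the connecting map $\Hom(L,M\otimes N)\to\Ext\bigl(L,M\otimes(N_0\cap N_1)\bigr)$ vanishes on a pure extension (equivalently, finitely presented abelian groups are pure-projective) — do exactly the work needed to handle the torsion of $L$, which is indeed the only delicate point; the free part lifts trivially by right-exactness of the tensor product. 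Two small remarks. First, exactness of the tensored sequence follows already from $\Tor(N,M)=0$ (as $N\subset\Q$ is torsion-free), so the ``direct limit of split sequences'' machinery is only needed to transport \emph{purity} through $-\otimes M$; alternatively one can bypass that general theorem and kill the $\Ext$-obstruction by the same explicit computation you indicate: lift $\xi(g_j)$, note $m_j$ times the lift lies in the kernel, and correct it using the divisibility statement, which can be checked at a finite stage of the limit. Second, the phrase ``because $\Q$ is torsion free'' in identifying $\Ker\pi$ is unnecessary (it is just $b-a=0\Rightarrow a=b$); torsion-freeness is genuinely used only in the purity computation. Neither point affects correctness, and your reading of the conclusion $\xi=\xi_1-\xi_0$ (after composing with the canonical maps $M\otimes N_j\to M\otimes N$) is the one the paper uses in Proposition \ref{AtimesZ} and Theorem \ref{main2}.
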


For each infinite supernatural number $\mathfrak{p}$ 
we let $M_{\mathfrak{p}}$ denote the UHF algebra of type $\mathfrak{p}$. 
Let $\mathfrak{p},\mathfrak{q}$ be 
relatively prime infinite supernatural numbers such that 
$M_{\mathfrak{p}}\otimes M_{\mathfrak{q}}\cong Q$. 
As in \cite{RW}, define a $C^*$-algebra $Z$ by 
\[
Z=\{f\in C([0,1],M_{\mathfrak{p}}\otimes M_{\mathfrak{q}})\mid
f(0)\in M_{\mathfrak{p}}\otimes\C,\ f(1)\in\C\otimes M_{\mathfrak{q}}\}. 
\]
The following proposition is 
the main part of the proof of Theorem \ref{main}. 

\begin{prop}\label{AtimesZ}
Let $X$ be a connected finite CW complex and 
let $A\in\mathcal{C}\cup\mathcal{C}'$. 
Suppose that two unital monomorphisms $\phi,\psi:C(X)\to A$ satisfy 
$KL(\phi)=KL(\psi)$, $T(\phi)=T(\psi)$ 
and $\Ima\Theta_{\phi,\psi}\subset\overline{\Ima D_A}$. 
Then for any finite subset $F\subset C(X)$ and $\ep>0$, 
there exists a unitary $u\in A\otimes Z$ such that 
\[
\lVert\phi(f)\otimes1-u(\psi(f)\otimes1)u^*\rVert<\ep
\]
holds for any $f\in F$. 
\end{prop}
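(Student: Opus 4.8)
The plan is to produce the unitary of $A\otimes Z$ as a path. Identify $A\otimes Z$ with the subalgebra of $C([0,1],A\otimes Q)$ consisting of paths $t\mapsto u(t)$ with $u(0)\in A\otimes M_{\mathfrak{p}}\otimes1$ and $u(1)\in A\otimes1\otimes M_{\mathfrak{q}}$, and look for such a path with $\lVert u(t)(\psi(f)\otimes1)u(t)^*-\phi(f)\otimes1\rVert<\ep$ for all $t\in[0,1]$ and $f\in F$. By Remark \ref{fourclasses} (3),(4), the algebras $A\otimes M_{\mathfrak{p}}$, $A\otimes M_{\mathfrak{q}}$ and $A\otimes Q$ all belong to $\mathcal{T}\cup\mathcal{T}'$; since these are tensor products with UHF algebras, the pairs $f\mapsto\phi(f)\otimes1$, $f\mapsto\psi(f)\otimes1$ into each of them have the same $KL$-class and induce the same traces, so Theorem \ref{rr0unique} yields unitaries $u_{\mathfrak{p}}\in A\otimes M_{\mathfrak{p}}$, $u_{\mathfrak{q}}\in A\otimes M_{\mathfrak{q}}$ implementing these equivalences to within a prescribed tolerance on a prescribed finite set. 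Setting $w=(u_{\mathfrak{p}}\otimes1)^*\tilde u_{\mathfrak{q}}\in A\otimes Q$, where $\tilde u_{\mathfrak{q}}$ is $u_{\mathfrak{q}}$ placed in the first and third tensor legs, a routine estimate shows $w$ almost commutes with $\psi(\cdot)\otimes1$. If in addition $w$ can be arranged so that $\Bott(\psi(\cdot)\otimes1,w)$ vanishes on a finite subset $L\subset\mathcal{K}(C(X))$ of the size dictated by Theorem \ref{Basic}, then Theorem \ref{Basic}, applied to the unital monomorphism $\psi(\cdot)\otimes1:C(X)\to A\otimes Q$, supplies a path $\tilde w:[0,1]\to A\otimes Q$ from $w$ to $1$ with commutators with $\psi(\cdot)\otimes1$ small along $F$, and then $u(t)=(u_{\mathfrak{p}}\otimes1)\tilde w(t)$ is the required path, with $u(0)=u_{\mathfrak{p}}\otimes1$, $u(1)=\tilde u_{\mathfrak{q}}$ and $u(t)(\psi(f)\otimes1)u(t)^*\approx_\ep\phi(f)\otimes1$.

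Thus everything comes down to arranging $\Bott(\psi(\cdot)\otimes1,w)|L=0$. Here I would use Remark \ref{Qstable} to take $L$ inside $U_\infty(C(X))\cup P(C(X)\otimes\K)$ and, since $X$ is a finite CW complex, enlarge it so that its unitary part generates the finitely generated group $K_1(C(X))$. After first conjugating everything by a single unitary of $A\otimes Q$ realizing $\phi(\cdot)\otimes1\sim\psi(\cdot)\otimes1$ there (again Theorem \ref{rr0unique}), $w$ is exhibited as a product $v_{\mathfrak{p}}^*v_{\mathfrak{q}}$ of two unitaries \emph{each} almost commuting with $\psi(\cdot)\otimes1$, with $v_{\mathfrak{p}}$ carrying the $M_{\mathfrak{p}}$-side datum and $v_{\mathfrak{q}}$ the $M_{\mathfrak{q}}$-side one; by the partial-homomorphism property of $\Bott$ this splits the obstruction as a difference of a contribution valued in $\underline K(A)\otimes N_{\mathfrak{p}}$ and one valued in $\underline K(A)\otimes N_{\mathfrak{q}}$ inside $\underline K(A\otimes Q)=\underline K(A)\otimes\Q$, where $N_{\mathfrak{p}}=K_0(M_{\mathfrak{p}})$, $N_{\mathfrak{q}}=K_0(M_{\mathfrak{q}})$ and $N_{\mathfrak{p}}+N_{\mathfrak{q}}=\Q$. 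One then computes the obstruction for an initial choice of $u_{\mathfrak{p}},u_{\mathfrak{q}}$, applies Lemma \ref{decomp} (with $M=\underline K(A)$, $N_0=N_{\mathfrak{p}}$, $N_1=N_{\mathfrak{q}}$) to split its negative into an $\underline K(A)\otimes N_{\mathfrak{p}}$-part and an $\underline K(A)\otimes N_{\mathfrak{q}}$-part, and realizes each part by perturbing $u_{\mathfrak{p}}$, resp. $u_{\mathfrak{q}}$, by an almost central unitary coming from Lemma \ref{adjust1} inside $A\otimes M_{\mathfrak{p}}$, resp. $A\otimes M_{\mathfrak{q}}$ (both in $\mathcal{T}\cup\mathcal{T}'$). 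The $K_0(C(X))\to K_1$ component of the obstruction, not directly controlled by Lemma \ref{adjust1}, should be handled by the same splitting together with the purity of the mapping-torus sequence of Lemma \ref{Theta} (4), which rests on $KL(\phi)=KL(\psi)$.

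The main obstacle is precisely this adjustment step, because Lemma \ref{adjust1} can realize a prescribed change of the Bott invariant only when its dimension-map image is small, whereas the halves produced by Lemma \ref{decomp} need not be: the splitting ambiguity is only a common summand valued in $\underline K(A)\otimes(N_{\mathfrak{p}}\cap N_{\mathfrak{q}})=\underline K(A)$, and $D_A(K_0(A))$ need not be dense in $\Aff(T(A))$ since $A$ need not have real rank zero. This is exactly where the hypothesis $\Ima\Theta_{\phi,\psi}\subset\overline{\Ima D_A}$ enters: the discrepancy between the $K_0$-class one wants and a small-dimension-map class that Lemma \ref{adjust1} can reach is absorbed into the de la Harpe--Skandalis determinant of $\phi(w)^*u_{\mathfrak{p}}(\psi(w)\otimes1)u_{\mathfrak{p}}^*$ by combining Lemma \ref{adjust1} with Lemma \ref{adjust2}, the hypothesis guaranteeing that the required determinant correction lies in $\overline{\Ima D_A}$ and hence can be realized up to arbitrarily small error (so that the modified $w$ still almost commutes with $\psi(\cdot)\otimes1$ and Theorem \ref{Basic} still applies). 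Once these $K$-theoretic and tracial bookkeeping points are settled, the rest is a matter of ordering the quantifiers correctly --- first $L,G,\delta$ from Theorem \ref{Basic}, then the data of Lemmas \ref{adjust1} and \ref{adjust2}, and finally the finite set and tolerance fed to Theorem \ref{rr0unique} --- and assembling the estimates.
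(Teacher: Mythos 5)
Your overall skeleton is the paper's: produce $u_0\in A\otimes B_0$ and $u_1\in A\otimes B_1$ implementing the equivalence after tensoring with the two UHF algebras, observe that $u_1^*u_0$ almost commutes with $\bar\psi=\psi(\cdot)\otimes1$ in $A\otimes Q$, kill the Bott obstruction, apply Theorem \ref{Basic}, and read the resulting path as a unitary of $A\otimes Z$. The gap is in the one step you yourself flag as the main obstacle: you never actually kill $\Bott(\bar\psi,\cdot)|L$. First, the order of operations matters. In the paper one does not compute the obstruction for an arbitrary initial pair and then try to ``absorb the discrepancy into the determinant'': one first chooses a single homomorphism $\eta:K_1(C(X))\to\Aff(T(A))$ lifting $\Theta_{\phi,\psi}$ with $\lVert\eta([w])\rVert$ arbitrarily small on a finite generating set (this is where $\Ima\Theta_{\phi,\psi}\subset\overline{\Ima D_A}$ and Lemma \ref{Theta}(4) enter, $K_1(C(X))$ being finitely generated), and then uses Lemma \ref{adjust2} in \emph{both} $A\otimes B_0$ and $A\otimes B_1$ to produce $u_0,u_1$ whose rotation data $\frac{1}{2\pi\sqrt{-1}}\tau(\log(\bar\phi(w)^*u_j\bar\psi(w)u_j^*))$ are \emph{equal to the same} $\eta([w])(\tau)$. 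Only because of this exact matching does the exponential formula (\cite[Theorem 3.6]{L09AJM}) give $D_{A\otimes Q}(\xi_1([w]))=\eta([w])-\eta([w])=0$, i.e.\ the $K_1\to K_0$ obstruction of $u_1^*u_0$ lies exactly in $\Ker D_{A\otimes Q}=(\Ker D_A)\otimes\Q$, after which Lemma \ref{decomp} (applied with $M=\Ker D_A$) splits it into halves valued in $\Ker D_{A\otimes B_j}$. Your version---split first, then correct---founders on exactly the point you mention: the splitting ambiguity is a common summand in $\Hom(K_1(C(X)),K_0(A))$, whose $D_A$-image is $\Ima D_A$ and not dense, and your proposed repair (``combining Lemma \ref{adjust1} with Lemma \ref{adjust2}'' so that a ``determinant correction'' in $\overline{\Ima D_A}$ is ``realized up to arbitrarily small error'') is a gesture at the right ingredients, not an argument; no cancellation making the halves reachable is ever established.

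Second, you have no mechanism for the $K_0(C(X))\to K_1$ component of the obstruction. Lemma \ref{adjust1} only realizes prescribed maps $K_1(C(X))\to K_0$, and the ``purity of the mapping-torus sequence of Lemma \ref{Theta}(4)'' has nothing to do with producing an almost central unitary with prescribed $\Bott$ data of that type; as written this part of your plan is a non sequitur. The paper realizes the pair $(\xi_{0,j},\xi_{1,j})$ simultaneously by a different device: using the split exact sequence $0\to C_0(X\times(\T\setminus\{-1\}))\to C(X\times\T)\to C(X)\to 0$ it builds a class $\tilde\kappa_j\in KL(C(X\times\T),A\otimes B_j)$ agreeing with $KL(\bar\psi)$ on the $C(X)$ summand and with $\xi_{\cdot,j}$ on the suspension summand; the fact that $\xi_{1,j}$ takes values in $\Ker D_{A\otimes B_j}$ is precisely what makes $\tilde\kappa_j$ unital and strictly positive via Lemma \ref{KL+1} (so the $\Ker D$ condition is needed for positivity, not for smallness in Lemma \ref{adjust1}). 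Theorem \ref{NWexist} then yields a monomorphism $\sigma_j:C(X\times\T)\to A\otimes B_j$ realizing $\tilde\kappa_j$ with the product trace, and Theorem \ref{rr0unique} conjugates $\sigma_j\circ\rho$ onto $\bar\psi$, producing a unitary $v_j$ almost commuting with $\bar\psi$ and with $\Bott(\bar\psi,v_j)(s)=\xi_{i,j}([s])$ for both $i=0,1$. It is the product $v_1^*u_1^*u_0v_0$, not your $w$, whose Bott data vanish on $L$, and to which Theorem \ref{Basic} is applied; the unitary of $Z$ is then $U(t)=u_1v_1w(t)$. Without this existence-theorem step (or an equivalent substitute for the $K_0\to K_1$ part) your argument does not go through.
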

\begin{proof}
We write $Q=M_{\mathfrak{p}}\otimes M_{\mathfrak{q}}$, 
$B_0=M_{\mathfrak{p}}\otimes\C$ and $B_1=\C\otimes M_{\mathfrak{q}}$. 
By Remark \ref{fourclasses}, 
$A\otimes Q$, $A\otimes B_0$ and $A\otimes B_1$ are 
in $\mathcal{T}\cup\mathcal{T}'$. 
Set $\bar\phi(f)=\phi(f)\otimes1$ and $\bar\psi(f)=\psi(f)\otimes1$. 
We regard $\bar\phi$ and $\bar\psi$ 
as homomorphisms from $C(X)$ into $A\otimes Q$ or $A\otimes B_j$. 
We identify $T(A\otimes Q)$, $T(A\otimes B_j)$ with $T(A)$. 
In the same way as Lemma \ref{adjust2}, to simplify notation, 
for $u\in A$ we denote $u\otimes1\in A\otimes M_n$ by $u$. 
Similarly, for $\tau\in T(A)$, 
$\tau\otimes\Tr$ on $A\otimes M_n$ is written by $\tau$ for short. 

Applying Theorem \ref{Basic} to 
$\bar\psi:C(X)\to A\otimes Q$, $F$ and $\ep/2$, 
we obtain a finite subset $L\subset\mathcal{K}(C(X))$, 
a finite subset $G_1\subset C(X)$ and $\delta_1>0$. 
By Remark \ref{Qstable}, we may and do assume that 
$L$ is written as $L=L_0\cup L_1$, 
where $L_0$ is a finite subset of $P(C(X)\otimes\K)$ and 
$L_1$ is a finite subset of $U_\infty(C(X))$. 
We may further assume that $L_1$ generates $K_1(C(X))$. 
Since $K_i(C(X))$ is finitely generated, 
one can find a finite subset $G_2\subset C(X)$ and $\delta_2>0$ 
such that the following holds: 
For any unitary $w\in A\otimes Q$ satisfying 
$\lVert[\bar\psi(f),w]\rVert<\delta_2$ for any $f\in G_2$, 
there exist $\xi_i\in\Hom(K_i(C(X)),K_{1-i}(A\otimes Q))$ such that 
$\xi_i([s])=\Bott(\bar\psi,w)(s)$ for any $s\in L_i$ and $i=0,1$ 
(\cite[Section 2]{L0612}). 
We may assume that $G_2$ contains $F\cup G_1$ and 
that $\delta_2$ is less than $\min\{\ep/2,\delta_1/2\}$. 
By applying Lemma \ref{adjust2} to 
$\bar\phi,\bar\psi:C(X)\to A\otimes B_j$, $G_2\subset C(X)$ and $\delta_2/2$, 
we get $\delta_{3,j}>0$ for each $j=0,1$. 

Since $K_1(C(X))$ is finitely generated and 
the homomorphism $\Theta_{\phi,\psi}$ factors through 
$K_1(C(X))/\Tor(K_1(C(X)))$ by Lemma \ref{Theta} (4), 
there exists $\eta\in\Hom(K_1(C(X)),\Aff(T(A)))$ such that 
\[
\eta(x)+\Ima D_A=\Theta_{\phi,\psi}(x)\quad \forall x\in K_1(C(X)). 
\]
Moreover, we may assume 
$\lVert\eta([w])\rVert<\min\{\delta_{3,0},\delta_{3,1}\}$ for all $w\in L_1$ 
because $\Ima\Theta_{\phi,\psi}$ is contained in the closure of $\Ima D_A$. 
It follows from Lemma \ref{adjust2} that 
there exists a unitary $u_j\in A\otimes B_j$ such that 
\[
\lVert\bar\phi(f)-u_j\bar\psi(f)u_j^*\rVert<\delta_2/2\quad \forall f\in G_2
\]
and 
\[
\frac{1}{2\pi\sqrt{-1}}\tau(\log(\bar\phi(w)^*u_j\bar\psi(w)u_j^*))
=\eta([w])(\tau)\quad \forall\tau\in T(A\otimes B_j),\ w\in L_1. 
\]
In particular 
one has $\lVert[\bar\psi(f),u_1^*u_0]\rVert<\delta_2$ for $f\in G_2$. 
From the choice of $G_2$ and $\delta_2$, 
we can find $\xi_i\in\Hom(K_i(C(X)),K_{1-i}(A\otimes Q))$ such that 
$\xi_i([s])=\Bott(\bar\psi,u_1^*u_0)(s)$ holds for any $s\in L_i$ and $i=0,1$. 
By \cite[Theorem 3.6]{L09AJM}, 
\begin{align*}
D_{A\otimes Q}(\xi_1([w]))(\tau)
&=D_{A\otimes Q}(\Bott(\bar\psi,u_1^*u_0)(w))(\tau) \\
&=\frac{1}{2\pi\sqrt{-1}}
\tau(\log(u_1^*u_0\bar\psi(w)u_0^*u_1\bar\psi(w)^*)) \\
&=\frac{1}{2\pi\sqrt{-1}}\left(
\tau(\log(\bar\phi(w)^*u_0\bar\psi(w)u_0^*))
-\tau(\log(\bar\phi(w)^*u_1\bar\psi(w)u_1^*))\right) \\
&=\eta([w])(\tau)-\eta([w])(\tau)=0
\end{align*}
for any $\tau\in T(A\otimes Q)$. 
Thus $\Ima\xi_1$ is contained in $\Ker D_{A\otimes Q}$. 
By Lemma \ref{decomp}, 
we can find $\xi_{1,j}:K_1(C(X))\to\Ker D_{A\otimes B_j}$ 
such that $\xi_1=\xi_{1,1}-\xi_{1,0}$, 
where $\Ker D_{A\otimes C}$ is naturally identified 
with $(\Ker D_A)\otimes K_0(C)$ for $C=Q,B_0,B_1$. 
In the same way, 
one obtains $\xi_{0,j}:K_0(C(X))\to K_1(A\otimes B_j)$ 
such that $\xi_0=\xi_{0,1}-\xi_{0,0}$. 

We consider the following exact sequence of $C^*$-algebras: 
\[
\begin{CD}0@>>>C_0(X\times(\T\setminus\{-1\}))
@>\iota>>C(X\times\T)@>\pi>>C(X)@>>>0, \end{CD}
\]
where $\pi$ is the evaluation at $-1\in\T$. 
We write $S=C_0(\T\setminus\{-1\})$ for short. 
Let $\rho:C(X)\to C(X\times\T)$ be the homomorphism 
defined by $\rho(f)=f\otimes1$. 
Then $\pi\circ\rho$ is the identity on $C(X)$. 
This split exact sequence induces the isomorphism 
\[
(a,b)\mapsto KL(\rho)(a)+KL(\iota)(b)
\]
from $\underline{K}(C(X))\oplus\underline{K}(C(X)\otimes S)$ 
to $\underline{K}(C(X\times\T))$. 
Let $\omega_i:K_i(C(X)\otimes S)\to K_{1-i}(C(X))$ 
be the canonical isomorphism for each $i=0,1$. 
For each $j=0,1$, choose $\kappa_j\in KL(C(X)\otimes S,A\otimes B_j)$ 
such that $K_i(\kappa_j)=\xi_{1-i,j}\circ\omega_i$. 
Define $\tilde\kappa_j\in KL(C(X\times\T),A\otimes B_j)$ by 
\[
\tilde\kappa_j\circ KL(\rho)=KL(\bar\psi)\quad\text{and}\quad
\tilde\kappa_j\circ KL(\iota)=\kappa_j. 
\]
Clearly $K_0(\tilde\kappa_j)$ is unital. 
Also, 
$K_0(\tilde\kappa_j)\circ K_0(\rho)=K_0(\bar\psi)$ is (strictly) positive 
and the image of 
\[
K_0(\tilde\kappa_j)\circ K_0(\iota)=K_0(\kappa_j)=\xi_{1,j}\circ\omega_0
\]
is contained in $\Ker D_{A\otimes B_j}$. 
It follows from Lemma \ref{KL+1} that 
$K_0(\tilde\kappa_j)$ is unital and (strictly) positive. 
Thus, $\tilde\kappa_j$ is in $KL(C(X\times\T),A\otimes B_j)_{+,1}$. 
Let $\tau_0\in T(C(\T))$ be the tracial state 
corresponding to the Haar measure on $\T$ and 
define the affine continuous map 
$\lambda:T(A\otimes B_j)\to T(C(X\times\T))$ 
by $\lambda(\tau)=T(\bar\psi)(\tau)\otimes\tau_0$. 
Thanks to Theorem \ref{NWexist}, 
there exists a unital monomorphism $\sigma_j:C(X\times\T)\to A\otimes B_j$ 
such that $KL(\sigma_j)=\tilde\kappa_j$ and $T(\sigma_j)=\lambda$. 
Since $KL(\sigma_j\circ\rho)=KL(\bar\psi)$ and 
$T(\sigma_j\circ\rho)=T(\bar\psi)$, 
$\bar\psi$ and $\sigma_j\circ\rho$ are approximately unitarily equivalent 
by Theorem \ref{rr0unique}. 
Hence there exists a unitary $v_j\in A\otimes B_j$ such that 
\[
\lVert[\bar\psi(f),v_j]\rVert<\delta_2/2\quad 
\forall f\in G_2
\]
and 
\begin{align*}
\Bott(\bar\psi,v_j)(s)
&=(K_{1-i}(\sigma_j)\circ K_{1-i}(\iota)\circ\omega_{1-i}^{-1})([s]) \\
&=(K_{1-i}(\kappa_j)\circ\omega_{1-i}^{-1})([s]) \\
&=(\xi_{i,j}\circ\omega_{1-i}\circ\omega_{1-i}^{-1})([s]) \\
&=\xi_{i,j}([s])
\end{align*}
for any $s\in L_i$ and $i=0,1$. 

It is easy to see that 
\[
\lVert\bar\phi(f)-u_jv_j\bar\psi(f)v_j^*u_j^*\rVert
<\delta_2/2+\delta_2/2=\delta_2
\]
holds for any $f\in G_2$. 
In particular one has 
\[
\lVert[\bar\psi(f),v_1^*u_1^*u_0v_0]\rVert<2\delta_2<\delta_1\quad 
\forall f\in G_2. 
\]
Besides, 
when $G_2$ is sufficiently large and $\delta_2$ is sufficiently small, 
we get 
\begin{align*}
\Bott(\bar\psi,v_1^*u_1^*u_0v_0)([s])
&=\Bott(\bar\psi,v_1^*)([s])+\Bott(\bar\psi,u_1^*u_0)([s])
+\Bott(\bar\psi,v_0)([s]) \\
&=-\xi_{i,1}([s])+\xi_i([s])+\xi_{i,0}([s])=0
\end{align*}
for any $s\in L_i$ and $i=0,1$, 
where we have used \cite[(e2.6)]{L09AJM}. 
Therefore, by Theorem \ref{Basic}, 
we can find a path of unitaries $w:[0,1]\to A\otimes Q$ such that 
$w(0)=v_1^*u_1^*u_0v_0$, $w(1)=1$ and 
\[
\lVert[\bar\psi(f),w(t)]\rVert<\ep/2\quad \forall f\in F,\ t\in[0,1]. 
\]
Define a unitary $U\in Z$ by $U(t)=u_1v_1w(t)$. 
It is easy to see that 
\[
\lVert\phi(f)\otimes1-U(\psi(f)\otimes1)U^*\rVert<\ep/2+\delta_2<\ep
\]
holds for any $f\in F$. 
\end{proof}

\begin{thm}\label{main}
Let $X$ be a compact metrizable space and 
let $A\in\mathcal{C}\cup\mathcal{C}'$. 
For unital monomorphisms $\phi,\psi:C(X)\to A$, 
the following two conditions are equivalent. 
\begin{enumerate}
\item $\phi$ and $\psi$ are approximately unitarily equivalent. 
\item $KL(\phi)=KL(\psi)$, 
$\tau\circ\phi=\tau\circ\psi$ for any $\tau\in T(A)$ 
and $\Ima\Theta_{\phi,\psi}\subset\overline{\Ima D_A}$. 
\end{enumerate}
\end{thm}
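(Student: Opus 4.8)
The plan is the following. The implication (1)$\Rightarrow$(2) is immediate from results already recorded: if $\phi$ and $\psi$ are approximately unitarily equivalent then $KL(\phi)=KL(\psi)$ by the discussion in Subsection 2.2, $\tau\circ\phi=\tau\circ\psi$ for every $\tau\in T(A)$ because tracial states are norm-continuous, and $\Ima\Theta_{\phi,\psi}\subset\overline{\Ima D_A}$ by Lemma \ref{Theta}~(3). So the content is the converse (2)$\Rightarrow$(1), for which I would first reduce $X$ to a finite CW complex, then apply (a routine extension of) Proposition \ref{AtimesZ} to obtain an approximate unitary equivalence between $f\mapsto\phi(f)\otimes1$ and $f\mapsto\psi(f)\otimes1$ over $A\otimes Z$, and finally descend to $A$ using that $Z$ is strongly self-absorbing.

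For the reduction, fix a finite $F\subset C(X)$ and $\ep>0$. Write $X=\varprojlim X_n$ with each $X_n$ a finite CW complex (finite polyhedron) and all bonding maps surjective, so that $C(X)=\varinjlim(C(X_n),\iota_n)$ with every $\iota_n$ injective. Choosing $n$ and a finite subset $F'\subset C(X_n)$ so that each $f\in F$ lies within $\ep/3$ of some element of $\iota_n(F')$, it is enough to produce a unitary $u\in A$ with $\lVert\phi(\iota_n(g))-u\psi(\iota_n(g))u^*\rVert<\ep/3$ for $g\in F'$; hence we may work with the unital monomorphisms $\phi\circ\iota_n,\psi\circ\iota_n:C(X_n)\to A$. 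By naturality of the three invariants --- $KL(\phi\circ\iota_n)=KL(\phi)\circ KL(\iota_n)$, likewise for traces, and $\Theta_{\phi\circ\iota_n,\psi\circ\iota_n}=\Theta_{\phi,\psi}\circ K_1(\iota_n)$ --- these maps again satisfy condition (2). Since $X_n$ is the disjoint union of its finitely many connected components $X_{n,j}$, its function algebra is $\bigoplus_jC(X_{n,j})$, which falls under the scope of Theorem \ref{Basic2} and Corollary \ref{NWexist2}; the proof of Proposition \ref{AtimesZ} then goes through with only notational changes, using Theorem \ref{Basic2} and Corollary \ref{NWexist2} in place of Theorem \ref{Basic} and Theorem \ref{NWexist}, and yields a unitary $u\in A\otimes Z$ with $\lVert(\phi\circ\iota_n)(g)\otimes1-u((\psi\circ\iota_n)(g)\otimes1)u^*\rVert<\ep/3$ for all $g\in F'$. (Alternatively one can first conjugate $\psi$ by a unitary of $A$ so that $\phi(1_{X_{n,j}})=\psi(1_{X_{n,j}})$ for each $j$ --- possible because $KL(\phi)=KL(\psi)$ gives $[\phi(1_{X_{n,j}})]=[\psi(1_{X_{n,j}})]$ and $A$, being stably finite of stable rank one, has cancellation of projections --- which changes none of the invariants by Lemma \ref{Theta}~(2), and then apply Proposition \ref{AtimesZ} to each corner $q_jAq_j\in\mathcal{C}\cup\mathcal{C}'$.)

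It remains to descend from $A\otimes Z$ to $A$. By \cite{RW} the algebra $Z$ is isomorphic to the Jiang-Su algebra $\mathcal{Z}$, and $\mathcal{Z}$ is strongly self-absorbing; since $A$ is $\mathcal{Z}$-stable, the first-factor embedding $a\mapsto a\otimes1$ of $A$ into $A\otimes Z$, composed with a suitable isomorphism $\Gamma:A\otimes Z\to A$, is approximately unitarily equivalent to $\id_A$. Applying $\Gamma$ to the approximate unitary equivalence just obtained, and then absorbing the inner perturbation coming from this last fact, we find a unitary $v\in A$ with $\lVert\phi(\iota_n(g))-v\psi(\iota_n(g))v^*\rVert<\ep$ for $g\in F'$; together with the choice of $\iota_n$ and $F'$ this gives the required unitary for $F$ and $\ep$, so $\phi$ and $\psi$ are approximately unitarily equivalent.

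The genuinely substantive work is all inside Proposition \ref{AtimesZ}, which in turn rests on the basic homotopy lemma of Section 5, the existence theorem \ref{NWexist}, and the uniqueness theorems of Section 4. In the argument above, the main points demanding attention are: checking that $KL$, $T$ and $\Theta$ behave functorially under the maps $\iota_n$, so that hypothesis (2) survives the reduction; verifying that the proof of Proposition \ref{AtimesZ} indeed extends to a domain of the form $\bigoplus_jC(X_{n,j})$ (equivalently, that the corners $q_jAq_j$ remain in $\mathcal{C}\cup\mathcal{C}'$ and the invariants restrict correctly); and controlling the constants in the strong self-absorption step so that the final error stays below $\ep$.
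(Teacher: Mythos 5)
Most of your outline coincides with the paper's own argument: the reduction of a general compact metrizable $X$ to finite CW complexes via an inverse limit with surjective bonding maps (the paper does exactly this through Marde\v{s}i\'{c}'s lemma, which supplies a unital monomorphism $\sigma:C(Y)\to C(X)$ with the required approximation), the observation that $KL$, $T$ and $\Theta$ pull back naturally so that hypothesis (2) survives, and the treatment of a disconnected finite CW complex by matching the projections $\phi(1_{X_{n,j}})$, $\psi(1_{X_{n,j}})$ using cancellation and passing to corners $q_jAq_j\in\mathcal{C}\cup\mathcal{C}'$ --- this last is precisely the paper's argument (your primary suggestion of redoing Proposition \ref{AtimesZ} for direct sums via Theorem \ref{Basic2} and Corollary \ref{NWexist2} would also work, but is not needed). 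The substantive input is indeed Proposition \ref{AtimesZ}, as you say.

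There is, however, a genuine error in your descent from $A\otimes Z$ to $A$. The algebra $Z$ of \cite{RW} is the generalized dimension-drop algebra $\{f\in C([0,1],M_{\mathfrak p}\otimes M_{\mathfrak q}):f(0)\in M_{\mathfrak p}\otimes\C,\ f(1)\in\C\otimes M_{\mathfrak q}\}$; it is \emph{not} isomorphic to $\mathcal{Z}$ (it is not even simple --- it has ideals coming from closed subsets of $(0,1)$, and many traces). Consequently there is no isomorphism $\Gamma:A\otimes Z\to A$: $A$ is simple while $A\otimes Z$ is not, so the map you propose to ``compose with a suitable isomorphism'' does not exist, and the strong self-absorption of $\mathcal{Z}$ cannot be invoked for $Z$ itself. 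The correct (and easy) repair is the one the paper uses: by \cite[Proposition 3.3]{RW}, $Z$ embeds \emph{unitally} into $\mathcal{Z}$, so the unitary produced by Proposition \ref{AtimesZ} may be regarded as a unitary of $A\otimes\mathcal{Z}$, giving the approximate intertwining of $f\mapsto\phi(f)\otimes1$ and $f\mapsto\psi(f)\otimes1$ as maps into $A\otimes\mathcal{Z}$; one then uses that $\mathcal{Z}$ is strongly self-absorbing and $A\cong A\otimes\mathcal{Z}$ --- e.g.\ via an approximately inner endomorphism $\pi$ of $A\otimes\mathcal{Z}$ with image $A\otimes\C$, so that $\phi\sim\pi\circ\phi$ and $\psi\sim\pi\circ\psi$ --- to transport this equivalence back to $A$. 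With that substitution (and keeping your corner/cancellation step, where Lemma \ref{Theta}~(2) guarantees the invariants are unchanged under the initial conjugation), your argument matches the paper's proof.
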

\begin{proof}
It is straightforward to check that (1) implies (2). 
Indeed, $KL(\phi)=KL(\psi)$ and $T(\phi)=T(\psi)$ are clear. 
By Lemma \ref{Theta}, 
$\Ima\Theta_{\phi,\psi}\subset\overline{\Ima D_A}$ also follows. 

We would like to show the other implication. 
We first consider the case that $X$ is a connected finite CW complex. 
Let $\phi,\psi:C(X)\to A$ be unital monomorphisms 
satisfying $KL(\phi)=KL(\psi)$, $T(\phi)=T(\psi)$ and 
$\Ima\Theta_{\phi,\psi}\subset\overline{\Ima D_A}$. 
We may replace the target algebra $A$ with $A\otimes\mathcal{Z}$, 
because $A$ is $\mathcal{Z}$-absorbing. 
Since $\mathcal{Z}$ is strongly self-absorbing (\cite{TW07TAMS}), 
there exists an approximately inner endomorphism 
$\pi:A\otimes\mathcal{Z}\to A\otimes\mathcal{Z}$ 
such that $\pi(A\otimes\mathcal{Z})=A\otimes\C$. 
Hence $\phi$ and $\pi\circ\phi$ (resp. $\psi$ and $\pi\circ\psi$) are 
approximately unitarily equivalent. 
By \cite[Proposition 3.3]{RW}, 
the $C^*$-algebra $Z$ embeds unitally into $\mathcal{Z}$. 
It follows from Proposition \ref{AtimesZ} that 
$\pi\circ\phi$ and $\pi\circ\psi$ are 
approximately unitarily equivalent. 
Therefore $\phi$ and $\psi$ are approximately unitarily equivalent. 

A general finite CW complex is 
a finite union of pairwise disjoint connected finite CW complexes. 
Since $A$ has cancellation by \cite[Theorem 6.7]{R04IJM} 
and $eAe$ is in $\mathcal{C}\cup\mathcal{C}'$ 
for any non-zero projection $e\in A$, 
the conclusion follows from the previous case. 

Let $X$ be a compact metrizable space. 
Let $\{f_1,f_2,\dots,f_n\}$ be a finite subset of $C(X)$ and let $\ep>0$. 
By \cite[Lemma 1]{Mardesic}, 
there exist a finite CW complex (actually a finite simplicial complex) $Y$, 
a finite subset $\{g_1,g_2,\dots,g_n\}$ of $C(Y)$ 
and a unital monomorphism $\sigma:C(Y)\to C(X)$ such that 
$\lVert f_i-\sigma(g_i)\rVert<\ep/3$ for any $i=1,2,\dots,n$. 
Clearly $KL(\phi\circ\sigma)=KL(\psi\circ\sigma)$, 
$T(\phi\circ\sigma)=T(\psi\circ\sigma)$ and 
$\Ima\Theta_{\phi\circ\sigma,\psi\circ\sigma}$ is contained 
in the closure of $\Ima D_A$. 
It follows from the argument above that 
$\phi\circ\sigma$ and $\psi\circ\sigma$ are 
approximately unitarily equivalent. 
Hence there exists a unitary $u\in A$ such that 
$\lVert\phi(\sigma(g_i))-u\psi(\sigma(g_i))u^*\rVert<\ep/3$, 
which implies $\lVert\phi(f_i)-u\psi(f_i)u^*\rVert<\ep$. 
Thus, $\phi$ and $\psi$ are approximately unitarily equivalent. 
\end{proof}

\begin{rem}
In the theorem above, if $A$ has real rank zero, then 
the image of $D_A$ is dense in $\Aff(T(A))$. 
Hence the condition $\Ima\Theta_{\phi,\psi}\subset\overline{\Ima D_A}$ 
is trivially satisfied. 
\end{rem}

\begin{cor}\label{mainAH}
Let $C$ be a unital AH algebra and 
let $A\in\mathcal{C}\cup\mathcal{C}'$. 
For unital monomorphisms $\phi,\psi:C\to A$, 
the following two conditions are equivalent. 
\begin{enumerate}
\item $\phi$ and $\psi$ are approximately unitarily equivalent. 
\item $KL(\phi)=KL(\psi)$, 
$\tau\circ\phi=\tau\circ\psi$ for any $\tau\in T(A)$ 
and $\Ima\Theta_{\phi,\psi}\subset\overline{\Ima D_A}$. 
\end{enumerate}
\end{cor}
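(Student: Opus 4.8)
The plan is to deduce this from Theorem \ref{main} in exactly the way Corollary \ref{rr0uniqueAH} is deduced from Theorem \ref{rr0unique}, the only new ingredient being that the invariant $\Theta_{\phi,\psi}$ behaves well under the reduction steps. The implication (1)$\Rightarrow$(2) is immediate: $KL(\phi)=KL(\psi)$ and $\tau\circ\phi=\tau\circ\psi$ are automatic for approximately unitarily equivalent maps, while $\Ima\Theta_{\phi,\psi}\subset\overline{\Ima D_A}$ is Lemma \ref{Theta} (3). So the content is in (2)$\Rightarrow$(1).

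First I would reduce, by approximating a given finite subset of $C$ inside a finite direct sum of building blocks, splitting the direct sum, and splitting each spectrum into connected components, to the case $C=p(C(X)\otimes M_k)p$ with $X$ a connected compact metrizable space and $p$ a projection of constant strictly positive rank; the hypotheses pass to restrictions because $\Theta$ is functorial in the obvious way (for a unital embedding $\iota$ one has $\Theta_{\phi\circ\iota,\psi\circ\iota}=\Theta_{\phi,\psi}\circ K_1(\iota)$). Following the proof of Corollary \ref{rr0uniqueAH}, I would then pass to $C\otimes M_l$, pick a projection $q$ Murray--von Neumann equivalent to $1_{C(X)}\otimes r$ with $p\otimes e$ a subprojection of $q$, and restrict, thereby reducing to the case $p=1$, i.e. $C=C(X)\otimes M_k$. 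In that case one uses cancellation in $A$ to conjugate $\psi$ so that $\phi$ and $\psi$ agree on $1\otimes M_k$; by Lemma \ref{Theta} (2) this leaves $\Theta_{\phi,\psi}$ unchanged. Writing $E=\phi(1\otimes e)$ for a minimal projection $e\in M_k$, the compressions $\phi'(f)=\phi(f\otimes e)$ and $\psi'(f)=\psi(f\otimes e)$ are unital monomorphisms from $C(X)$ into $EAE$, which again lies in $\mathcal{C}\cup\mathcal{C}'$. Using the affine homeomorphism $T(A)\cong T(EAE)$ given by $\tau\mapsto\tau|_{EAE}/\tau(E)$, together with the fact that every tracial state of $A$ restricts to the normalized trace on the matrix factor $M_k$ of the image (so that $\tau(E)=1/k$ for every $\tau\in T(A)$), one checks $KL(\phi')=KL(\psi')$ and $T(\phi')=T(\psi')$; Theorem \ref{main} then shows $\phi'$ and $\psi'$ are approximately unitarily equivalent, hence so are $\phi$ and $\psi$.

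The one step beyond Corollary \ref{rr0uniqueAH}, and where I expect the (modest) real work to lie, is checking $\Ima\Theta_{\phi',\psi'}\subset\overline{\Ima D_{EAE}}$. I would compare the two invariants directly: given a unitary $w$ over $C(X)$, amplifying it through $e$ and padding by the identity off the $e$-corner gives a unitary $\widetilde w$ over $C(X)\otimes M_k$ representing the corresponding class under $K_1(C(X)\otimes M_k)\cong K_1(C(X))$; since $\phi$ and $\psi$ agree on $1\otimes M_k$ one has $\phi(\widetilde w)^*\psi(\widetilde w)=(\phi'(w)^*\psi'(w))\oplus(1-\phi(1\otimes e))$, and a short computation with the de la Harpe--Skandalis determinant (a path supported off the $e$-corner contributes nothing to the determinant integral) yields $\Theta_{\phi,\psi}([\widetilde w])(\tau)=\tau(E)\,\Theta_{\phi',\psi'}([w])\bigl(\tau|_{EAE}/\tau(E)\bigr)$ modulo $\Ima D_A$. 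Since $\tau(E)=1/k$ is constant and $\Ima D_{EAE}$ corresponds to $k\,\Ima D_A$ under $T(A)\cong T(EAE)$, the hypothesis $\Ima\Theta_{\phi,\psi}\subset\overline{\Ima D_A}$ passes to $\Ima\Theta_{\phi',\psi'}\subset\overline{\Ima D_{EAE}}$, as the closure is preserved under multiplication by the positive constant $k$. The identical bookkeeping, with the trace-rescaling factor $\tau((\phi\otimes\id)(q))$ in place of $\tau(E)$ (again constant, since $q$ has constant rank and every trace of $A$ restricts to a normalized matricial trace), handles the $C\to C\otimes M_l\to q(C\otimes M_l)q$ reduction after a further conjugation supplied by cancellation to make $\phi\otimes\id$ and $\psi\otimes\id$ agree on $q$; and the final restriction back to the $(p\otimes e)$-corner is routine, exactly as in Corollary \ref{rr0uniqueAH}.
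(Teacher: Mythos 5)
Your route is the paper's own: the paper proves this corollary ``in the same way as Corollary \ref{rr0uniqueAH}'', i.e.\ by the reduction from a general unital AH algebra to $p(C(X)\otimes M_k)p$, then to $C(X)\otimes M_k$, then to a corner of $A$ via a minimal matrix projection, now quoting Theorem \ref{main} instead of Theorem \ref{rr0unique}; the only genuinely new content is checking that $\Ima\Theta_{\phi,\psi}\subset\overline{\Ima D_A}$ survives these reductions, which the paper leaves implicit and you carry out. Your de la Harpe--Skandalis bookkeeping for the compression by $E=\phi(1\otimes e)$ is correct (a representative of $\Theta_{\phi,\psi}([\widetilde w])$ is $\tau\mapsto\tau(E)h(\tau')$ with $h$ representing $\Theta_{\phi',\psi'}([w])$, $\tau(E)=1/k$ is constant, $\Ima D_{EAE}$ corresponds to $k\,\Ima D_A$, and closures are preserved), and the weight $\tau((\phi\otimes\id)(q))$ at the $q$-step is indeed constant --- though the clean reason is not the ``normalized matricial trace'' remark but simply that $q$ has constant rank $k$ and $p$ constant rank $m$, so every trace of $C$, in particular every $\tau\circ\phi$, assigns $q$ the value $k/m$.

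The one step your stated justifications do not cover is the very first reduction, splitting the finite-stage algebra into direct summands (and spectra into connected components). The summand inclusions $C_i\hookrightarrow C$ are not unital, so $\Theta_{\phi\circ\iota,\psi\circ\iota}=\Theta_{\phi,\psi}\circ K_1(\iota)$ does not apply; after conjugating so that $\psi(p_i)=\phi(p_i)=:\bar p_i$, you must verify the hypothesis for the restrictions $C_i\to\bar p_iA\bar p_i$, and there the rescaling factor $\tau(\bar p_i)$ is in general \emph{not} constant, so the constant-factor argument you rely on is unavailable at exactly this point. The gap is easily repaired, because in your own computation the weight cancels exactly: under the canonical identifications $K_0(\bar p_iA\bar p_i)\cong K_0(A)$ and $T(A)\cong T(\bar p_iA\bar p_i)$ one has $D_{\bar p_iA\bar p_i}(x)(\tau')=D_A(x)(\tau)/\tau(\bar p_i)$, while the padded unitary gives a representative of $\Theta_{\phi,\psi}$ equal to $\tau(\bar p_i)$ times the corner representative; hence the bounded-below linear isomorphism $g\mapsto(\tau\mapsto\tau(\bar p_i)g(\tau'))$ of $\Aff(T(\bar p_iA\bar p_i))$ onto $\Aff(T(A))$ carries $\Ima D_{\bar p_iA\bar p_i}$ onto $\Ima D_A$ and closures onto closures, and no constancy is needed anywhere. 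Alternatively, you can skip the connectedness reduction altogether, as the paper does, since Theorem \ref{main} allows an arbitrary compact metrizable $X$; but then $p$ need not have constant rank and the weight at the $q$-step may again be non-constant, so the weight-free form of the argument just described is what should be recorded in either version.
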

\begin{proof}
We can prove this in the same way as Corollary \ref{rr0uniqueAH}. 
\end{proof}

\section{Homomorphisms between simple $\mathcal{Z}$-stable $C^*$-algebras}

In this section we prove Theorem \ref{main2}. 
The main idea is almost the same as 
Proposition \ref{AtimesZ} and Theorem \ref{main}. 
The proof is, however, somewhat lengthy 
because we must work with finitely generated subgroups of $K_*(C)$ 
so as to use Lemma \ref{decomp}. 

\begin{thm}\label{main2}
Let $C$ be a nuclear $C^*$-algebra in $\mathcal{C}$ satisfying the UCT and 
let $A\in\mathcal{C}\cup\mathcal{C}'$. 
For any unital homomorphisms $\phi,\psi:C\to A$, 
the following are equivalent. 
\begin{enumerate}
\item $\phi$ and $\psi$ are approximately unitarily equivalent. 
\item $KL(\phi)=KL(\psi)$ and 
$\Ima\Theta_{\phi,\psi}\subset\overline{\Ima D_A}$. 
\end{enumerate}
\end{thm}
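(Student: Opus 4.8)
\noindent
The implication $(1)\Rightarrow(2)$ is routine: approximate unitary equivalence preserves $KL$, and $\Ima\Theta_{\phi,\psi}\subset\overline{\Ima D_A}$ is Lemma \ref{Theta} (3). One should observe at the outset that for $C\in\mathcal{C}$ the hypothesis $KL(\phi)=KL(\psi)$ already subsumes $T(\phi)=T(\psi)$: since $C\otimes Q$ has tracial rank zero, $T(C)$ is affinely homeomorphic to the state space of $K_0(C)$, so $K_0(\phi)=K_0(\psi)$ determines the induced maps on traces. This is used silently below.

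For $(2)\Rightarrow(1)$ the plan is to run the proof of Proposition \ref{AtimesZ} and Theorem \ref{main} with $C$ in place of $C(X)$. First I would make the same reduction to a statement about $A\otimes Z$: as $A\cong A\otimes\mathcal{Z}$ and $\mathcal{Z}$ is strongly self-absorbing, there is an approximately inner endomorphism $\pi$ of $A\otimes\mathcal{Z}$ with $\pi(A\otimes\mathcal{Z})=A\otimes\C$, so $\phi$ and $\pi\circ\phi$ (resp.\ $\psi$ and $\pi\circ\psi$) are approximately unitarily equivalent; since $Z$ embeds unitally into $\mathcal{Z}$, it then suffices to produce, for every finite $F\subset C$ and $\ep>0$, a unitary $u\in A\otimes Z$ with $\lVert\phi(f)\otimes1-u(\psi(f)\otimes1)u^*\rVert<\ep$ for all $f\in F$. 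With $Q=M_{\mathfrak p}\otimes M_{\mathfrak q}$, $B_0=M_{\mathfrak p}$ and $B_1=M_{\mathfrak q}$, Remark \ref{fourclasses} puts $A\otimes Q$, $A\otimes B_0$, $A\otimes B_1$ in $\mathcal{T}\cup\mathcal{T}'$ and $C\otimes Q$, $C\otimes B_0$, $C\otimes B_1$ in $\mathcal{T}$. The construction of $u$ then copies Proposition \ref{AtimesZ}: build $u_j\in A\otimes B_j$ implementing approximate unitary equivalence, on a large finite set, of the maps $\bar\phi\colon f\mapsto\phi(f)\otimes1$ and $\bar\psi\colon f\mapsto\psi(f)\otimes1$, with the de la Harpe-Skandalis determinant of $\phi(w)^*u_j\psi(w)u_j^*$ equal to a prescribed lift $\eta$ of $\Theta_{\phi,\psi}$; split the Bott data $\xi_i=\Bott(\bar\psi,u_1^*u_0)$ as $\xi_i=\xi_{i,1}-\xi_{i,0}$ with $\xi_{i,j}$ valued in $K_{1-i}(A\otimes B_j)$; realize the pieces by unitaries $v_j\in A\otimes B_j$ by an existence theorem; connect $v_1^*u_1^*u_0v_0$, whose Bott invariant has then been arranged to vanish on the relevant finite set, to $1$ by the basic homotopy lemma; and put $U(t)=u_1v_1w(t)\in A\otimes Z$. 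This requires the $C$-domain analogues of Theorems \ref{rr0unique}, \ref{NWexist}, \ref{Basic} and \ref{Basic2} and of Lemmas \ref{adjust1} and \ref{adjust2}, which are obtained by the same arguments, using the unital simple AH model of Theorem \ref{LinK} when the target lies in $\mathcal{T}'$ and Lin's existence theorems for nuclear $C^*$-algebras satisfying the UCT that map into algebras of tracial rank zero.

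The one genuinely new feature---and the source of the extra length---is that $K_*(C)$ is no longer finitely generated, so that neither Lemma \ref{decomp}, nor the lifting of $\Theta_{\phi,\psi}$ to an honest homomorphism $\eta$ into $\Aff(T(A))$, nor the ``$\Bott$ is a partial homomorphism'' bookkeeping may be applied to $K_*(C)$ wholesale. The remedy, which I would set up before anything else, is to localize to a single finitely generated subgroup of $\underline{K}(C)$ chosen large enough to control the given $F$ and $\ep$: one fixes a finite subset $L=L_0\cup L_1\subset\mathcal{K}(C)$ with $L_0\subset P(C\otimes\K)$ and $L_1\subset U_\infty(C)$, so that $[L_0]\cup[L_1]$ generates that subgroup and $[L_1]$ generates a finitely generated---hence, modulo torsion, free---subgroup of $K_1(C)$. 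On such a subgroup $\Theta_{\phi,\psi}$ does lift, by Lemma \ref{Theta} (4) together with freeness, and Lemma \ref{decomp} does apply to the restricted maps $\xi_i|_{\langle[L_i]\rangle}$; all the intertwining steps are then carried out at this finite level. I expect the hard part to be precisely this bookkeeping: the choices must be made in a compatible order---the finitely generated subgroup (large enough for the finite set $L$ produced by the homotopy lemma and for controlling $\Theta_{\phi,\psi}$), then $\eta$, then the decompositions $\xi_i=\xi_{i,1}-\xi_{i,0}$ and the unitaries $v_j$---and one must verify that enlarging $F$ or shrinking $\ep$ merely enlarges the relevant subgroup, so that the several approximate-intertwining estimates chain together exactly as in Proposition \ref{AtimesZ}.
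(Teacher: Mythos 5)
Your overall strategy is the paper's: reduce to producing a unitary in $A\otimes Z$, construct $u_j\in A\otimes B_j$ by the determinant-controlled uniqueness statement (Lemma \ref{adjust2}) with a lift $\eta$ of $\Theta_{\phi,\psi}$, split the Bott obstruction $\xi_i=\Bott(\bar\psi,u_1^*u_0)$ via Lemma \ref{decomp} after localizing to finitely generated subgroups, correct by unitaries $v_j$, and finish with the basic homotopy lemma and $U(t)=u_1v_1w(t)$; the remarks on $(1)\Rightarrow(2)$ and on $KL(\phi)=KL(\psi)$ forcing $T(\phi)=T(\psi)$ are also fine.

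There is, however, a genuine gap at the realization step, which is exactly where the hypotheses ``nuclear, UCT, $C\in\mathcal{C}$'' must be used and which your sketch leaves to ``$C$-domain analogues obtained by the same arguments.'' Localizing to a finitely generated subgroup $H'_i\subset K_i(C)$ does let you lift $\Theta_{\phi,\psi}$ and apply Lemma \ref{decomp}, but the resulting maps $\xi_{i,j}:H'_i\to K_{1-i}(A\otimes B_j)$ cannot in general be extended to all of $K_i(C)$: unlike $\xi_i$, whose target $K_{1-i}(A)\otimes\Q$ (resp.\ $\Ker D_{A\otimes Q}$) is divisible, the targets here involve $K_0(B_j)$, which is not divisible. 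So this data does not yield an element of $KL(C\otimes S,A\otimes B_j)$, and no existence theorem with domain $C$ or $C\otimes C(\T)$ can be fed with it; moreover Theorems \ref{NWexist}, \ref{Basic}, \ref{Basic2} and Lemmas \ref{adjust1}, \ref{adjust2} are only available for homogeneous (finite CW) domains, and they do not extend to a general $C\in\mathcal{C}$ ``by the same arguments.'' The paper instead invokes the classification theorem of \cite{L04Duke} to see that $C\otimes Q$ is a unital simple AT algebra and that each $C\otimes B_j$ is a unital simple AH algebra of real rank zero with slow dimension growth, and then localizes the \emph{algebra}, not only the $K$-theory: it chooses finite homogeneous subalgebras $C_j\subset C\otimes B_j$ approximating the relevant finite sets and the $L_i$, arranges $\Ima K_*(\gamma_j)\subset H'_*\otimes K_0(B_j)$, composes with $K_*(\gamma_j)$ to obtain honest classes $\tilde\kappa_j\in KL(C_j\otimes C(\T),A\otimes B_j)_{+,1}$, realizes them by Corollary \ref{NWexist2}, and matches them to $\bar\psi\circ\gamma_j$ by Corollary \ref{rr0uniqueAH}; likewise the homotopy lemma is applied through the AT structure of $C\otimes Q$. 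Without this local approximation step your plan to ``realize the pieces by unitaries $v_j$ by an existence theorem'' does not go through, and nuclearity and the UCT of $C$ are never actually brought to bear.
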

\begin{proof}
The implication (1)$\Rightarrow$(2) is trivial. 
We would like to show the other implication. 
Note that $KL(\phi)=KL(\psi)$ implies $T(\phi)=T(\psi)$, 
because projections of $C$ separate traces. 
Since $\mathcal{Z}$ is strongly self-absorbing (\cite{TW07TAMS}) and 
$C$ (resp. $A$) is $\mathcal{Z}$-stable by assumption, 
there exists an isomorphisms $\pi_C:C\to C\otimes\mathcal{Z}$ 
(resp. $\pi_A:A\to A\otimes\mathcal{Z}$) 
which is approximately unitarily equivalent to 
the unital monomorphism $c\mapsto c\otimes1$. 
It is not so hard to see that any unital homomorphism $\phi:C\to A$ is 
approximately unitarily equivalent to 
$\pi_A^{-1}\circ(\phi\otimes\id)\circ\pi_C$. 
Hence, for given homomorphisms $\phi,\psi$ 
satisfying $KL(\phi)=KL(\psi)$ and 
$\Ima\Theta_{\phi,\psi}\subset\overline{\Ima D_A}$, 
it suffices to show that 
$\phi\otimes\id:C\otimes\mathcal{Z}\to A\otimes\mathcal{Z}$ is 
approximately unitarily equivalent to 
$\psi\otimes\id:C\otimes\mathcal{Z}\to A\otimes\mathcal{Z}$. 

Suppose that 
we are given a finite subset $F\subset C\otimes\mathcal{Z}$ and $\ep>0$. 
We would like to show $\phi\otimes\id\sim_{F,\ep}\psi\otimes\id$. 
Without loss of generality, 
we may assume that $F$ is contained in $C\otimes\C$. 

Let $Z,B_0,B_1,Q$ be as in Proposition \ref{AtimesZ}. 
We identify $T(A\otimes Q)$, $T(A\otimes B_j)$ with $T(A)$. 
Put $\bar\phi=\phi\otimes\id$, $\bar\psi=\psi\otimes\id$. 
As in the proof of Proposition \ref{AtimesZ}, 
we omit `$\otimes\C$', `$\otimes1$' and `$\otimes\Tr$' to simplify notation. 

By the classification theorem in \cite{L04Duke}, 
$C\otimes Q$ is a unital simple AT algebra with real rank zero. 
Thus, $C\otimes Q$ can be written as an inductive limit of $C^*$-algebras 
of the form $\bigoplus_{i=1}^nM_{k_i}(C(\T))$. 
By using Theorem \ref{Basic2} to 
$\bar\psi:C\otimes Q\to A\otimes Q$, $F$ and $\ep/2$, 
we obtain a finite subset $L\subset\mathcal{K}(C\otimes Q)$, 
a finite subset $G_1\subset C\otimes Q$ and $\delta_1>0$. 
By Remark \ref{Qstable}, we may and do assume that 
$L$ is written as $L=L_0\cup L_1$, 
where $L_0$ is a finite subset of $P(C\otimes Q\otimes\K)$ and 
$L_1$ is a finite subset of $U_\infty(C\otimes Q)$. 
We may further assume that $L_0$ and $L_1$ are 
finite subsets of $P(C\otimes\K)$ and $U_\infty(C)$ respectively, 
because $\Bott$ gives rise to a `partial homomorphism' (see Section 2.3). 
Let $H_i\subset K_i(C)$ be the subgroup generated by $L_i$. 
Since $H_i$ is finitely generated, 
one can find a finite subset $G_2\subset C\otimes Q$ and $\delta_2>0$ 
such that the following holds: 
For any unitary $w\in A\otimes Q$ satisfying 
$\lVert[\bar\psi(c),w]\rVert<\delta_2$ for any $c\in G_2$, 
there exist $\xi_i\in\Hom(H_i,K_{1-i}(A\otimes Q))$ such that 
$\xi_i([s])=\Bott(\bar\psi,w)(s)$ for any $s\in L_i$ and $i=0,1$ 
(\cite[Section 2]{L0612}). 
We may assume that $G_2$ contains $G_1$ and 
that $\delta_2$ is less than $\delta_1$. 
As $C\otimes Q$ is generated by $C$, $B_0$ and $B_1$, 
one may choose finite subsets 
$G_3\subset C$, $G_{3,0}\subset B_0$, $G_{3,1}\subset B_1$ and $\delta_3>0$ 
so that if a unitary $w\in A\otimes Q$ satisfies 
$\lVert[\bar\psi(c),w]\rVert<\delta_3$ 
for every $c\in G_3\cup G_{3,0}\cup G_{3,1}$, 
then $\lVert[\bar\psi(c),w]\rVert<\delta_2$ holds for all $c\in G_2$. 
We assume that $G_3$ contains $F$ and that $\delta_3$ is less than $\ep$. 
For each $j=0,1$, by the classification theorem in \cite{L04Duke}, 
$C\otimes B_j$ is a unital simple AH algebra 
with real rank zero and slow dimension growth, and so 
one can find a unital subalgebra $C_j\subset C\otimes B_j$ 
such that the following hold. 
\begin{itemize}
\item $C_j$ is a finite direct sum of $C^*$-algebras of 
the form $p(C(X)\otimes M_k)p$, 
where $X$ is a connected finite CW complex 
(with dimension at most three) and $p\in C(X)\otimes M_k$ is a projection. 
\item There exists a finite subset $G'_{3,j}\subset C_j$ such that 
any elements of $G_3\cup G_{3,j}$ are 
within distance $\delta_3/12$ of $G'_{3,j}$. 
\item There exist finite subsets 
$L'_{0,j}\in P(C_j\otimes\K)$ and $L'_{1,j}\subset U_\infty(C_j)$ such that 
any elements of $L_i$ are within distance $1/2$ of $L'_{i,j}$ 
for each $i=0,1$. 
Let $L_i\ni s\mapsto s'_j\in L'_{i,j}$ be a map 
such that $\lVert s-s'_j\rVert<1/2$. 
We further require that $L'_{1,j}$ generates $K_1(C_j)$. 
\end{itemize}
Let $\gamma_j:C_j\to C\otimes B_j$ denote the embedding map. 
For each $i=0,1$, 
we choose a finitely generated subgroup $H'_i\subset K_i(C)$ 
so that $H_i$ is contained in $H'_i$ and 
$\Ima K_i(\gamma_j)$ is contained in $H'_i\otimes K_i(B_j)$ for each $j=0,1$. 
Applying Lemma \ref{adjust2} to 
$\bar\phi\circ\gamma_j:C_j\to A\otimes B_j$, 
$\bar\psi\circ\gamma_j:C_j\to A\otimes B_j$, 
$G'_{3,j}$ and $\delta_3/12$, we get $\delta_{4,j}>0$ for each $j=0,1$. 

As in the proof of Proposition \ref{AtimesZ}, 
we can find a homomorphism $\eta:H'_1\to\Aff(T(A))$ such that 
\[
\eta(x)+\Ima D_A=\Theta_{\phi,\psi}(x)\quad \forall x\in H'_1
\]
and 
\[
\lVert\tilde\eta_j([\gamma_j(w)])\rVert<\delta_{4,j}\quad 
\forall w\in L'_{1,j},\ j=0,1, 
\]
where $\tilde\eta_j\in\Hom(H'_1\otimes K_0(B_j),\Aff(T(A)))$ denotes 
the homomorphism induced from $\eta$. 
It follows from Lemma \ref{adjust2} that 
there exists a unitary $u_j\in A\otimes B_j$ such that 
\[
\lVert\bar\phi(c)-u_j\bar\psi(c)u_j^*\rVert<\delta_3/12\quad 
\forall c\in G'_{3,j}
\]
and 
\[
\frac{1}{2\pi\sqrt{-1}}\tau(\log(\bar\phi(w)^*u_j\bar\psi(w)u_j^*))
=\tilde\eta_j([\gamma_j(w)])(\tau)\quad 
\forall\tau\in T(A\otimes B_j),\ w\in L'_{1,j}. 
\]
By choosing $G'_{3,j}$ large enough in advance, 
we may also assume that 
$\lVert\bar\phi(w)-u_j\bar\psi(w)u_j^*\rVert$ is less than $1/2$ 
for every $w\in L'_{1,j}$. 
From the choice of $G'_{3,j}$, 
we obtain $\lVert\bar\phi(c)-u_j\bar\psi(c)u_j^*\rVert<\delta_3/4$ 
for all $c\in G_3\cup G_{3,j}$. 
If $c$ is in $G_{3,1-j}$, then 
$u_j\in A\otimes B_j$ commutes with $\bar\psi(c)\in B_{1-j}$, 
and so $\bar\phi(c)=\bar\psi(c)=u_j\bar\psi(c)u_j^*$. 
Therefore 
\[
\lVert[\bar\psi(c),u_1^*u_0]\rVert<\delta_3/2\quad 
\forall c\in G_3\cup G_{3,0}\cup G_{3,1}. 
\]
From the choice of $G_3$, $G_{3,0}$, $G_{3,1}$ and $\delta_3$, one has 
\[
\lVert[\bar\psi(c),u_1^*u_0]\rVert<\delta_2\quad 
\forall c\in G_2. 
\]
Then, from the choice of $G_2$ and $\delta_2$, 
we can find $\xi_i\in\Hom(H_i,K_{1-i}(A\otimes Q))$ such that 
$\xi_i([s])=\Bott(\bar\psi,u_1^*u_0)(s)$ holds for any $s\in L_i$ and $i=0,1$. 
By \cite[Theorem 3.6]{L09AJM}, 
\begin{align*}
D_{A\otimes Q}(\xi_1([w]))(\tau)
&=D_{A\otimes Q}(\Bott(\bar\psi,u_1^*u_0)(w))(\tau) \\
&=\frac{1}{2\pi\sqrt{-1}}
\tau(\log(u_1^*u_0\bar\psi(w)u_0^*u_1\bar\psi(w)^*)) \\
&=\frac{1}{2\pi\sqrt{-1}}\left(
\tau(\log(\bar\phi(w)^*u_0\bar\psi(w)u_0^*))
-\tau(\log(\bar\phi(w)^*u_1\bar\psi(w)u_1^*))\right) \\
&=\frac{1}{2\pi\sqrt{-1}}\left(
\tau(\log(\bar\phi(w'_0)^*u_0\bar\psi(w'_0)u_0^*))
-\tau(\log(\bar\phi(w'_1)^*u_1\bar\psi(w'_1)u_1^*))\right) \\
&=\tilde\eta_0([w'_0])(\tau)-\tilde\eta_1([w'_1])(\tau) \\
&=\tilde\eta_0([w])(\tau)-\tilde\eta_1([w])(\tau) \\
&=\eta([w])(\tau)-\eta([w])(\tau)=0
\end{align*}
for any $w\in L_1$ and $\tau\in T(A\otimes Q)$. 
Thus $\Ima\xi_1$ is contained in $\Ker D_{A\otimes Q}$. 
Since $\Ker D_{A\otimes Q}=(\Ker D_A)\otimes\Q$ is divisible, 
$\xi_1$ extends to a homomorphism $\xi_1:H'_1\to\Ker D_{A\otimes Q}$. 
Likewise $\xi_0$ extends to a homomorphism 
$\xi_0:H'_0\to K_1(A\otimes Q)=K_1(A)\otimes\Q$. 
By Lemma \ref{decomp}, 
we can find $\xi_{1,j}:H'_1\to\Ker D_{A\otimes B_j}$ 
such that $\xi_1=\xi_{1,1}-\xi_{1,0}$. 
We let $\tilde\xi_{1,j}:H'_1\otimes K_0(B_j)\to\Ker D_{A\otimes B_j}$ 
denote the homomorphism induced from $\xi_{1,j}$. 
In the same way, 
one obtains $\xi_{0,j}:H'_0\to K_1(A\otimes B_j)$ 
such that $\xi_0=\xi_{0,1}-\xi_{0,0}$. 
We let $\tilde\xi_{0,j}:H'_0\otimes K_0(B_j)\to K_1(A\otimes B_j)$ 
denote the homomorphism induced from $\xi_{0,j}$. 

In the same way as in the proof of Proposition \ref{AtimesZ}, 
for each $j=0,1$, 
we consider the following exact sequence of $C^*$-algebras: 
\[
\begin{CD}0@>>>C_j\otimes C_0(\T\setminus\{-1\})
@>\iota_j>>C_j\otimes C(\T)@>\pi_j>>C_j@>>>0, \end{CD}
\]
where $\pi_j$ is the evaluation at $-1\in\T$. 
We write $S=C_0(\T\setminus\{-1\})$ for short. 
Let $\rho_j:C_j\to C_j\otimes C(\T)$ be the homomorphism 
defined by $\rho_j(c)=c\otimes 1$. 
Then $\pi_j\circ\rho_j$ is the identity on $C_j$. 
This split exact sequence induces the isomorphism 
\[
(a,b)\mapsto KL(\rho_j)(a)+KL(\iota_j)(b)
\]
from $\underline{K}(C_j)\oplus\underline{K}(C_j\otimes S)$ 
to $\underline{K}(C_j\otimes C(\T))$. 
Let $\omega_{i,j}:K_i(C_j\otimes S)\to K_{1-i}(C_j)$ 
be the canonical isomorphism for each $i,j=0,1$. 
For each $j=0,1$, choose $\kappa_j\in KL(C_j\otimes S,A\otimes B_j)$ 
so that 
\[
K_i(\kappa_j)=\tilde\xi_{1-i,j}\circ K_{1-i}(\gamma_j)\circ\omega_{i,j}\quad 
\forall i=0,1. 
\]
Notice that the composition of $\tilde\xi_{1-i,j}$ and $K_{1-i}(\gamma_j)$ 
is well-defined, 
because $\Ima K_{1-i}(\gamma_j)$ is contained in $H'_{1-i}\otimes K_0(B_j)$. 
Define $\tilde\kappa_j\in KL(C_j\otimes C(\T),A\otimes B_j)$ by 
\[
\tilde\kappa_j\circ KL(\rho_j)
=KL(\bar\psi\circ\gamma_j)\quad\text{and}\quad
\tilde\kappa_j\circ KL(\iota_j)=\kappa_j. 
\]
Clearly $K_0(\tilde\kappa_j)$ is unital. 
Also for any $x\in K_0(C_j\otimes C(\T))_+\setminus\{0\}$, 
one has $K_0(\pi_j)(x)\in K_0(C_j)_+\setminus\{0\}$, and so 
$\tau(K_0(\bar\psi\circ\gamma_j\circ\pi_j)(x))>0$ 
for every $\tau\in T(A\otimes B_j)$. 
Since the image of $\tilde\xi_{1,j}$ is contained 
in the kernel of $D_{A\otimes B_j}$, 
we obtain 
\[
\tau(K_0(\tilde\kappa_j)(x))=\tau(K_0(\bar\psi\circ\gamma_j\circ\pi_j)(x))>0, 
\]
which entails $K_0(\tilde\kappa_j)(x)\in K_0(A\otimes B_j)_+\setminus\{0\}$. 
It thus follows that 
$K_0(\tilde\kappa_j)$ is unital and strictly positive, 
and hence $\tilde\kappa_j$ is in $KL(C_j\otimes C(\T),A\otimes B_j)_{+,1}$. 
Let $\tau_0\in T(C(\T))$ be the tracial state 
corresponding to the Haar measure on $\T$ and 
define the affine continuous map 
$\lambda_j:T(A\otimes B_j)\to T(C_j\otimes C(\T))$ 
by $\lambda_j(\tau)=T(\bar\psi\circ\gamma_j)(\tau)\otimes\tau_0$. 
For each minimal central projection $p\otimes1\in C_j\otimes C(\T)$ 
and $\tau\in T(A\otimes B_j)$, 
it is easy to verify 
\[
\tau(K_0(\tilde\kappa_j)([p\otimes1]))
=\tau(K_0(\bar\psi\circ\gamma_j)([\pi_j(p\otimes1)]))
=\tau(\bar\psi(\gamma_j(p)))
=\lambda_j(\tau)(p\otimes1). 
\]
Hence the hypotheses of Corollary \ref{NWexist2} are satisfied. 
Thanks to Corollary \ref{NWexist2}, 
there exists a unital monomorphism $\sigma_j:C_j\otimes C(\T)\to A\otimes B_j$ 
such that $KL(\sigma_j)=\tilde\kappa_j$ and $T(\sigma_j)=\lambda_j$. 
Since $KL(\sigma_j\circ\rho_j)=KL(\bar\psi\circ\gamma_j)$ and 
$T(\sigma_j\circ\rho_j)=T(\bar\psi\circ\gamma_j)$, 
Corollary \ref{rr0uniqueAH} implies that 
$\bar\psi\circ\gamma_j$ and $\sigma_j\circ\rho_j$ are 
approximately unitarily equivalent. 
Hence there exists a unitary $v_j\in A\otimes B_j$ such that 
\[
\lVert[\bar\psi(\gamma_j(c)),v_j]\rVert<\delta_3/12\quad 
\forall c\in G'_{3,j}
\]
and 
\begin{align*}
\Bott(\bar\psi\circ\gamma_j,v_j)(s)
&=(K_{1-i}(\sigma_j)\circ K_{1-i}(\iota_j)\circ\omega_{1-i,j}^{-1})([s]) \\
&=(K_{1-i}(\kappa_j)\circ\omega_{1-i,j}^{-1})([s]) \\
&=(\tilde\xi_{i,j}\circ K_i(\gamma_j)\circ\omega_{1-i,j}
\circ\omega_{1-i,j}^{-1})([s]) \\
&=\tilde\xi_{i,j}([\gamma_j(s)])
\end{align*}
for any $s\in L'_{i,j}$ and $i=0,1$. 
As before, 
\[
\lVert[\bar\psi(c),v_j]\rVert<\delta_3/4
\]
holds for any $c\in G_3\cup G_{3,0}\cup G_{3,1}$. 
By choosing $G'_{3,j}$ large enough and $\delta_3$ small enough in advance, 
we have $\Bott(\bar\psi,v_j)(s)=\Bott(\bar\psi\circ\gamma_j,v_j)(s'_j)$ 
for any $s\in L_i$ and $i=0,1$. 

It is easy to see that 
\[
\lVert\bar\phi(c)-u_jv_j\bar\psi(c)v_j^*u_j^*\rVert
<\delta_3/4+\delta_3/4=\delta_3/2
\]
holds for any $c\in G_3\cup G_{3,0}\cup G_{3,1}$. 
In particular one has 
\[
\lVert[\bar\psi(c),v_1^*u_1^*u_0v_0]\rVert<\delta_3\quad 
\forall c\in G_3\cup G_{3,0}\cup G_{3,1}, 
\]
and hence 
\[
\lVert[\bar\psi(c),v_1^*u_1^*u_0v_0]\rVert<\delta_2<\delta_1\quad 
\forall c\in G_1, 
\]
because $G_1$ is contained in $G_2$. 
Besides, 
when $G_3$ is sufficiently large and $\delta_3$ is sufficiently small, 
we get 
\begin{align*}
\Bott(\bar\psi,v_1^*u_1^*u_0v_0)([s])
&=\Bott(\bar\psi,v_1^*)([s])+\Bott(\bar\psi,u_1^*u_0)([s])
+\Bott(\bar\psi,v_0)([s]) \\
&=-\tilde\xi_{i,1}([s'_1])+\xi_i([s])+\tilde\xi_{i,0}([s'_0]) \\
&=-\xi_{i,1}([s])+\xi_i([s])+\xi_{i,0}([s])=0
\end{align*}
for any $s\in L_i$ and $i=0,1$, 
where we have used \cite[(e2.6)]{L09AJM}. 
Therefore, by Theorem \ref{Basic2}, 
we can find a path of unitaries $w:[0,1]\to A\otimes Q$ such that 
$w(0)=v_1^*u_1^*u_0v_0$, $w(1)=1$ and 
\[
\lVert[\bar\psi(c),w(t)]\rVert<\ep/2\quad \forall c\in F,\ t\in[0,1]. 
\]
Define a unitary $U\in Z$ by $U(t)=u_1v_1w(t)$. 
It is easy to see that 
\[
\lVert\phi(c)\otimes1-U(\psi(c)\otimes1)U^*\rVert<\ep/2+\delta_3/2<\ep
\]
holds for any $c\in F$. 
\end{proof}

\begin{rem}
In the theorem above, if $A$ has real rank zero, then 
the image of $D_A$ is dense in $\Aff(T(A))$. 
Hence the condition $\Ima\Theta_{\phi,\psi}\subset\overline{\Ima D_A}$ 
is trivially satisfied. 
\end{rem}

\end{document}